\documentclass[reqno]{amsart}
\usepackage{amsmath, amsthm, amssymb, color}
\usepackage{graphicx}
\usepackage{mathrsfs}
\usepackage{latexsym}
\usepackage{hyperref}
\usepackage{xcolor}
\textwidth=14.8cm \textheight=21.7cm \topmargin=0.0cm
\oddsidemargin=1.0cm \evensidemargin=1.0cm

\numberwithin{equation}{section}
\newtheorem{Theorem}{Theorem}[section]
\newtheorem{Lemma}[Theorem]{Lemma}

\theoremstyle{definition}

\newtheorem{remark}[Theorem]{Remark}
\newtheorem{Proposition}[Theorem]{Proposition}

\providecommand{\norm}[1]{\left\Vert#1\right\Vert}

\newcounter{RomanNumber}

\def\be{\begin{equation}}
\def\en{\end{equation}}
\def\bs{\begin{split}}
\def\es{\end{split}}

\allowdisplaybreaks[4]

\title[Global well--posedness and large time behavior
to a generic two---fluid model] {Global well--posedness and large
time behavior of classical solutions to a generic compressible
two--fluid model}
\author{Guochun Wu}
\address{Guochun Wu \newline Fujian Province University Key Laboratory of Computational Science, School of Mathematical Sciences, Huaqiao University, Quanzhou 362021, P.R. China.}
\email{guochunwu@126.com}
\author{Lei Yao}
\address{Lei Yao \newline School of Mathematics and Center for Nonlinear Studies, Northwest University, Xi'an 710127,P.R. China.}
\email{leiyao@nwu.edu.cn}
\author{Yinghui Zhang*}
\address{Yinghui Zhang \newline School of Mathematics and Statistics, Guangxi Normal University, Guilin, Guangxi 541004, P.R.
China} \email{yinghuizhang@mailbox.gxnu.edu.cn}

\subjclass[2010]{76T10;\, 76N10.}
\thanks{* Corresponding author: yinghuizhang@mailbox.gxnu.edu.cn}
\keywords{Two--fluid model;\, large time behavior;\, classical
solutions.}\bigbreak

\date{\today}
\usepackage{hyperref}

\begin{document}
\begin{abstract}
In this paper, we investigate a generic compressible two--fluid
model with common pressure ($P^+=P^-$) in $\mathbb{R}^3$. Under some
smallness assumptions, Evje--Wang--Wen [Arch Rational Mech Anal
221:1285--1316, 2016] obtained the global solution and its optimal
decay rate for the 3D compressible two--fluid model with unequal
pressures $P^+\neq P^-$. More precisely, the capillary pressure
$f(\alpha^-\rho^-)=P^+-P^-\neq 0$ is taken into account, and is
assumed to be a strictly decreasing function near the equilibrium.
As indicated by Evje--Wang--Wen, this assumption played an key role
in their analysis and appeared to have an essential stabilization
effect on the model. However, global well--posedness of the 3D
compressible two--fluid model with common pressure has been a
challenging open problem due to the fact that the system is
partially dissipative and its nonlinear structure is very terrible.
In the present work, by exploiting the dissipation structure of the
model and making full use of several key observations, we establish
global existence and large time behavior of classical solutions to
the 3D compressible two--fluid model with common pressure. One of
key observations here is that to closure the higher--order energy
estimates of non--dissipative variables (i.e, fraction densities
$\alpha_{\pm}\rho_\pm$), we will introduce the linear combination of
two velocities ($u^\pm$) :
$v=(2\mu^++\lambda^+)u^+-(2\mu^-+\lambda^-)u^-$ and explore its good
regularity, which is particularly better than ones of two velocities
themselves.

\end{abstract}

\maketitle

\section{\leftline {\bf{Introduction.}}}
\setcounter{equation}{0}
\subsection{Background and motivation}
It is well--known that in nature, most of the flows are multi--fluid
flows. Such a terminology includes the flows of non--miscible fluids
such as air and water; gas, oil and water. For the flows of miscible
fluids, they usually form a ``new" single fluid possessing its own
rheological properties. One interesting example is the stable
emulsion between oil and water which is a non--Newtonian fluid, but
oil and water are Newtonian ones.\par
 One of the classic examples of multi--fluid flows is
small amplitude waves propagating at the interface between air and
water, which is called a separated flow. In view of modeling, each
fluid obeys its own equation and couples with each other through the
free surface in this case. Here, the motion of the fluid is governed
by the pair of compressible Euler equations with free surface:
\begin{align}
\partial_{t} \rho_{i}+\nabla \cdot\left(\rho_{i} v_{i}\right) &=0, \quad i=1,2,\label{1.1} \\
\partial_{t}\left(\rho_{i} v_{i}\right)+\nabla \cdot\left(\rho_{i} v_{i} \otimes v_{i}\right)+\nabla p_i &=-g\rho_{i}
e_3\pm F_D.\label{1.2}
\end{align}
In above equations, $\rho_1$ and $v_1$ represent the density and
velocity of the upper fluid (air), and  $\rho_2$ and $v_2$ denote
the density and velocity of the lower fluid (water). $p_{i}$ denotes
the pressure. $-g\rho_{i} e_3$ is the gravitational force with the
constant $g>0$ the acceleration of gravity and $e_3$ the vertical
unit vector, and $F_D$ is the drag force. As mentioned before, the
two fluids (air and water) are separated by the unknown free surface
$z=\eta(x, y, t)$, which is advected with the fluids according to
the kinematic relation:
\begin{equation}\partial_t\eta=u_{1,z}-u_{1,x}\partial_x \eta-u_{1, y}\partial_y \eta\label{1.3}\end{equation}
on two sides of the surface $z=\eta$ and the pressure is continuous
across this surface.\par When the wave's amplitude becomes large
enough, wave breaking may happen. Then, in the region around the
interface between air and water, small droplets of liquid appear in
the gas, and bubbles of gas also appear in the liquid. These
inclusions might be quite small. Due to the appearances of collapse
and fragmentation, the topologies of the free surface become quite
complicated and a wide range of length scales are involved.
Therefore, we encounter the situation where two--fluid models become
relevant if not inevitable. The classic approach to simplify the
complexity of multi--phase flows and satisfy the engineer's need of
some modeling tools is the well--known volume--averaging method (see
\cite{Ishii1, Prosperetti} for details). Thus, by performing such a
procedure, one can derive a model without surface: a two--fluid
model. More precisely, we denote $\alpha^{\pm}$ by the volume
fraction of the liquid (water) and gas (air), respectively.
Therefore, $\alpha^++\alpha^-=1$. Applying the volume--averaging
procedure to the equations \eqref{1.1} and \eqref{1.2} leads to the
following generic compressible two--fluid model:
\begin{equation}\label{1.4}
\left\{\begin{array}{l}
\partial_{t}\left(\alpha^{\pm} \rho^{\pm}\right)+\operatorname{div}\left(\alpha^{\pm} \rho^{\pm} u^{\pm}\right)=0, \\
\partial_{t}\left(\alpha^{\pm} \rho^{\pm} u^{\pm}\right)+\operatorname{div}\left(\alpha^{\pm} \rho^{\pm} u^{\pm} \otimes u^{\pm}\right)
+\alpha^{\pm} \nabla P=-g\alpha^{\pm}\rho^{\pm} e_3\pm F_D,
\end{array}\right.
\end{equation}
where the two fluids are assumed to share the common pressure $P$.
\par
We have already discussed the case of water waves, where a separated
flow can lead to a two--fluid model from the viewpoint of practical
modeling. As mentioned before, two--fluid flows are very common in
nature, but also in various industry applications such as nuclear
power, chemical processing, oil and gas manufacturing. According to
the context, the models used for simulation may be very different.
However, averaged models share the same structure as \eqref{1.4}. By
introducing viscosity effects, one can generalize the above system
\eqref{1.4} to
\begin{equation}\label{1.5}
\left\{\begin{array}{l}
\partial_{t}\left(\alpha^{\pm} \rho^{\pm}\right)+\operatorname{div}\left(\alpha^{\pm} \rho^{\pm} u^{\pm}\right)=0, \\
\partial_{t}\left(\alpha^{\pm} \rho^{\pm} u^{\pm}\right)+\operatorname{div}\left(\alpha^{\pm} \rho^{\pm} u^{\pm} \otimes u^{\pm}\right)
+\alpha^{\pm} \nabla P=\operatorname{div}\left(\alpha^{\pm} \tau^{\pm}\right), \\
P=P^{\pm}\left(\rho^{\pm}\right)=A^{\pm}\left(\rho^{\pm}\right)^{\bar{\gamma}^{\pm}},
\end{array}\right.
\end{equation}
where $\rho^{\pm}(x, t) \geqq 0, u^{\pm}(x, t)$ and
$P^{\pm}\left(\rho^{\pm}\right)=A^{\pm}\left(\rho^{\pm}\right)^{\bar{\gamma}^{\pm}}$
denote the densities, the velocities of each phase, and the two
pressure functions, respectively. $\bar{\gamma}^{\pm} \geqq 1,
A^{\pm}>0$ are positive constants. In what follows, we set
$A^{+}=A^{-}=1$ without loss of any generality. Moreover,
$\tau^{\pm}$ are the viscous stress tensors
\begin{equation}\label{1.6}
\tau^{\pm}:=\mu^{\pm}\left(\nabla u^{\pm}+\nabla^{t}
u^{\pm}\right)+\lambda^{\pm} \operatorname{div} u^{\pm} \mathrm{Id},
\end{equation}
where the constants $\mu^{\pm}$ and $\lambda^{\pm}$ are shear and
bulk viscosity coefficients satisfying the physical condition:
$\mu^{\pm}>0$ and $2 \mu^{\pm}+3 \lambda^{\pm} \geqq 0,$ which
implies that $\mu^{\pm}+\lambda^{\pm}>0.$ For more information about
this model, we refer to \cite{Bear, Brennen1, Bresch1, Bresch2,
Evje1, Evje2, Evje3, Evje4, Evje8, Evje9, Friis1, Raja, Vasseur,
Yao2, Zhang4} and references therein. 
However, it is well--known
that as far as mathematical analysis of two--fluid model is
concerned, there are many technical challenges. Some of them
involve, for example:
\begin{itemize}
\item The two--fluid model is a partially dissipative system.
More precisely, there is no dissipation on the mass conservation
equations, whereas the momentum equations have viscosity
dissipations;

\item The corresponding linear system of the model has
zero eigenvalue, which makes mathematical analysis (well--posedness
and stability) of the model become quite difficult and complicated;

\item Transition to single--phase regions, i.e, regions where the mass
$\alpha^{+} \rho^{+}$ or $\alpha^{-} \rho^{-}$ becomes zero, may
occur when the volume fractions $\alpha^{\pm}$ or the densities
$\rho^{\pm}$ become zero;

\item The system is non--conservative, since the non--conservative terms $\alpha^{\pm} \nabla
P^{\pm}$ are involved in the momentum equations. This brings various
 mathematical difficulties for us to employ methods used
for single phase models to the two--fluid model.

\end{itemize}\par
In the excellent work \cite{Evje9}, Evje--Wang--Wen investigated the
two--fluid model \eqref{1.5} with unequal pressures. As a matter of
fact, they made the following assumptions on pressures:
\begin{equation}P^+(\rho^+)-P^-(\rho^-)=(\rho^+)^{\bar{\gamma}^+}-(\rho^-)^{\bar{\gamma}^-}=f(\alpha_-\rho_-),\label{1.7}
\end{equation}
where $f$ is so--called capillary pressure which belongs to $C^3([0,
\infty))$, and is a strictly decreasing function near the
equilibrium satisfying
\begin{equation}-\frac{s_{-}^{2}(1,1)}{\alpha^{-}(1,1)}<f^{\prime}(1)<\frac{\bar{\eta}-s_{-}^{2}(1,1)}{\alpha^{-}(1,1)}<0,\label{1.8}
\end{equation}
where $\bar{\eta}$ is a positive, small fixed constant, and
$s_{\pm}^{2}:=\frac{\mathrm{d} P^{\pm}}{\mathrm{d}
\rho^{\pm}}\left(\rho^{\pm}\right)=\bar{\gamma}^{\pm}
\frac{P^{\pm}\left(\rho^{\pm}\right)}{\rho^{\pm}}$ represent the
sound speed of each phase respectively. Under the assumptions
\eqref{1.7} and \eqref{1.8} on pressures, they obtained global
existence and decay rates of the solutions when the initial
perturbation is sufficiently small. However, as indicated by
Evje--Wang--Wen in \cite{Evje9}, assumptions \eqref{1.7} and
\eqref{1.8} played an key role in their analysis and appeared to
have an essential stabilization effect on the model in question. On
the other hand, Bresch et al. in the seminal work \cite{Bresch1}
considered a model similar to \eqref{1.5}. More specifically, they
made the following assumptions:
\begin{itemize}
\item inclusion of viscous terms of the form \eqref{1.6} where $\mu^{\pm}(\rho)=\mu_{\pm}\rho^{\pm}$
and $\lambda^{\pm}(\rho^\pm)=0$;

\item inclusion of a third order derivative of $\alpha^{\pm}
\rho^{\pm}$, which are so--called  internal capillary forces
represented by the well--known Korteweg model on each phase.
\end{itemize}
They obtained the global weak solutions in the periodic domain with
$1<\overline{\gamma}^{\pm}< 6$. It is worth mentioning that as
indicated by Bresch et al. in \cite{Bresch1}, their method cannot
handle the case without the internal capillary forces. Later,
Bresch--Huang--Li \cite{Bresch2} established the global existence of
weak solutions in one space dimension without the internal capillary
forces when $\overline{\gamma}^{\pm}>1$ by taking advantage of the
one space dimension. However, the methods of Bresch--Huang--Li
\cite{Bresch2} relied crucially on the advantage of one space
dimension, and particularly cannot be applied for high dimensional
problem. Recently, Cui--Wang--Yao--Zhu \cite{c1} obtained the
time--decay rates of classical solutions for the three--dimensional
Cauchy problem by combining detailed analysis of the Green's
function to the linearized system with delicate energy estimates to
the nonlinear system. It should be remarked that the internal
capillary forces played an essential role in the analysis of
\cite{Bresch1, c1}, which will be explained later.
 \par
  In conclusion, all the works \cite{Bresch1, c1, Evje9} depend essentially on the internal capillary
  forces effects or the capillary pressure effects. Therefore, a natural and
important problem is that what will happen if no internal capillary
force is involved and the capillary pressure $f=0$. That is to say,
what about the global well--posedness and large time behavior of
Cauchy problem to the two--fluid model \eqref{1.5} in high
dimensions. However, to our best knowledge, so far there is no
result on mathematical theory of the two--fluid model \eqref{1.5} in
high dimensions. In particular, global well--posedness of the 3D
compressible two--fluid model \eqref{1.5} has been a challenging
open problem due to the fact that the system is partially
dissipative and its nonlinear structure is very terrible. The main
purpose of this work is to resolve this problem. More precisely, by
exploiting the dissipation structure of the model and making full
use of several key observations, we establish global existence and
large time behavior of classical solutions to the 3D compressible
two--fluid model \eqref{1.5}. One of key observations here is that
to closure the higher--order energy estimates of non--dissipative
variables (i.e, fraction densities $\alpha_{\pm}\rho_\pm$), we will
introduce the linear combination of two velocities ($u^\pm$) :
$v=(2\mu^++\lambda^+)u^+-(2\mu^-+\lambda^-)u^-$ and explore its good
regularity, which is particularly better than ones of two velocities
themselves. Particularly, our results show that even if both the
capillary pressure effects and internal capillary forces effects are
absence, viscosity forces still may prevent the formation of
singularities for the model in question. Furthermore, the components
of the solution exhibit totally distinctive behaviors: the
dissipation variables have decay rates in time, but the
non--dissipation variables only have uniform time--independent
bounds. This phenomenon is totally new as compared to \cite{c1,
Evje9} where all the components of the solution show the same
behaviors (i.e, have decay rates in time), and is the most important
difference between partially dissipative system and dissipative
system.

\subsection{New formulation of system \eqref{1.5} and Main Results}
In this subsection, we devote ourselves to reformulating the system
\eqref{1.5} and stating the main results. To begin with, noticing
the relation between the pressures of \eqref{1.5}$_3$, one has
\begin{equation}\label{1.9}
\mathrm{d} P=s_{+}^{2} \mathrm{d} \rho^{+} =s_{-}^{2} \mathrm{d} \rho^{-},
\end{equation}
 where $s_{\pm}^{2}:=\frac{\mathrm{d} P^{\pm}}{\mathrm{d}
\rho^{\pm}}\left(\rho^{\pm}\right)=\bar{\gamma}^{\pm}
\frac{P^{\pm}\left(\rho^{\pm}\right)}{\rho^{\pm}}$ represent the
sound speed of each phase respectively. As in \cite{Bresch1}, we
introduce the fraction densities
\begin{equation}\label{1.10}
R^{\pm}=\alpha^{\pm} \rho^{\pm},
\end{equation}
which together with the relation: $\alpha^++\alpha^-=1$ gives
\begin{equation}\label{1.11}
\mathrm{d} \rho^{+}=\frac{1}{\alpha_{+}}\left(\mathrm{d}
R^{+}-\rho^{+} \mathrm{d} \alpha^{+}\right), \quad \mathrm{d}
\rho^{-}=\frac{1}{\alpha_{-}}\left(\mathrm{d} R^{-}+\rho^{-}
\mathrm{d} \alpha^{+}\right).
\end{equation}
By virtue of \eqref{1.9} and \eqref{1.10}, we finally get
\begin{equation}\label{1.12}
\mathrm{d} \alpha^{+}=\frac{\alpha^{-} s_{+}^{2}}{\alpha^{-}
\rho^{+} s_{+}^{2}+\alpha^{+} \rho^{-} s_{-}^{2}} \mathrm{d}
R^{+}-\frac{\alpha^{+} s_{-}^{2}}{\alpha^{-} \rho^{+}
s_{+}^{2}+\alpha^{+} \rho^{-}
s_{-}^{2}}
\mathrm{d} R^{-}. \end{equation}
 Substituting \eqref{1.12} into \eqref{1.11}, we deduce
the following expressions:
\[
\mathrm{d} \rho^{+}=\frac{\rho^{+} \rho^{-}
s_{-}^{2}}{R^{-}\left(\rho^{+}\right)^{2}
s_{+}^{2}+R^{+}\left(\rho^{-}\right)^{2} s_{-}^{2}}\left(\rho^{-}
\mathrm{d} R^{+}+\rho^{+}\mathrm{d} R^{-}\right),
\]
and
\[
\mathrm{d} \rho^{-}=\frac{\rho^{+} \rho^{-}
s_{+}^{2}}{R^{-}\left(\rho^{+}\right)^{2}
s_{+}^{2}+R^{+}\left(\rho^{-}\right)^{2} s_{-}^{2}}\left(\rho^{-}
\mathrm{d} R^{+}+\rho^{+} \mathrm{d} R^{-}\right),
\]
which together with \eqref{1.9} give the common pressure
differential $\mathrm{d} P$
\[
\mathrm{d} P=\mathcal{C}\left(\rho^{-} \mathrm{d}
R^{+}+\rho^{+} \mathrm{d} R^{-}\right) ,\]  where
\[
\mathcal{C}:=\frac{s_{-}^{2} s_{+}^{2}}{\alpha^{-} \rho^{+}
s_{+}^{2}+\alpha^{+} \rho^{-} s_{-}^{2}}.\]

Next, by noting the fundamental relation: $\alpha^++\alpha^-=1$, we
can get the following equality:
\begin{equation}\label{1.13}
\frac{R^+}{\rho^+}+\frac{R^-}{\rho^-}=1, ~~\hbox{and thus}~~
R^-=\frac{\rho^-\left(\rho^+-R^+\right)}{\rho^+}=\frac{P^{1/\gamma^-}\left(P^{1/\gamma^+}-R^+\right)}{P^{1/\gamma^+}}.\end{equation}

\noindent  By virtue of $\eqref{1.5}_3$, \eqref{1.10} and
\eqref{1.13}, $\alpha^{\pm}$ can be defined by
\begin{equation}\label{1.14}
\left\{\begin{array}{l}
\alpha^{+}\left(P,R^{+}\right) =\frac{R^{+}}{P^{1/\gamma^+}}, \\
\alpha^{-}\left(P,R^{+}\right)
=1-\frac{R^{+}}{P^{1/\gamma^+}}.
\end{array}\right.
\end{equation}
We refer the readers to [\cite{Bresch1}, P. 614] for more details.
\par
As already stated, the model \eqref{1.5} is a partially dissipative
system, which brings various difficulties for our studies on its
mathematical properties. Therefore, to tackle with this difficulty,
we need to explore new dissipative variable by making full use of
the structure of the model \eqref{1.5}. One key observation in this
paper is that the common pressure $P$ is a dissipative variable.
With this crucial observation, the system \eqref{1.5} can be
rewritten in terms of the variables $(R^+, P, u^+, u^- )$:
\begin{equation}\label{1.15}
\left\{\begin{array}{l}
\partial_{t} R^{+}+\operatorname{div}\left(R^{+} u^{+}\right)=0, \\
\partial_{t} P+\mathcal C\rho^-\operatorname{div}\left(R^{+} u^{+}\right)+\mathcal C\rho^+\operatorname{div}\left(R^{-} u^{-}\right)=0,\\
\partial_{t}\left(R^{+} u^{+}\right)+\operatorname{div}\left(R^{+} u^{+} \otimes u^{+}\right)+\alpha^{+} \nabla P \\
\hspace{2.5cm}=\operatorname{div}\left\{\alpha^{+}\left[\mu^{+}\left(\nabla u^{+}+\nabla^{t} u^{+}\right)
+\lambda^{+} \operatorname{div} u^{+} \operatorname{Id}\right]\right\}, \\
\partial_{t}\left(R^{-} u^{-}\right)+\operatorname{div}\left(R^{-} u^{-} \otimes u^{-}\right)+\alpha^{-} \nabla P \\
\hspace{2.5cm}=\operatorname{div}\left\{\alpha^{-}\left[\mu^{-}\left(\nabla
u^{-}+\nabla^{t} u^{-}\right)+\lambda^{-} \operatorname{div} u^{-}
\operatorname{Id}\right]\right\}.
\end{array}\right.
\end{equation}
In the present paper, we consider the initial value problem to
\eqref{1.15} in the whole space $\mathbb R^3$ with the initial data
\begin{equation}\label{1.16} (R^{+}, P, u^{+}, u^{-})(x,
0)=(R_{0}^{+}, P_{0}, u_{0}^{+},  u_{0}^{-})(x)\rightarrow(\bar
R^{+}, \bar P, \overrightarrow{0},  \overrightarrow{0}) \quad
\hbox{as}\quad |x|\rightarrow\infty \in \mathbb{R}^{3},
\end{equation}
where two positive constants  ${\bar{R}}^{+}$ and $\bar{P}$
represent the background doping profile, and for simplicity, we take
${\bar{R}}^{\pm}$ as 1, and thus $\bar{P}$ is determined by the
relation: $R^-(\bar{P}, 1)-1=0$.
\medskip
\par
Now, we are in a position to state our main result.
\smallskip
\begin{Theorem}\label{1mainth}Assume that $R_{0}^{+}-1\in H^3(\mathbb{R}^3),P_0-\bar{P}, u_{0}^{+},
u_{0}^{-}\in H^3(\mathbb{R}^3)\cap L^1(\mathbb{R}^3)$, then
there exists a constant $\delta_0$ such that if
\begin{equation}\label{1.17}K_0:=\left\|\left(R_{0}^{+}-1\right)\right\|_{H^3}+
\left\|\left( P_0-\bar{P},u_{0}^{+},
u_{0}^{-}\right)\right\|_{H^{3}\cap L^1} \leq \delta_0,
\end{equation}
then the Cauchy problem \eqref{1.15}--\eqref{1.16} admits a unique
solution $\left(R^{+}, P,u^{+}, u^{-}\right)$ globally in time,
satisfying
\[
\begin{array}{l}
R^{+}-1, P-\bar{P} \in C^{0}\left([0, \infty) ;
H^{3}\left(\mathbb{R}^{3}\right)\right) \cap C^{1}\left([0, \infty)
;
H^{2}\left(\mathbb{R}^{3}\right)\right), \\
u^{+}, u^{-} \in C^{0}\left([0, \infty) ;
H^{3}\left(\mathbb{R}^{3}\right)\right) \cap C^{1}\left([0, \infty)
; H^1\left(\mathbb{R}^{3}\right)\right).
\end{array}
\]
Moreover, for any $t\ge 0$, there exists a positive constant $C_0$
independent of $t$ such that the solution $\left(R^{+}, P,u^{+},
u^{-}\right)$ satisfies the following estimates:
\begin{equation}\label{1.18}\left\|\left( P-\bar{P},u^{+},
u^{-}\right)(t)\right\|_{L^2}\le C_0K_0(1+t)^{-\frac{3}{4}},
\end{equation}
\begin{equation}\label{1.19}\left\|\nabla P(t)\right\|_{H^1}+\left\|\nabla\left( u^{+},
u^{-}\right)(t)\right\|_{H^2}\le C_0K_0(1+t)^{-\frac{5}{4}}.
\end{equation}
and
\begin{equation}\label{1.20}\|\nabla^3P(t)\|_{L^2}+\|(R^+-1,R^--1)(t)\|_{H^3}\le C_0K_0.
\end{equation}
\end{Theorem}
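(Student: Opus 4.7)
The plan is a continuation argument: standard local existence in $H^3$ for \eqref{1.15}--\eqref{1.16}, via Picard iteration on the Friedrichs--symmetrizable hyperbolic--parabolic structure, reduces the theorem to uniform-in-time a priori $H^3$ bounds plus the decay rates, all established under the small--perturbation bootstrap hypothesis $\|(R^+-1,P-\bar P,u^+,u^-)(t)\|_{H^3}\le\delta\ll 1$ on $[0,T]$. First I would derive an energy--dissipation inequality for the dissipative block $(P-\bar P,u^+,u^-)$. Symmetrizing the second equation of \eqref{1.15} by dividing through by $\mathcal{C}\bar\rho^+\bar\rho^-$ and combining with the momentum equations, energy estimates on $\nabla^k$ for $k=0,1,2,3$ produce viscous dissipation on $\nabla u^\pm$ up to order three; to recover the missing dissipation for $\nabla P$, I would test the momentum equations against $\nabla P$ in Matsumura--Nishida fashion, so that the non--conservative term $\alpha^\pm\nabla P$ becomes a coercive $|\nabla P|^2$ contribution modulo $\|\nabla^2 u^\pm\|$. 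Under the bootstrap, nonlinear remainders absorb into the dissipation, yielding
\[
\frac{d}{dt}\mathcal E(t)+\mathcal D(t)\lesssim \delta\,\mathcal D(t)+(\text{top-order $R^+$ terms}),
\]
with $\mathcal E\sim\|(P-\bar P,u^+,u^-)\|_{H^3}^2$ and $\mathcal D\sim\|\nabla P\|_{H^2}^2+\|\nabla u^\pm\|_{H^3}^2$.

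The genuine difficulty is the top--order estimate for the transport variable $R^+-1$. Applying $\nabla^3$ to $\partial_t R^++\mathrm{div}(R^+u^+)=0$ and pairing with $\nabla^3(R^+-1)$, the only uncontrolled contribution is $\int R^+\nabla^3(R^+-1)\cdot\nabla^3\mathrm{div}\,u^+$, which naively demands $\|\nabla^4 u^+\|_{L^2}$ and destroys the regularity budget. This is where I would invoke the key variable $v=(2\mu^++\lambda^+)u^+-(2\mu^-+\lambda^-)u^-$ introduced in the abstract. Combining the two momentum equations so that the non--conservative terms $\alpha^\pm\nabla P$ cancel at equilibrium, one obtains an effective--viscous--flux identity of the form $(2\mu^++\lambda^+)(2\mu^-+\lambda^-)\Delta\,\mathrm{div}\,v\approx(\text{mixed nonlinear terms})$, which shows that $\mathrm{div}\,v$ enjoys one extra derivative of spatial regularity compared with $\mathrm{div}\,u^\pm$ individually, gained from the pressure gradient through elliptic regularity. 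Rewriting $\nabla^3\mathrm{div}\,u^+$ as a linear combination of $\nabla^3\mathrm{div}\,v$ and $\nabla^3\mathrm{div}\,u^-$, using \eqref{1.14} to express $R^-$ in terms of $(R^+,P)$ and performing the symmetric analysis for $R^-$, the bad top-order contribution is absorbed into $\mathcal D$ plus $\delta$-small nonlinear errors, which closes the uniform $H^3$ bound claimed in \eqref{1.20}.

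With the uniform $H^3$ bound in hand, the decay rates \eqref{1.18}--\eqref{1.19} follow from a linear--nonlinear decomposition. Spectral/Fourier analysis of the linearized system for $(P-\bar P,u^+,u^-)$, with $R^+-1$ entering as a weakly coupled low--frequency source, reveals heat--kernel type $L^1$--$L^2$ semigroup decay, giving $(1+t)^{-3/4}$ in $L^2$ and $(1+t)^{-5/4}$ after one spatial derivative. Applying Duhamel's formula with the time--weighted norm
\[
M(t)=\sup_{0\le s\le t}\Bigl\{(1+s)^{3/4}\|(P-\bar P,u^+,u^-)(s)\|_{L^2}+(1+s)^{5/4}\bigl(\|\nabla P(s)\|_{H^1}+\|\nabla(u^+,u^-)(s)\|_{H^2}\bigr)\Bigr\},
\]
and estimating nonlinear terms via the already established uniform $H^3$ bound together with Sobolev interpolation, the bootstrap inequality $M(t)\le CK_0+C M(t)^2$ closes provided $\delta_0$ is small. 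Since $R^+-1$ never decays, its coupling to $P$ at the highest order explains why $\nabla^3 P$ only inherits the uniform bound in \eqref{1.20}, not the $(1+t)^{-7/4}$ rate one might naively expect.

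The main obstacle is a faithful nonlinear implementation of the $v$-trick: the cancellation of $\nabla P$ that grants $\mathrm{div}\,v$ its extra derivative of regularity is transparent only in the linearization, while at the fully nonlinear level one must carefully manage the commutators $[\alpha^\pm,\nabla^k]$ and the cross terms generated by the algebraic closure \eqref{1.14}. Pushing this calculation through, so that the effective--viscous--flux identity yields a genuine $H^3$ bound on $\mathrm{div}\,v$ that really absorbs the top-order term in the $R^+$-estimate, is what unlocks the whole argument; every other step is either classical Matsumura--Nishida--style energy bookkeeping or standard Duhamel decay analysis.
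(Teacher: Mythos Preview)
Your architecture is right and you have identified both the real obstruction and the remedy $v=(2\mu^++\lambda^+)u^+-(2\mu^-+\lambda^-)u^-$, but two specific points prevent the scheme from closing. First, the effective--viscous--flux identity is not $\Delta\,\mathrm{div}\,v\approx(\text{nonlinear terms})$: subtracting the momentum equations in \eqref{3.36} leaves the \emph{linear} contribution $\mathrm{div}(\rho^+u^+_t-\rho^-u^-_t)$ on the right (see \eqref{3.40}), so that $\|\nabla^3\mathrm{div}\,v\|_{L^2}\lesssim\|\nabla u^\pm_t\|_{H^1}+\|\nabla u^\pm\|_{H^2}$. This appears \emph{linearly} on the right of the $\nabla^3 n^\pm$ inequality and cannot be absorbed into your $\mathcal D$; it must be integrated in time. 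The paper therefore reverses your order: it first proves the decay of $(\theta,u^\pm)$ (Theorem~\ref{2mainth}), including an auxiliary energy for $(\theta_t,u^\pm_t,\nabla u^\pm_t)$ and an $\|u^\pm_{tt}\|_{L^2}$ bound (Lemma~\ref{Lemma3.3}) which, combined with the weighted estimate \eqref{3.23}, gives $\int_0^t\|u^\pm_{tt}\|_{L^2}\,d\tau\le CK_0$; only then is $\|\nabla^3 n^\pm\|_{L^2}$ closed (Lemma~\ref{Lemma3.5}). Relatedly, a direct $\nabla^3\theta$ energy estimate---implicit in your choice $\mathcal E\sim\|(\theta,u^\pm)\|_{H^3}^2$---fails because of the term $\int\mathcal C\rho^\mp u^\pm\!\cdot\!\nabla\nabla^3 n^\pm\,\nabla^3\theta\,dx$; the paper substitutes the time--derivative energies for the $\nabla^3\theta$ one and recovers $\|\nabla^3\theta\|_{L^2}$ only a posteriori as a bounded (not decaying) quantity.

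Second, your Duhamel step is too coarse to reach $(1+t)^{-5/4}$ for $\|\nabla(\theta,u^\pm)\|_{L^2}$. The worst source in $F_1$ is $\mathcal C\rho^- u^+\!\cdot\!\nabla n^++\mathcal C\rho^+ u^-\!\cdot\!\nabla n^-$; since $n^\pm$ carries no decay, the naive $L^1$ bound $\|u^\pm\|_{L^2}\|\nabla n^\pm\|_{L^2}\lesssim\delta(1+\tau)^{-3/4}$ feeds through the semigroup to only $(1+t)^{-1}$, and $M(t)$ does not close. The paper's remedy (identity \eqref{3.6-1}) is an exact rewriting of this pair as $\mathrm{div}\,F_{11}$ plus terms each carrying a $\nabla u^\pm$ or $\nabla\theta$ factor; the divergence structure shifts one derivative onto the semigroup, and the remaining terms already decay at $(1+\tau)^{-5/4}$, restoring the sharp rate as in \eqref{3.31}. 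Without this algebraic observation your bootstrap on $M(t)$ cannot close at the stated rate.
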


\begin{remark} \eqref{1.18}--\eqref{1.20} show that the components of the solution exhibit
totally distinctive behaviors. More precisely, the dissipation
variables $(P, u^+, u^-)$ have decay rates in time, but the
non--dissipation variables $(R^+, R^-)$ only have uniform
time--independent bound. This phenomenon is totally new as compared
to \cite{c1, Evje9} where all the components of the solution show
the same behaviors (i.e, have decay rates in time), and is the most
important difference between partially dissipative system and
dissipative system.
\end{remark}
\begin{remark} Noticing the lower bound on linear $L^2$ decay rates in Proposition \ref{Prop2.6}, we can
employ Duhamel principle and uniform time--independent energy
estimates on the nonlinear terms to prove $\left\|\left(
P-\bar{P},u^{+}, u^{-}\right)(t)\right\|_{L^2}\geq
C(1+t)^{-\frac{3}{4}}$, where the rate is the same as the upper
decay rate in \eqref{1.18}. Therefore, our $L^2$ decay rate on $(
P-\bar{P},u^{+}, u^{-})$ is optimal in this sense. However, this is
not our main concern. We will focus our attention on global
well--posedness of the solutions and thus omit the details for the
sake of simplicity.
\end{remark}

\begin{remark} Our methods can be applied to study bounded domain problem to the 3D compressible
two--fluid model \eqref{1.5}. This will be reported in our
forthcoming work \cite{WYZ}.
\end{remark}

\smallskip
\smallskip

\indent Now, let us illustrate the main difficulties encountered in
proving Theorem \ref{1mainth} and explain our strategies to overcome
them. Different from the models in \cite{Evje9, c1}, where either
the capillary pressure effects or the internal capillary forces
effects are involved, we consider the two--fluid model \eqref{1.5}
with common pressure $P^+=P^-$ and no capillary forces effects being
involved. Therefore, the model in the present paper is a partially
dissipative system which brings many essential difficulties. We will
develop new ideas to overcome these difficulties as explained
below.\par
 To begin with, we give a heuristic description of the significant difference between model \eqref{1.5} and those in  \cite{Evje9, c1}.
By taking $n^{\pm}=R^{\pm}-1$, one can write the corresponding
linear system of the model \eqref{1.5} in terms of the variables
$(n^+, u^+, n^-, u^-)$:
\begin{equation}\label{1.21}\left\{\begin{array}{l}
\partial_{t} n^{+}+\operatorname{div} u^{+}=0, \\
\partial_{t} u^{+}+\bar{\beta}_{1} \nabla n^{+}+\bar{\beta}_{2} \nabla n^{-}-\nu_{1}^{+} \Delta u^{+}-
\nu_{2}^{+} \nabla \operatorname{div} u^{+}=0, \\
\partial_{t} n^{-}+\operatorname{div} u^{-}=0, \\
\partial_{t} u^{-}+\bar{\beta}_{3} \nabla n^{+}+\bar{\beta}_{4} \nabla n^{-}-\nu_{1}^{-} \Delta u^{-}-
\nu_{2}^{-} \nabla \operatorname{div} u^{-}=0,
\end{array}\right.\end{equation}
where $\bar{\beta}_{1}=\frac{\mathcal{C}(1,1)
\rho^{-}(1,1)}{\rho^{+}(1,1)}$,
$\bar{\beta}_{2}=\bar{\beta}_{3}=\mathcal{C}(1,1)$,
$\bar{\beta}_{4}=\frac{\mathcal{C}(1,1)
\rho^{+}(1,1)}{\rho^{-}(1,1)}$,
$\nu_{1}^{\pm}=\frac{\mu^{\pm}}{\rho^{\pm}(1,1)}$, and
$\nu_{2}^{\pm}=\frac{\mu^{\pm}+\lambda^{\pm}}{\rho^{\pm}(1,1)}>0$.
Multiplying $\eqref{1.21}_1$, $\eqref{1.21}_2$, $\eqref{1.21}_3$ and
$\eqref{1.21}_4$ by $\frac{\bar{\beta}_1}{\bar{\beta}_2}n^+$,
$\frac{1}{\bar{\beta}_2}u^+$,
$\frac{\bar{\beta}_4}{\bar{\beta}_3}n^-$ and
$\frac{1}{\bar{\beta}_3}u^-$, one can easily get the nature energy
equation of the linear system \eqref{1.21}:
\begin{align}\begin{split}\label{1.22}
\partial_t\mathcal{E}_0(t)+\mathcal{D}_0(t)&:=\displaystyle\partial_t\int_{\mathbb{R}^3}\left(
\frac{\bar{\beta}_1}{2\bar{\beta}_2}\left|n^+\right|^2+\frac{\bar{\beta}_4}{2\bar{\beta}_3}\left|n^-\right|^2+n^+n^-+
\frac{1}{2\bar{\beta}_2}\left|u^+\right|^2+\frac{1}{2\bar{\beta}_3}\left|u^-\right|^2\right)\textrm{d}x\\
&\quad+\displaystyle\int_{\mathbb{R}^3}\frac{1}{\bar{\beta}_2}\left(
\nu_1^+\left|\nabla
u^+\right|^2+\nu_2^+\left|\hbox{div}u^+\right|^2\right)+\frac{1}{\bar{\beta}_3}\left(
\nu_1^-\left|\nabla
u^-\right|^2+\nu_2^-\left|\hbox{div}u^-\right|^2\right)\textrm{d}x=0,
\end{split}\end{align}
where $\mathcal{E}_0(t)$ and $\mathcal{D}_0(t)$  denote the nature
energy and dissipation, respectively. Noticing the fact that
$\bar{\beta}_1\bar{\beta}_4=\bar{\beta}_2\bar{\beta}_3=\bar{\beta}_3^2$,
it is clear that
\begin{equation}\displaystyle\mathcal{E}_0(t)=\frac{1}{2}\int_{\mathbb{R}^3}\left(
\left(\sqrt{\frac{\bar{\beta}_1}{\bar{\beta}_2}}n^++\sqrt{\frac{\bar{\beta}_4}{\bar{\beta}_3}}n^-\right)^2
+\frac{1}{\bar{\beta}_2}\left|u^+\right|^2+\frac{1}{\bar{\beta}_3}\left|u^-\right|^2\right)\textrm{d}x.\nonumber\end{equation}
This together with the energy equation \eqref{1.22} makes it
impossible for us to get the uniform energy estimates of the
non--dissipative variables $n^+$ and $n^-$ simultaneously, even
though in the linear level, but possibly the uniform energy estimate
of their linear combination
$\sqrt{\frac{\bar{\beta}_1}{\bar{\beta}_2}}n^++\sqrt{\frac{\bar{\beta}_4}{\bar{\beta}_3}}n^-$.
On the other hand, Evje--Wang--Wen \cite{Evje9} considered the
capillary pressure $f=P^+-P^-\neq 0$ which is a strictly decreasing
function near the equilibrium and satisfies assumption \eqref{1.8}.
Similarly, we can get the same nature energy equation \eqref{1.22}
except replacing the expressions of $\bar{\beta}_2$ and
$\bar{\beta}_4$ by
$\bar{\beta}_{2}=\mathcal{C}(1,1)+\frac{\mathcal{C}(1,1)
\alpha^-(1,1)f'(1)}{{s^2_{-}}(1,1)}$ and
$\bar{\beta}_{4}=\frac{\mathcal{C}(1,1)
\rho^{+}(1,1)}{\rho^{-}(1,1)}-\frac{\mathcal{C}(1,1)
\alpha^+(1,1)f'(1)}{{s^2_{+}}(1,1)}$. Then, we can rewrite the
energy $\mathcal{E}_0(t)$ into:
\begin{align*}\mathcal{E}_0(t)&=\displaystyle\frac{1}{2}\int_{\mathbb{R}^3}\left(
\left(\sqrt{\frac{\bar{\beta}_1}{\bar{\beta}_2}}n^++\sqrt{\frac{\bar{\beta}_2}{\bar{\beta}_1}}n^-\right)^2
+\left(\frac{\bar{\beta}_4}{\bar{\beta}_3}-\frac{\bar{\beta}_2}{\bar{\beta}_1}\right)\left|n^-\right|^2
+\frac{1}{\bar{\beta}_2}\left|u^+\right|^2+\frac{1}{\bar{\beta}_3}\left|u^-\right|^2\right)\mathrm{d}x\\
&=\displaystyle\frac{1}{2}\int_{\mathbb{R}^3}\left(
\left(\sqrt{\frac{\bar{\beta}_1}{\bar{\beta}_2}}n^++\sqrt{\frac{\bar{\beta}_2}{\bar{\beta}_1}}n^-\right)^2
-\frac{\mathcal{C}^2(1,1)f'(1)}{\bar{\beta}_1\bar{\beta}_3\rho^+(1)}\left|n^-\right|^2
+\frac{1}{\bar{\beta}_2}\left|u^+\right|^2+\frac{1}{\bar{\beta}_3}\left|u^-\right|^2\right)\mathrm{d}x,
\end{align*}
which together with the key assumption \eqref{1.8} implies that
Evje--Wang--Wen \cite{Evje9} can get the uniform energy estimates of
$n^+$ and $n^-$ simultaneously, at least in the linear level. As for
Cui--Wang--Yao--Zhu \cite{c1} where the internal capillarity effects
in terms of a third--order derivative of fraction density
($\alpha^\pm\rho^\pm$) are involved in the momentum equations, one
can get the similar nature energy equation \eqref{1.22} except
adding the term
$\displaystyle\int_{\mathbb{R}^3}\frac{\sigma^+}{\bar{\beta}_2}|\nabla
n^+|^2+\frac{\sigma^-}{\bar{\beta}_3}|\nabla n^-|^2\mathrm{d}x$ into
the energy $\mathcal{E}_0(t)$ with the capillary coefficients
$\sigma^{\pm}>0$. This again enables Cui--Wang--Yao--Zhu \cite{c1}
to obtain the uniform energy estimates of $n^+$ and $n^-$, at least
in the linear level. Therefore, the methods of \cite{Evje9, c1}
relying heavily
 on the capillary pressure effects or the internal capillary forces
effects are invalid in our problem. As already stated, for our
studies on the partially dissipative system \eqref{1.5}, it is
essential to explore some new potential dissipative variable by
fully using the specific structure of the system, and divide the
analysis into two parts, one for the dissipative variables and
another one for the non--dissipative variables. As mentioned above,
the linear combination:
$\displaystyle\sqrt{\frac{\bar{\beta}_1}{\bar{\beta}_2}}n^++\sqrt{\frac{\bar{\beta}_4}{\bar{\beta}_3}}n^-=
\frac{1}{\sqrt{\rho^-(1,1)\rho^+({1,1})}}(\rho^-(1,1)n^++\rho^+(1,1)n^-)$
may be dissipative. On the other hand, by virtue of Mean Value
Theorem, we have $P-\bar{P}\sim \mathcal{C}(1,1)\left(\rho^-(1,1)
n^++\rho^+(1,1) n^-\right)$. In the spirit of these heuristic
observations, it is nature to choose the common pressure $P$ as a
new variable, and thus, in terms of the variables $(R^+, P, u^+,
u^-)$, we can reformulate the Cauchy problem of the model \eqref
{1.5} into the Cauchy problem \eqref{1.15}--\eqref{1.16}. Then, we
first do energy estimates on the dissipative variables. Next, we
deduce energy estimates on the non--dissipative variables to close
the a priori assumption \eqref{3.1} by making full use of the
obtained energy estimates on the dissipative variables. Roughly
speaking, our proofs mainly involves the following three steps.
\par First, we make spectral analysis and linear $L^2$ estimates on
dissipative components $(\theta, u^+, u^-)$ of the solution to the
linear system of \eqref{2.1}--\eqref{2.2}. To derive time--decay
estimates of the linear system \eqref{2.12}, it requires us to make
a detailed analysis on the properties of the semigroup. To simplify
the analysis of the Green function which is a $7\times 7$ system, we
will employ the Hodge decomposition technique firstly introduced by
Danchin \cite{Dan1} to split the linear system into three systems.
One has three distinct eigenvalues and the other two are classic
heat equations. By making careful pointwise estimates on the Fourier
transform of Green's function to the linear equations, we can obtain
the desired linear $L^2$ estimates, and refer to the proof of
Proposition \ref{2.6} for details.
\par Second, we deduce decay estimates of $(\theta, u^+, u^-)$.
Under a priori assumption: $\|(n^{\pm}, \theta, u^{\pm})\|_{H^3}$
$\leq \delta\ll 1$, we first deduce $H^2$--energy estimates of
$(\theta, u^+, u^-)$. Here, it is worth mentioning that at this
stage, we cannot derive $H^3$--energy estimates as the classic
results for the dissipation system. Indeed, since the common
pressure $P$ has been chosen as a new variable in our analysis, the
cost is that the strongly coupling terms like $\mathcal
C\rho^-u^+\cdot\nabla n^+$ and $\mathcal C\rho^+u^-\cdot\nabla n^- $
are involved in the right--hand side of equation $\eqref{2.1}_2$.
Therefore, in the derivation of the a priori energy estimate for
$\nabla^3\theta$, we encounter the trouble term likes
$\displaystyle\int_{\mathbb{R}^3}\mathcal{C}\rho^{\mp}u^\pm\cdot\nabla\nabla^3n^{\pm}\nabla^3\theta
\mathrm{d}x $, which however is out of control in the $H^3$ setting.
By making careful analysis and interpolation tricks, we can get
\begin{equation}
\displaystyle \frac{d}{dt} \mathcal{E}_1(t)+C\left(\|\nabla^2
u^\pm\|_{H^1}^2+\|\nabla\hbox{div}u^\pm\|_{H^1}^2\right)\lesssim
\delta\left(\|\nabla \theta\|_{H^1}^2+\|\nabla
u^\pm\|_{H^2}^2\right),\nonumber
\end{equation}
where $\mathcal{E}_1(t)$ is equivalent to $\|\nabla(\theta, u^+,
u^-)\|_{H^1}^2$. Next, by making energy estimates on the time
derivatives of $(\theta, u^+, u^-)$ and fully using parabolic
properties of the momentum equations in \eqref{2.1}, we deduce that
\begin{equation}
\displaystyle \frac{d}{dt}
\mathcal{E}_2(t)+C\left(\mathcal{E}_2(t)+\|u^{\pm}_{tt}\|_{L^2}^2\right)\lesssim
\delta\|\nabla (\theta^l, u^{\pm, l})\|_{L^2}^2,\nonumber
\end{equation}
where $\mathcal{E}_2(t)=\mathcal{E}_1(t)+\|(\theta_t, u^{\pm }_t,
\nabla u^{\pm}_t\|_{L^2}^2$.  Therefore, to close the energy
estimate of $\mathcal{E}_2(t)$, it suffices to show that $\|\nabla
(\theta^l, u^{\pm, l})\|_{L^2}$ decays sufficiently quickly.
Applying Duhamel principle, linear estimates obtained in Step 1,
Plancherel theorem, H\"{o}lder inequality and Hausdorff--Young
inequality, one has
\begin{equation}\label{1.23}\|\nabla(\theta^l,u^{\pm,l})(t)\|_{L^{2}}\lesssim K_0(1+t)^{-\frac{5}{4}}
+\int_0^t\left\|\nabla \text{e}^{(t-\tau)\mathcal{B}}\mathcal{
F}^l(\tau)\right\|_{L^2}\mathrm{d}\tau,
\end{equation}
where $\mathcal{F}=(F^1, F^2, F^3)^t$ denote the nonlinear source
terms. On the other hand, since $n^\pm$ are non--dissipative, the
strongly coupling terms $\mathcal C\rho^-u^+\cdot\nabla n^+$ and
$\mathcal C\rho^+u^-\cdot\nabla n^-$ in \eqref{2.3} devote the
slowest time--decay rates into the second term on the right--hand of
\eqref{1.23}. Therefore, this together with \eqref{1.23} implies
that one can only get the following estimate:
\begin{align*}&\|\nabla(\theta^l,u^{\pm,l})(t)\|_{L^{2}}\\
&\quad\lesssim K_0(1+t)^{-\frac{5}{4}}
+\delta_0\int_0^{\frac{t}{2}}\left\|\nabla
\text{e}^{(t-\tau)\mathcal{B}}\mathcal{
F}^l(\tau)\right\|_{L^2}\mathrm{d}\tau+\delta_0\int^t_{\frac{t}{2}}\left\|\nabla
\text{e}^{(t-\tau)\mathcal{B}}\mathcal{
F}^l(\tau)\right\|_{L^2}\mathrm{d}\tau\\
&\quad\lesssim K_0(1+t)^{-\frac{5}{4}}
+\delta_0\int_0^{\frac{t}{2}}(1+t-\tau)^{-\frac{5}{4}}\left\|\mathcal{F}^l(\tau)\right\|_{L^1}\mathrm{d}\tau
+\delta_0\int_{\frac{t}{2}}^t(1+t-\tau)^{-\frac{1}{2}}\left\|\mathcal{F}^l(\tau)\right\|_{L^2}\mathrm{d}\tau\\
&\quad\lesssim K_0(1+t)^{-\frac{5}{4}}
+\delta_0\int_0^{\frac{t}{2}}(1+t-\tau)^{-\frac{5}{4}}(1+\tau)^{-\frac{3}{4}}\mathrm{d}\tau
+\delta_0\int_{\frac{t}{2}}^t(1+t-\tau)^{-\frac{1}{2}}(1+\tau)^{-\frac{3}{2}}\mathrm{d}\tau\\
&\quad\lesssim K_0(1+t)^{-\frac{5}{4}}+\delta_0(1+t)^{-1},
\end{align*}
which however is not quickly enough for us to close energy estimates
of the non--dissipative variables $n^\pm$ (see the proof of Lemma
\ref{Lemma3.5} for details). To overcome this difficulty, it is
essential to develop new ideas to deal with the trouble terms
$\mathcal C\rho^-u^+\cdot\nabla n^+$ and $\mathcal
C\rho^+u^-\cdot\nabla n^-$. The key idea here is that we consider
the two trouble terms as one: $\mathcal C\rho^-u^+\cdot\nabla
n^++\mathcal C\rho^+u^-\cdot\nabla n^-$ and rewrite it in a clever
way. More specifically, by noticing that
$\rho^-s^2_--\rho^+s^2_+\neq 0$ and fully using the subtle relation
between the variables, we surprisingly find that
\begin{equation}\begin{split}
&\mathcal C\rho^-u^+\cdot\nabla n^++\mathcal C\rho^+u^-\cdot\nabla
n^-\\
&=\text{div}\left[\frac{s_+^2s_-^2\rho^+\rho^-(u^+-u^-)}{\rho^-s_-^2-\rho^+s_+^2}
\ln\left(\frac{
(\rho^+s_+)^2+(\rho^-s_-^2-\rho^+s_+^2)R^+}{(\bar\rho^+\bar
s_+)^2+(\bar\rho^-\bar s_-^2-\bar\rho^+\bar s_+^2)\bar R^+}\right)
\right]+\hbox{good terms}\\
&:=\hbox{div}\mathcal{R}_1+\hbox{good terms}.\label{1.24}
\end{split}\end{equation}
With this crucial observation, one can shift the derivative of
$\mathcal{R}_1$ onto the solution semigroup to derive the desired
decay estimates of $\|\nabla (\theta^l, u^{\pm, l})\|_{L^2}$, and
refer to the proof of \eqref{3.31} for details. Consequently, by
noticing the definition of $\mathcal{E}_2(t)$ and using the
parabolicity of the momentum equations in \eqref{2.1}, we can obtain
the key uniform estimate on $\|\nabla \theta\|_{H^1}+\|\nabla
u^{\pm}\|_{H^2}$. Finally, by using this crucial uniform estimate
and employing similar arguments, we can get the desired uniform
estimate of $\|(\theta, u^+, u^-)\|_{L^2}$.
\par In the last step, we close energy estimates of $n^\pm$.
Compared to \cite{c1, Evje9}, we need to develop new ingredients in
the proof to handle with the difficulties arising from the partially
dissipation system, which requires some new thoughts. Indeed, as in
\cite{c1, Evje9}, one may consider the corresponding linearized
system of the model \eqref{1.5}:
\begin{equation}\label{1.25}\left\{\begin{array}{l}
\partial_{t} n^{+}+\operatorname{div} u^{+}=S_1^+, \\
\partial_{t} u^{+}+\bar{\beta}_{1} \nabla n^{+}+\bar{\beta}_{2} \nabla n^{-}-\nu_{1}^{+} \Delta u^{+}-
\nu_{2}^{+} \nabla \operatorname{div} u^{+}=S_2^+, \\
\partial_{t} n^{-}+\operatorname{div} u^{-}=S_1^-, \\
\partial_{t} u^{-}+\bar{\beta}_{3} \nabla n^{+}+\bar{\beta}_{4} \nabla n^{-}-\nu_{1}^{-} \Delta u^{-}-
\nu_{2}^{-} \nabla \operatorname{div} u^{-}=S_2^-,
\end{array}\right.\end{equation}
where $S_1^{\pm}=-\hbox{div}(n^{\pm}u^\pm)$, and
\begin{align}\begin{split}S_2^{\pm}&=-\left(\frac{\mathcal{C}^2\rho^{\mp}(n^++1, \theta+\bar{P})}{\rho^{\pm}(n^++1,
\theta+\bar{P})}-\frac{\mathcal{C}^2\rho^{\mp}(1,
\bar{P})}{\rho^{\pm}(1,
\bar{P})}\right)\nabla n^{\pm}\\
&\quad-\left(\mathcal{C}^2(n^++1, \theta+\bar{P})-\mathcal{C}^2(1,
\bar{P})\right)\nabla n^{\mp}+\hbox{good
terms}.\label{1.26}\end{split}\end{align} Then, by virtue of the
first and third equations in \eqref{1.25}, it is easily to see that
\begin{equation}\label{1.27}
\frac{d}{dt}\|n^{\pm}\|_{H^k}\lesssim\|\nabla u^{\pm}\|_{H^k},~~
\hbox{for}~~ k=2,3,
\end{equation}
which together with the key uniform decay estimates of $\|\nabla
u^{\pm}\|_{H^2}$ in \eqref{3.22} implies the uniform boundedness of
$\|n^{\pm}\|_{H^2}$ directly. However, since we don't know whether
the $L^1(0,t)$--norm of $\|\nabla^{4}u^\pm\|_{L^2}$ is uniformly
bounded or not, it seems impossible to derive the uniform estimates
on $\|\nabla^3 n^{\pm}\|_{L^2}$ from \eqref{1.27}. Therefore, we
must pursue another route by resorting to the momentum equations in
\eqref{1.25}. By multiplying $\nabla^3 \eqref{1.25}_2$ and $\nabla^3
\eqref{1.25}_4$ by $\nabla^3 u^+$ and $\nabla^3 u^+$, and summing
up, one has from \eqref{1.26}
\begin{align}\begin{split}\label{1.28}
\displaystyle
\frac{d}{dt}\frac{1}{2}\int_{\mathbb{R}^3}\left|\nabla^3u^{\pm}\right|^2&+
\bar{\beta}_1\left|\nabla^3n^{+}\right|^2+\bar{\beta}_3\left|\nabla^3n^{-}\right|^2\mathrm{d}x\\
&+\int_{\mathbb{R}^3}\nu^{\pm}_1\left|\nabla^4u^{\pm}\right|^2+
\nu^{\pm}_2\left|\nabla^3\hbox{div}u^{\pm}\right|^2\mathrm{d}x\\
&
\hspace{-0.3cm}\begin{matrix}\approx\underbrace{\int_{\mathbb{R}^3}\nabla^3
u^{\pm}\cdot \nabla^3\left(n^\pm \nabla
n^\pm\right)\mathrm{d}x}+\hbox{other terms}.\\
\hspace{-1.75cm}\mathcal{R}_2\end{matrix}
\end{split}\end{align}
Noticing that $n^{\pm}$ has no decay rate in time, the term
$\mathcal{R}_2$ in the right--hand of \eqref{1.28} is out of control
since we don't know whether the $L^1(0,t)$--norm of
$\|\nabla^{4}u^\pm\|_{L^2}$ is uniformly bounded or not. To tackle
with this difficulty, our idea here is to introduce a new
semi--linearized system of the model \eqref{1.5} (refer to
\eqref{3.36}), where the term $\mathcal{R}_2$ exactly disappears.
However, we encounter an alternative trouble term coming from strong
couplings between the two fluids (refer to \eqref{3.45}):
\begin{equation}\displaystyle\mathcal{R}_3=\int_{\mathbb{R}^3}\nabla^3\left[\mathcal
C\left(\rho^-\nabla n^++\rho^+\nabla n^-
\right)\right]\cdot\nabla^3u^\pm \mathrm{d}x.\nonumber\end{equation}
By virtue of $\eqref{1.25}_1$ and $\eqref{1.25}_3$, we can rewrite
$\mathcal{{R}}_3$ as follows:
\begin{equation}
\begin{matrix}\hspace{0.55cm}\displaystyle\mathcal{R}_3=-\underbrace{\int_{\mathbb{R}^3}\mathcal{C}\rho^{-} \nabla^3 n^{+}\nabla^3
\text{div}u^\pm \mathrm{d}x}-\\
\hspace{1.4cm}\mathcal{R}_{3,1}\end{matrix}
\begin{matrix}\displaystyle\underbrace{\int_{\mathbb{R}^3}\mathcal{C}\rho^{+}
\nabla^3 n^{-}\nabla^3
\text{div}u^\pm \mathrm{d}x}+~\hbox{lower order terms}.\\
\hspace{-3.1cm}\mathcal{R}_{3,2}\end{matrix}\nonumber
\end{equation}
For $\mathcal{R}_{3,1}$, we can employ the equation $\eqref{1.25}_1$
to deal with. However, to bound $\mathcal{R}_{3,2}$, because of the
strongly coupling of the system and non--dissipation of $n^-$, it
requires us to develop new thoughts. The key idea here is to fully
use the special structure of the equation \eqref{3.36} and the fact
that the pressures of the two fluids are equal. To see this, we
first introduce the linear combination of two velocities $u^\pm$ as
follows:
\begin{equation}\label{1.29}v=(2\mu^++\lambda^+)u^+-(2\mu^-+\lambda^-)u^-,\end{equation}
which satisfies the following elliptic equation
\begin{equation}\label{1.30}\begin{split}-\Delta \hbox{div}v=-\text{div}(\rho^+u^+_t-\rho^-u^-_t)+\text{div}(G_1-G_2),\end{split}
\end{equation}
where $G_1$ and $G_2$ are defined in \eqref{3.36}. Then by employing
the classic elliptic estimate for \eqref{1.30}, we have (refer to
\eqref{3.41}):
\begin{equation}\nonumber\begin{split}\|\nabla^3\hbox{div}
v\|_{L^2}&\lesssim\left|\left|\nabla
u^\pm_t\right|\right|_{H^1}+\left|\left|\nabla
u^\pm\right|\right|_{H^2}\\
&\lesssim\left|\left|\left(u^\pm_{tt}, u^\pm_t, \nabla
u^\pm_t\right)\right|\right|_{L^2}+\left|\left|\nabla
u^\pm\right|\right|_{H^2},
\end{split}
\end{equation}
which together with the uniform decay estimate on
$\left|\left|u^\pm_t\right|\right|_{H^1}+\left|\left|\nabla
u^\pm\right|\right|_{H^2}$ and time--weighted estimate on
$\left|\left|u^\pm_{tt}\right|\right|_{L^2}$ in Step 2(see also \eqref{3.22} and \eqref{3.23}) leads to the
following key estimate:
\begin{equation}\label{1.31}\int_0^t\|\nabla^3\hbox{div}
v(\tau)\|_{L^2}\mathrm{d}\tau\leq CK_0.
\end{equation}
This is very remarkable, since the forth order spatial derivatives
of $u^\pm$ have no such good estimate. We think that this phenomenon
should owe to the special structure of the system. It should be
mentioned that this key observation plays an essential role in our
analysis. Next, we rewrite the term $\mathcal{R}_{3,2}$ as follows:
\begin{equation}\label{1.32}
\mathcal{R}_{3,2}=\displaystyle\int_{\mathbb{R}^3}\mathcal
C\rho^+\nabla^3 n^{-}\nabla^3
\left(\frac{(2\mu^\mp+\lambda^\mp)\text{div}u^\mp\pm\text{div}v}{2\mu^\pm+\lambda^\pm}\right)\mathrm{d}x.\end{equation}
Finally, by virtue of \eqref{1.31} and \eqref{1.32}, we can control
the trouble term $\mathcal{R}_{3,2}$ appropriately. We refer to the
proof of \eqref{3.45} for details.

\subsection{Notations and conventions}
Throughout this paper, we use $H^k(\mathbb R^3)$ to denote the usual
Sobolev space with norm $\|\cdot\|_{H^k}$ and $L^p$, $1\leq p\leq
\infty$ to denote the usual $L^p(\mathbb R^3)$ space with norm
$\|\cdot\|_{L^p}$.  For the sake of conciseness, we do not precise
in functional space names when they are concerned with
scalar--valued or vector--valued functions, $\|(f, g)\|_X$ denotes
$\|f\|_X+\|g\|_X$.  We will employ the notation $a\lesssim b$ to
mean that $a\leq Cb$ for a universal constant $C>0$ that only
depends on the parameters coming from the problem. We denote
$\nabla=\partial_x=(\partial_1,\partial_2,\partial_3)$, where
$\partial_i=\partial_{x_i}$, $\nabla_i=\partial_i$ and put
$\partial_x^\ell f=\nabla^\ell f=\nabla(\nabla^{\ell-1}f)$.  Let
$\Lambda^s$ be the pseudo differential operator defined by
\begin{equation}\Lambda^sf=\mathfrak{F}^{-1}(|{\bf \xi}|^s\widehat f),~\hbox{for}~s\in \mathbb{R},\nonumber\end{equation}
where $\widehat f$ and $\mathfrak{F}(f)$ are the Fourier transform
of $f$. The homogenous Sobolev space $\dot{H}^s(\mathbb{R}^3)$ with
norm given by $\|f\|_{\dot{H}^s}\overset{\triangle}=\|\Lambda^s
f\|_{L^2}$. For a radial function $\phi\in C_0^\infty(\mathbb
R^3_{{\bf \xi}})$ such that $\phi({\bf \xi})=1$ when $|{\bf
\xi}|\leq \frac{\eta}{2}$ and $\phi({\bf \xi})=0$ when $|{\bf
\xi}|\geq \eta$, where $\eta$ is defined in Lemma \ref{2.1}, we
define the low--frequency part of $f$ by
$$f^l=\mathfrak{F}^{-1}[\phi({\bf \xi})\widehat f]$$
and the high--frequency part of $f$ by
$$f^h=\mathfrak{F}^{-1}[(1-\phi({\bf \xi}))\widehat f].$$
It is direct to check that $f=f^l+f^h$ if the Fourier transform of
$f$ exists.

\bigskip

\section{\leftline {\bf{Spectral analysis and linear $L^2$ estimates.}}}
\setcounter{equation}{0}
\subsection{Reformulation} In this subsection, we first reformulate the system.
Setting   \[ n^{\pm}=R^{\pm}-1\quad\text{and} \quad \theta=P-\bar P,
\]
the Cauchy problem \eqref{1.15} and \eqref{1.16} can be reformulated
as
\begin{equation}\label{2.1}
\left\{\begin{array}{l}
\partial_{t}n^++\operatorname{div} u^+=-\operatorname{div} (n^+u^+),\\
\partial_{t}\theta+\beta_1\operatorname{div}u^++\beta_2\operatorname{div}u^-=F_1,\\
\partial_{t}u^{+}+\beta_3\nabla
\theta-\nu^+_1\Delta u^+-\nu^+_2\nabla\operatorname{div} u^+=F_2, \\
\partial_{t}u^{-}+\beta_4\nabla \theta-\nu^-_1\Delta u^--\nu^-_2\nabla\operatorname{div} u^-=F_3, \\
\end{array}\right.
\end{equation}
with initial data
\begin{equation}\label{2.2}
\left(n^{+}, \theta,u^{+}, u^{-}\right)(x, 0)=\left(n_{0}^{+}, \theta_0, u_{0}^{+}, u_{0}^{-}\right)(x) \rightarrow(0,0, \overrightarrow{0}, \overrightarrow{0}), \quad \text { as }|x| \rightarrow+\infty.
\end{equation}
Here $\beta_{1}=\mathcal C \left(1,\bar P\right)\rho^-(\bar P)$,
$\beta_{2}=\mathcal C \left(1,\bar P\right)\rho^+(\bar P),$
$\beta_{3}=\frac{1}{\rho^+(\bar P)}$, $\beta_{4}=\frac{1}{\rho^-(\bar P)}$,
$\nu_{1}^{\pm}=\frac{\mu^{\pm}}{\rho^{\pm}(\bar P)}$,
$\nu_{2}^{\pm}=\frac{\mu^{\pm}+\lambda^{\pm}}{\rho^{\pm}(\bar P)}>0$ and
the nonlinear terms are given by
\begin{align}
\label{2.3}F_{1}=&-g_{1}^+\left(n^{+}, \theta\right)\operatorname{div} u^+-g_{1}^-\left(n^{+}, \theta\right)\operatorname{div} u^-
-\mathcal C\rho^-u^+\cdot\nabla n^+-\mathcal C\rho^+u^-\cdot\nabla n^-, \\
F_{2}=&-u^+\cdot\nabla u^+-g_{2}^+\nabla \theta+g^{+}_3\Delta u^++g^{+}_4\nabla\operatorname{div} u^++\frac{\mu^+\left(\nabla u^{+}+\nabla^{t}
 u^{+}\right)\nabla\alpha^+}{R^+}+\frac{\lambda^+\operatorname{div} u^+\nabla\alpha^+}{R^+},\label{2.4}\\
F_{3}=&-u^-\cdot\nabla u^--g_{2}^-\nabla \theta+g^{-}_3\Delta u^-+g^{-}_4\nabla\operatorname{div} u^-+\frac{\mu^-\left(\nabla u^{-}+
\nabla^{t} u^{-}\right)\nabla\alpha^-}{R^-}+\frac{\lambda^-\operatorname{div} u^-\nabla\alpha^-}{R^-}\label{2.5},
\end{align}
where the nonlinear functions $g^{\pm}_i$$(1\leq i\leq 4)$ are defined by
\begin{equation}\label{2.6}
\left\{\begin{array}{l}
g^{+}_1\left(n^{+}, \theta\right)=\mathcal {C}(n^++1, \theta+\bar{P})\rho^-R^+-\beta_1, \\
g^{-}_1\left(n^{+}, \theta\right)=\mathcal {C}(n^++1, \theta+\bar{P})\rho^+R^--\beta_2,\\
\end{array}\right.\end{equation}
\begin{equation}\label{2.7}
\left\{\begin{array}{l}
g^{+}_2\left(\theta\right)=\frac{1}{\rho^+}-\beta_3, \\
g^{-}_2\left(\theta\right)=\frac{1}{\rho^-}-\beta_4,\\
\end{array}\right.\end{equation}
\begin{equation}\label{2.8}
g^{\pm}_3\left(\theta\right)=\frac{\mu^{\pm}}{\rho^{\pm}}-\nu_{1}^{\pm},
\end{equation}
\begin{equation}\label{2.9}
g^{\pm}_4\left(\theta\right)=\frac{\mu^{\pm}+\lambda^{\pm}}{\rho^{\pm}}-\nu_2^{\pm}.\end{equation}
The linearized system corresponding to the Cauchy problem \eqref{2.1}--\eqref{2.2} reads
\begin{equation}\label{2.11}
\left\{\begin{array}{l}
\partial_{t} n^++\operatorname{div}u^+=0, \\
\partial_{t}\theta+\beta_1\operatorname{div}u^++\beta_2\operatorname{div}u^-=0,\\
\partial_{t}u^{+}+\beta_3\nabla
\theta-\nu^+_1\Delta u^+-\nu^+_2\nabla\operatorname{div} u^+=0, \\
\partial_{t}u^{-}+\beta_4\nabla \theta-\nu^-_1\Delta u^--\nu^-_2\nabla\operatorname{div} u^-=0, \\
\left(n^{+}, \theta, u^{+},  u^{-}\right)(x, 0)=\left(n_{0}^{+}, \theta_0, u_{0}^{+},  u_{0}^{-}\right)(x).
\end{array}\right.
\end{equation}
It is easy to check that there is a zero eigenvalue for the
linearized problem \eqref{2.11}, which makes the problem become much
more difficult and complicated. Noticing that the equations
\eqref{2.11}$_2$, \eqref{2.11}$_3$ and \eqref{2.11}$_4$ are
decoupled with $n^+$, thus we consider the following Cauchy problem
for $(\theta, u^+,u^-)$, which enables us to exclude the case of
zero eigenvalue
\begin{equation}\label{2.12}
\left\{\begin{array}{l}
\partial_{t}\theta+\beta_1\operatorname{div}u^++\beta_2\operatorname{div}u^-=0,\\
\partial_{t}u^{+}+\beta_3\nabla
\theta-\nu^+_1\Delta u^+-\nu^+_2\nabla\operatorname{div} u^+=0, \\
\partial_{t}u^{-}+\beta_4\nabla \theta-\nu^-_1\Delta u^--\nu^-_2\nabla\operatorname{div} u^-=0, \\
\left(\theta,  u^{+}, u^{-}\right)(x, 0)=\left(\theta_0, u_{0}^{+},  u_{0}^{-}\right)(x).
\end{array}\right.
\end{equation}

 In terms of the semigroup theory
for evolutionary equation, the solution $\left(\theta,u^+,u^-\right)$ of linear Cauchy problem \eqref{2.12} can
be expressed via the Cauchy problem for $U=\left(\theta,u^+,u^-\right)^t$ as
\begin{equation}
\begin{cases}
{U}_t=\mathcal B{U},\\
{U}\big|_{t=0}={U}_0,
\end{cases}   \label{2.14}
\end{equation}
where the operator $\mathcal B$ is given by
\begin{equation}\nonumber\mathcal B=\begin{pmatrix}
0&-\beta_1\text{div}&-\beta_2\text{div}\\
-\beta_3\nabla&\nu_{1}^{+} \Delta+\nu_{2}^{+} \nabla \text{div}&0\\
-\beta_4\nabla&0&\nu_{1}^{-} \Delta+\nu_{2}^{-} \nabla \text{div}
\end{pmatrix}.\end{equation}
Applying Fourier transform to the system \eqref{2.14}, one has
\begin{equation}
\begin{cases}
\widehat {{U}}_t=\mathcal A(\xi)\widehat {U},\\
\widehat {U}\big|_{t=0}=\widehat U_0,
\end{cases}   \label{2.15}
\end{equation}
where $\widehat {U}(\xi,t)=\mathfrak{F}(U(x,t))$, $\xi=(\xi^1,\xi^2,\xi^3)^t$ and $\mathcal A(\xi)$ is defined by
\begin{equation}\nonumber\mathcal A(\xi)=\begin{pmatrix}
0&-{i}\beta_1\xi^t&-{i}\beta_2\xi^t\\
-{i}\beta_3\xi&-\nu_1^+|\xi|^2{\rm I}_{3\times 3}-\nu_2^+\xi\otimes\xi&0\\
- {i}\beta_4\xi&0&-\nu_1^-|\xi|^2{\rm I}_{3\times 3}-\nu_2^-\xi\otimes\xi
\end{pmatrix}.\end{equation}

To derive the linear time--decay estimates,  by using a real method
as in \cite{Kobayashi2, Mat1, Mat2}, one need to make a detailed
analysis on the properties of the semigroup. To simplify the
analysis of the Green function which is a $7\times 7$ system, we
will employ the Hodge decomposition technique firstly introduced by
Danchin \cite{Dan1} to split the linear system into three systems.
One only has three equations and  its characteristic polynomial
possesses three distinct roots, the other two systems are the heat
equation. This key observation allows us to get the optimal linear
convergence rates.

 Let
$\varphi^{\pm}=\Lambda^{-1}{\rm div}{u}^{\pm}$
be the ``compressible part" of the velocities ${u}^{\pm}$, and denote $\phi^{\pm}=\Lambda^{-1}{\rm curl}{u}^{\pm}$ (with $({\rm curl} z)_i^j
=\partial_{x_j}z^i-\partial_{x_i}z^j$) by the ``incompressible part"
of the velocities ${u}^{\pm}$. Then, we can rewrite
the system \eqref{2.12} as follows:
\begin{equation}\label{2.11-1}
\begin{cases}
\partial_t{\theta}+\beta_1\Lambda{\varphi^+}+\beta_2\Lambda{\varphi^-}=0,\\
\partial_t{\varphi^+}-\beta_3\Lambda\theta+\nu^+\Lambda^2{\varphi^+}=0,\\
\partial_t{\varphi^-}-\beta_4\Lambda\theta+\nu^-\Lambda^2{\varphi^-}=0,\\
(\theta, \varphi^+,  \varphi^-)\big|_{t=0}=(\theta_0, \Lambda^{-1}{\rm div}{u}^{+}_0,  \Lambda^{-1}{\rm div}{u}^{-}_0)(x),\\
\end{cases}
\end{equation}
and
\begin{equation}\label{2.12-1}
\begin{cases}
\partial_t\phi^\pm+\nu^\pm_1\Lambda^2\phi^\pm=0,\\
\phi^\pm\big|_{t=0}=\Lambda^{-1}{\rm curl}{u}^{\pm}_0(x),
\end{cases}
\end{equation}
where $\nu^{\pm}=\nu^{\pm}_1+\nu^{\pm}_2$.
\smallskip
\subsection{Spectral analysis for IVP \eqref{2.11-1}} In terms of the semigroup theory, we may represent the
 IVP \eqref{2.11-1} for $\mathcal U=(\theta, \varphi^+,  \varphi^-)^t$ as
\begin{equation}
\begin{cases}
\mathcal U_t=\mathcal B_1\mathcal U,\\
\mathcal U\big|_{t=0}=\mathcal U_0,
\end{cases}   \label{2.13}
\end{equation}
where the operator $\mathcal B_1$ is defined
by
\begin{equation}\nonumber\mathcal B_1=\begin{pmatrix}
0&-\beta_1\Lambda&-\beta_2\Lambda\\
\beta_3\Lambda&-\nu^+\Lambda^2&0\\
\beta_4\Lambda&0&-\nu^-\Lambda^2
\end{pmatrix}.\end{equation}
Taking the Fourier transform to the system \eqref{2.13}, we obtain
\begin{equation}
\begin{cases}
\widehat {\mathcal U}_t=\mathcal A_1(\xi)\widehat {\mathcal U},\\
\widehat {\mathcal U}\big|_{t=0}=\widehat {\mathcal U}_0,
\end{cases}   \label{2.14-1}
\end{equation}
where $\widehat {\mathcal U}(\xi,t)=\mathfrak{F}({\mathcal U}(x,t))$  and $\mathcal A_1(\xi)$ is given by
\begin{equation}\nonumber\mathcal A_1(\xi)=\begin{pmatrix}
0&-\beta_1|\xi|&-\beta_2|\xi|\\
\beta_3|\xi|&-\nu^+|\xi|^2&0\\
\beta_4|\xi|&0&-\nu^-|\xi|^2
\end{pmatrix}.\end{equation}
We compute the eigenvalues of matrix $\mathcal A_1(\xi)$  from the
determinant
\begin{equation}\begin{split}\label{2.15-1}&{\rm det}(\lambda{\rm I}-\mathcal A_1(\xi))\\
&=\lambda^3+(\nu^++\nu^-)|\xi|^2\lambda^2+(\beta_1\beta_3+\beta_2\beta_4+\nu^+\nu^-|\xi|^2)
|\xi|^2\lambda+(\beta_1\beta_3\nu^-+\beta_2\beta_4\nu^+)|\xi|^4,
\end{split}\end{equation}
which implies that matrix $\mathcal A_1(\xi)$ possesses three
different eigenvalues:
\begin{equation}\nonumber
 \lambda_1=\lambda_1(|\xi|),\quad \lambda_2=\lambda_2(|\xi|),\quad
 \lambda_3=\lambda_3(|\xi|).
\end{equation}
Consequently, the semigroup $\text{e}^{t\mathcal A_1}$ can be decomposed
into
\begin{equation}\label{2.16}
 \text{e}^{t\mathcal A_1(\xi)}=\sum_{k=1}^3\text{e}^{\lambda_kt}P_k(\xi),
\end{equation}
where the projector $P_k(\xi)$ is defined by
\begin{equation}\label{2.17}
 P_k(\xi)=\prod_{j\neq k}\frac{\mathcal A_1(\xi)-\lambda_jI}{\lambda_k-\lambda_j}, \quad k,j=1,2,3.
\end{equation}
Thus, the solution of IVP \eqref{2.14-1} can be expressed as
\begin{equation}
\widehat {\mathcal U}(\xi,t)=\text{e}^{t\mathcal A_1(\xi)}\widehat {\mathcal U}_0(\xi)=\left(\sum_{k=1}^3
\text{e}^{\lambda_kt}P_k(\xi)\right)\widehat {\mathcal U}_0(\xi).\label{2.18}
\end{equation}

To derive long time properties of the semigroup $\text{e}^{t\mathcal
A_1}$ in $L^2$ framework, one need to analyze the asymptotical
expansions of $\lambda_k$, $P_k$ $(k =1, 2, 3)$ and
$\text{e}^{t\mathcal A_1(\xi)}$ in the low frequency part. Employing
the similar argument of Taylor series expansion as in
\cite{Kobayashi2, Mat1, Mat2}, we have from tedious calculations
that
\begin{Lemma}\label{lemma2.1}
There exists a positive constants $\eta\ll 1 $ such that, for
$|\xi|\leq \eta$, the spectral has the following Taylor series
expansion
\begin{equation}\label{2.19}
\left\{\begin{array}{lll}\displaystyle \lambda_1=-\frac{\beta_1\beta_3\nu^++\beta_2\beta_4\nu^-}{2(\beta_1\beta_3+\beta_2\beta_4)}|\xi|^2+\sqrt{\beta_1\beta_3+\beta_2\beta_4}\text{i}|\xi|+\mathcal O(|\xi|^3),\\
\displaystyle\lambda_2=-\frac{\beta_1\beta_3\nu^++\beta_2\beta_4\nu^-}{2(\beta_1\beta_3+\beta_2\beta_4)}|\xi|^2-\sqrt{\beta_1\beta_3+\beta_2\beta_4}\text{i}|\xi|+\mathcal O(|\xi|^3),
\\ \displaystyle  \lambda_3=-\frac{\beta_1\beta_3\nu^-+\beta_2\beta_4\nu^+}{\beta_1\beta_3+\beta_2\beta_4}|\xi|^2+\mathcal O(|\xi|^3).
\end{array}\right.
\end{equation}
\end{Lemma}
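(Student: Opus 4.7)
The plan is to obtain the Puiseux expansions of the three roots of the characteristic polynomial
\[
p(\lambda,\xi)=\lambda^{3}+(\nu^{+}+\nu^{-})|\xi|^{2}\lambda^{2}+\bigl(\beta_{1}\beta_{3}+\beta_{2}\beta_{4}+\nu^{+}\nu^{-}|\xi|^{2}\bigr)|\xi|^{2}\lambda+(\beta_{1}\beta_{3}\nu^{-}+\beta_{2}\beta_{4}\nu^{+})|\xi|^{4}
\]
from \eqref{2.15-1} in a neighbourhood of $|\xi|=0$. Note that $p(\lambda,0)=\lambda^{3}$, so $\lambda=0$ is a triple root when $\xi=0$ and the usual implicit function theorem does not apply directly; one must first predict the correct fractional order of vanishing of each branch. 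Since $p$ depends on $\xi$ only through $s=|\xi|^{2}$, I regard it as $p(\lambda,s)$ and read off its Newton polygon from the monomial exponents, namely from $(3,0),(2,1),(1,1),(1,2),(0,2)$. The lower convex hull consists of two segments, one from $(3,0)$ to $(1,1)$ of slope $-1/2$ and one from $(1,1)$ to $(0,2)$ of slope $-1$. This predicts exactly two roots behaving like $s^{1/2}=|\xi|$ and one root behaving like $s=|\xi|^{2}$.

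Accordingly, I first insert the ansatz $\lambda=c\,|\xi|+O(|\xi|^{2})$ into $p$, collect the lowest order terms, which occur at order $|\xi|^{3}$, and obtain
\[
c\bigl(c^{2}+\beta_{1}\beta_{3}+\beta_{2}\beta_{4}\bigr)=0.
\]
Since $\beta_{1}\beta_{3}+\beta_{2}\beta_{4}>0$, the two non-trivial solutions are $c=\pm\mathrm{i}\sqrt{\beta_{1}\beta_{3}+\beta_{2}\beta_{4}}$, which gives the leading imaginary parts of $\lambda_{1},\lambda_{2}$. Next I insert $\lambda=c\,|\xi|^{2}+O(|\xi|^{3})$ and balance terms of order $|\xi|^{4}$, which yields the linear equation $(\beta_{1}\beta_{3}+\beta_{2}\beta_{4})c+(\beta_{1}\beta_{3}\nu^{-}+\beta_{2}\beta_{4}\nu^{+})=0$, producing the leading coefficient of $\lambda_{3}$ stated in \eqref{2.19}.

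To pin down the real $O(|\xi|^{2})$ correction to $\lambda_{1}$ and $\lambda_{2}$, I use the Vieta identity
\[
\lambda_{1}+\lambda_{2}+\lambda_{3}=-(\nu^{+}+\nu^{-})|\xi|^{2}.
\]
Writing $\lambda_{1,2}=\pm\mathrm{i}\sqrt{\beta_{1}\beta_{3}+\beta_{2}\beta_{4}}\,|\xi|+\alpha_{1,2}|\xi|^{2}+O(|\xi|^{3})$, the imaginary parts cancel, and since $p$ has real coefficients the roots $\lambda_{1},\lambda_{2}$ form a conjugate pair, forcing $\alpha_{1}=\alpha_{2}$. Combined with the expansion of $\lambda_{3}$, Vieta's identity then gives
\[
2\alpha_{1}=-(\nu^{+}+\nu^{-})+\frac{\beta_{1}\beta_{3}\nu^{-}+\beta_{2}\beta_{4}\nu^{+}}{\beta_{1}\beta_{3}+\beta_{2}\beta_{4}}=-\frac{\beta_{1}\beta_{3}\nu^{+}+\beta_{2}\beta_{4}\nu^{-}}{\beta_{1}\beta_{3}+\beta_{2}\beta_{4}},
\]
reproducing the real part in \eqref{2.19}. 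Finally, a standard application of the implicit function theorem after the rescaling $\lambda=|\xi|\tilde{\lambda}$ (respectively $\lambda=|\xi|^{2}\tilde{\lambda}$) confirms that the expansions are genuine smooth (in $|\xi|$) Puiseux series, so that the remainder is uniformly $O(|\xi|^{3})$ on some interval $|\xi|\leq\eta$; this choice of $\eta\ll 1$ also guarantees that the three roots stay distinct. The main, and essentially only, obstacle is the bookkeeping: correctly identifying which monomials contribute to each order through the Newton polygon so as not to lose a term, and exploiting the complex-conjugate symmetry so that the $O(|\xi|^{2})$ correction for $\lambda_{1},\lambda_{2}$ is determined by a single scalar equation rather than requiring a full second-order substitution.
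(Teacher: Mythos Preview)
Your proposal is correct and follows essentially the same approach the paper invokes, namely perturbative Taylor expansion of the roots of the characteristic polynomial \eqref{2.15-1} near $|\xi|=0$; the paper itself gives no details beyond citing \cite{Kobayashi2,Mat1,Mat2} and ``tedious calculations,'' so your argument is in fact considerably more explicit. Your use of the Newton polygon to identify the correct scalings and of Vieta's identity (together with the complex-conjugate symmetry) to extract the $O(|\xi|^{2})$ real part of $\lambda_{1},\lambda_{2}$ are efficient shortcuts that replace a direct second-order substitution, but they do not constitute a genuinely different route.
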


By virtue of \eqref{2.17}--\eqref{2.19}, we can establish the
following estimates for the low--frequent part of the solutions
$\widehat{\mathcal U}(t,\xi)$ to the IVP \eqref{2.14-1}:
\begin{Lemma}\label{lemma2.2} Let $\displaystyle\bar{\nu}=\min\left\{\frac{\beta_1\beta_3\nu^++\beta_2\beta_4\nu^-}{2(\beta_1\beta_3+\beta_2\beta_4)},\frac{\beta_1\beta_3\nu^-+\beta_2\beta_4\nu^+}{\beta_1\beta_3+\beta_2\beta_4}\right\}>0$,
 we have
\begin{equation}\label{2.20}
 |\widehat{\theta}|,~|\widehat{\varphi^+}|,~ |\widehat{\varphi^-}|\lesssim \text{e}^{-\bar{\nu}|\xi|^2t}
 ( |\widehat{\theta_0}|+|\widehat{\varphi^+_0}|+|\widehat{\varphi^-_0}|),
\end{equation}
for any $|\xi|\le \eta$.
\end{Lemma}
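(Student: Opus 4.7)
The plan is to combine the spectral decomposition \eqref{2.18} with the eigenvalue asymptotics of Lemma \ref{lemma2.1}. Writing
\[
\widehat{\mathcal U}(\xi,t)=\sum_{k=1}^{3}\text{e}^{\lambda_k t}P_k(\xi)\widehat{\mathcal U_0}(\xi),
\]
it suffices to show, for $|\xi|\le\eta$ (shrinking $\eta$ if necessary), two things: (i) each scalar factor satisfies $|\text{e}^{\lambda_k t}|\le \text{e}^{-\bar\nu|\xi|^2 t}$ for $k=1,2,3$, and (ii) the matrix projectors $P_k(\xi)$ are uniformly bounded in $\xi$. Granted both, the pointwise identity above immediately implies $|\widehat{\mathcal U}(\xi,t)|\lesssim \text{e}^{-\bar\nu|\xi|^2 t}|\widehat{\mathcal U_0}(\xi)|$ componentwise, which is \eqref{2.20}.

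Step (i) is routine given \eqref{2.19}: for $|\xi|\le\eta$ the cubic remainders in $\operatorname{Re}\lambda_k$ are absorbed by the (strictly negative) quadratic leading coefficients, and an elementary comparison of the two leading quadratic coefficients appearing in \eqref{2.19} shows that their common lower envelope is exactly the constant $\bar\nu$ defined in the statement, so $\operatorname{Re}\lambda_k\le -\bar\nu|\xi|^2$ for $k=1,2,3$. Step (ii) is the substantive point. From the explicit matrix $\mathcal A_1(\xi)$ one sees that every entry of $\mathcal A_1(\xi)-\lambda_j I$ is $O(|\xi|)$ in the low-frequency regime; hence the product $\prod_{j\neq k}(\mathcal A_1(\xi)-\lambda_j I)$ appearing in the numerator of \eqref{2.17} has entries of order $O(|\xi|^2)$. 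For the denominator, \eqref{2.19} gives $\lambda_1-\lambda_2=2\sqrt{\beta_1\beta_3+\beta_2\beta_4}\,\text{i}|\xi|+O(|\xi|^3)$ and $\lambda_{1,2}-\lambda_3=\pm\sqrt{\beta_1\beta_3+\beta_2\beta_4}\,\text{i}|\xi|+O(|\xi|^2)$, whence $|\lambda_k-\lambda_j|\ge c|\xi|$ for some $c>0$ when $\eta$ is small enough, and therefore $\prod_{j\neq k}|\lambda_k-\lambda_j|\gtrsim |\xi|^2$. Consequently $\|P_k(\xi)\|\lesssim 1$ uniformly in $\xi$ on the low-frequency ball.

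The main obstacle is (ii): the formula \eqref{2.17} appears singular as $|\xi|\to 0$ because the eigenvalue gaps shrink like $|\xi|$, so one must verify that the numerator vanishes at precisely the same rate. The crucial structural fact is that every entry of $\mathcal A_1(\xi)$ contains at least one factor of $|\xi|$, so the two-fold matrix product in the numerator automatically provides the $O(|\xi|^2)$ cancellation needed to tame the $O(|\xi|^{-2})$ denominator. Once this uniform projector bound is in hand, substitution into the spectral formula completes the proof.
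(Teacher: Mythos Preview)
Your proposal is correct and follows the same overall strategy as the paper: plug the eigenvalue expansions \eqref{2.19} into the spectral representation \eqref{2.18}, control $|\text{e}^{\lambda_k t}|$ by $\text{e}^{-\bar\nu|\xi|^2 t}$, and show that the projectors $P_k(\xi)$ are uniformly bounded for $|\xi|\le\eta$. The one genuine difference is in how you establish the projector bound. The paper carries out the explicit computation of each $P_k(\xi)$ to leading order, displaying the constant matrices in \eqref{2.21}--\eqref{2.23} and observing that the remainders are $O(|\xi|)$. Your argument is more economical: you note that every entry of $\mathcal A_1(\xi)-\lambda_j I$ is $O(|\xi|)$, so the numerator in \eqref{2.17} is $O(|\xi|^2)$, while the eigenvalue gaps give a denominator $\gtrsim|\xi|^2$, whence $\|P_k(\xi)\|\lesssim 1$ without any explicit matrix algebra. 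Both arguments are valid for this lemma; your order-of-magnitude route is shorter, but the paper's explicit formulas for $P_k$ are not wasted effort---they are reused in the proof of Proposition~\ref{Prop2.4} to extract the \emph{lower} bound on the decay rate, where knowing the actual leading-order entries (not just their boundedness) is essential.
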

\begin{proof} By virtue of formula \eqref{2.17} and Taylor series expansion
of $\lambda_k$ $(k=1,2,3)$ in \eqref{2.19}, we can represent $P_k$ $( k=1,2,3)$ as follows:
\begin{equation}\label{2.21}\begin{split}P_1(\xi)=&\begin{pmatrix}
\frac{1}{2}&
\frac{\beta_1}{2\sqrt{\beta_1\beta_3+\beta_2\beta_4}}\text{i}&\frac{\beta_2}{2\sqrt{\beta_1\beta_3+\beta_2\beta_4}}\text{i}\\
-\frac{\beta_3}{2\sqrt{\beta_1\beta_3+\beta_2\beta_4}}\text{i}&\frac{\beta_1\beta_3}{2\left(\beta_1\beta_3+\beta_2\beta_4\right)}
&\frac{\beta_2\beta_3}{2\left(\beta_1\beta_3+\beta_2\beta_4\right)}\\
-\frac{\beta_4}{2\sqrt{\beta_1\beta_3+\beta_2\beta_4}}\text{i}&\frac{\beta_1\beta_4}{2\left(\beta_1\beta_3+\beta_2\beta_4\right)}
&\frac{\beta_2\beta_4}{2\left(\beta_1\beta_3+\beta_2\beta_4\right)}
\end{pmatrix}+\mathcal O(|\xi|),\end{split}\end{equation}

\begin{equation}\label{2.22}\begin{split}P_2(\xi)=&\begin{pmatrix}
\frac{1}{2}&
-\frac{\beta_1}{2\sqrt{\beta_1\beta_3+\beta_2\beta_4}}\text{i}&-\frac{\beta_2}{2\sqrt{\beta_1\beta_3+\beta_2\beta_4}}\text{i}\\
\frac{\beta_3}{2\sqrt{\beta_1\beta_3+\beta_2\beta_4}}\text{i}&\frac{\beta_1\beta_3}{2\left(\beta_1\beta_3+\beta_2\beta_4\right)}
&\frac{\beta_2\beta_3}{2\left(\beta_1\beta_3+\beta_2\beta_4\right)}\\
\frac{\beta_4}{2\sqrt{\beta_1\beta_3+\beta_2\beta_4}}\text{i}&\frac{\beta_1\beta_4}{2\left(\beta_1\beta_3+\beta_2\beta_4\right)}
&\frac{\beta_2\beta_4}{2\left(\beta_1\beta_3+\beta_2\beta_4\right)}
\end{pmatrix}+\mathcal O(|\xi|),\end{split}\end{equation}

and
\begin{equation}\label{2.23}\begin{split}P_3(\xi)=&\begin{pmatrix}
0&
0&0\\
0&\frac{\beta_2\beta_4}{\beta_1\beta_3+\beta_2\beta_4}
&-\frac{\beta_2\beta_3}{\beta_1\beta_3+\beta_2\beta_4}\\
0&-\frac{\beta_1\beta_4}{\beta_1\beta_3+\beta_2\beta_4}
&\frac{\beta_1\beta_3}{\beta_1\beta_3+\beta_2\beta_4}
\end{pmatrix}+\mathcal O(|\xi|),\end{split}\end{equation}
for any $|\xi|\le \eta$. Therefore, \eqref{2.20} follows from
\eqref{2.18}--\eqref{2.19} and \eqref{2.21}--\eqref{2.23} immediately.
\end{proof}

With the help of \eqref{2.20}, we can get the following Proposition which is concerned with
the optimal $L^2$ convergence rate on the low--frequency part of the
solution.
\begin{Proposition}[$L^2$--theory]\label{Prop2.3} Let $k\ge 0$ and  $1\le p \le 2$, it holds that
\begin{equation}\|\nabla ^k\text{e}^{t\mathcal B_1}\mathcal U^l(0)\|_{L^2}\lesssim(1+t)^{-\frac{3}{2}\left(\frac{1}{p}-\frac{1}{2}\right)-\frac{k}{2}}\| {\mathcal U}(0)\|_{L^{p}},\label{2.24}\end{equation}
 for any $t\geq 0$.
\end{Proposition}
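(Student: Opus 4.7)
The plan is to pass to the Fourier side via Plancherel, exploit the pointwise heat--kernel--type bound supplied by Lemma \ref{lemma2.2}, and then convert the resulting weighted Gaussian integral into an estimate in terms of $\|\mathcal{U}(0)\|_{L^p}$ by combining H\"older's inequality with the Hausdorff--Young inequality. Since the low--frequency cut--off $\mathcal{U}(0)^l$ satisfies $\widehat{\mathcal{U}(0)^l} = \phi(\xi) \widehat{\mathcal{U}(0)}$ with $\phi$ supported in $\{|\xi|\leq \eta\}$, Plancherel gives
\[
\|\nabla^k \mathrm{e}^{t\mathcal{B}_1}\mathcal{U}(0)^l\|_{L^2}^2 \lesssim \int_{|\xi|\leq \eta} |\xi|^{2k} \bigl|\mathrm{e}^{t\mathcal{A}_1(\xi)}\widehat{\mathcal{U}}(0)(\xi)\bigr|^2 \mathrm{d}\xi,
\]
and Lemma \ref{lemma2.2} bounds the integrand by $|\xi|^{2k} \mathrm{e}^{-2\bar{\nu}|\xi|^2 t} |\widehat{\mathcal{U}}(0)|^2$.

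Next, for $1\leq p < 2$, let $p'$ denote the conjugate exponent and apply H\"older's inequality with exponents $p'/2$ and $p'/(p'-2)$:
\[
\int_{|\xi|\leq \eta} |\xi|^{2k} \mathrm{e}^{-2\bar{\nu}|\xi|^2 t} |\widehat{\mathcal{U}}(0)|^2 \mathrm{d}\xi \leq \|\widehat{\mathcal{U}}(0)\|_{L^{p'}}^2 \left(\int_{\mathbb R^3} |\xi|^{2kq} \mathrm{e}^{-2\bar{\nu} q |\xi|^2 t} \mathrm{d}\xi\right)^{1/q},
\]
with $q=p'/(p'-2)$. Hausdorff--Young controls the first factor by $\|\mathcal{U}(0)\|_{L^p}^2$. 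For the Gaussian factor, the substitution $\xi \mapsto \xi/\sqrt{t}$ (valid for $t\geq 1$) yields a bound of $t^{-k-3/(2q)}$, and the elementary identity $\frac{3}{2q} = \frac{3}{2}\cdot\frac{2-p}{p} = 3(\frac{1}{p}-\frac{1}{2})$ produces precisely the exponent $-\frac{3}{2}(\frac{1}{p}-\frac{1}{2}) - \frac{k}{2}$ after taking a square root. The case $p=2$ is immediate from Plancherel combined with the sup--bound $\sup_\xi |\xi|^{2k}\mathrm{e}^{-2\bar{\nu}|\xi|^2 t} \lesssim (1+t)^{-k}$.

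For $0 \leq t \leq 1$, the Gaussian factor is harmlessly bounded from below and above, so one obtains $\|\nabla^k \mathrm{e}^{t\mathcal{B}_1}\mathcal{U}(0)^l\|_{L^2} \lesssim \|\mathcal{U}(0)^l\|_{L^2}$ directly from Plancherel and the compact support of $\phi$, and the $L^2$ norm of $\mathcal{U}(0)^l$ is controlled by $\|\mathcal{U}(0)\|_{L^p}$ by a second application of Hausdorff--Young (using that $\|\phi\|_{L^r}$ is finite for any $r$). Gluing the small--time and large--time bounds converts the $t^{-\frac{3}{2}(\frac{1}{p}-\frac{1}{2})-\frac{k}{2}}$ factor into the desired $(1+t)^{-\frac{3}{2}(\frac{1}{p}-\frac{1}{2})-\frac{k}{2}}$.

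The only real obstacle is bookkeeping: verifying the algebraic identity linking the Hölder exponent $q$ to the target decay rate, and confirming that the implicit constant from Lemma \ref{lemma2.2} is uniform over $|\xi|\leq \eta$ (this is a direct consequence of the Taylor expansions in Lemma \ref{lemma2.1}, provided $\eta$ is chosen small enough that the $\mathcal{O}(|\xi|^3)$ corrections to the real parts of the eigenvalues are dominated by $\bar{\nu}|\xi|^2$). Once these two points are in place, the proof is essentially a standard computation.
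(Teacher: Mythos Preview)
Your proposal is correct and follows essentially the same route as the paper: Plancherel, the pointwise Gaussian bound from Lemma \ref{lemma2.2}, then H\"older plus Hausdorff--Young to trade $\|\widehat{\mathcal U}(0)\|_{L^{p'}}$ for $\|\mathcal U(0)\|_{L^p}$. The paper compresses the H\"older step, the Gaussian scaling, and the small--time/large--time splitting into a single line, whereas you spell each of these out; your exponent bookkeeping and the separate treatment of $p=2$ and $t\leq 1$ are all sound.
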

\begin{proof} Due to \eqref{2.20} and Plancherel theorem, we have
\begin{equation}\nonumber \begin{split}\|\nabla ^k\text{e}^{t\mathcal B_1}\mathcal U^l(0)\|_{L^2}^2=
&~\big{\|}|\xi|^k\text{e}^{t\mathcal{A}_1(\xi)}\widehat{\mathcal U}^l(0)\big{\|}_{L^2}^2
\\ \lesssim &\int_{|\xi|\le \eta} \text{e}^{-2\bar{\nu}|\xi|^2t}|\xi|^{2k}|\widehat{\mathcal U}^l(0)|^2\mathrm{d}\xi \\
\lesssim &~(1+t)^{-3\left(\frac{1}{p}-\frac{1}{2}\right)-k}\|\widehat {\mathcal U}^l(0)\|_{L^{q}}^2
\\
\lesssim &~(1+t)^{-3\left(\frac{1}{p}-\frac{1}{2}\right)-k}\| {\mathcal U}(0)\|_{L^{p}}^2,
\end{split}\end{equation}
where $q$ satisfies $\frac{1}{p}+\frac{1}{q}=1$. This implies
\eqref{2.24} immediately, and thus the proof of Proposition
\ref{Prop2.3}is completed.
\end{proof}

It should be mentioned that the $L^2$--convergence rates derived
above are optimal. Indeed, we have the lower--bound on the
convergence rates which is stated in the following Proposition \ref{Prop2.3}.

\begin{Proposition}\label{Prop2.4} Assume that $\left(\widehat{\theta},\widehat{\varphi^+}, \widehat{\varphi^-}\right)\in L^1$ satisfies \begin{equation}\widehat{\varphi^+_0}(\xi)=\widehat{\varphi^-_0}(\xi)=0 \quad \text{and} \quad |\widehat{{\theta}_0}(\xi)|\ge c_0,\label{2.25}\end{equation} for any $|\xi|\le\eta$. Then it holds that the global
solution $(\theta, \varphi^+,  \varphi^-)$ of the IVP \eqref{2.13}
satisfies
\begin{equation}\min\left\{\|\theta^l\|_{L^2},\|{\varphi}^{+,l}\|_{L^2},\|{\varphi}^{-,l}\|_{L^2}\right\}\gtrsim
c_0(1+t)^{-\frac{3}{4}}.\label{2.26}\end{equation}
for large enough $t$ .
\end{Proposition}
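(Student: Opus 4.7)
My plan is to explicitly extract the leading Gaussian--oscillatory low--frequency profile of each component from the semigroup formula \eqref{2.18} and then carry out a sharp $L^2$ lower--bound calculation on $\{|\xi|\le\eta\}$ via Plancherel's theorem. Because the hypothesis forces $\widehat{\varphi^+_0}\equiv\widehat{\varphi^-_0}\equiv 0$ on $\{|\xi|\le\eta\}$, only the first column of each projector $P_k(\xi)$ acts on the initial datum. Combining the explicit expressions \eqref{2.21}--\eqref{2.23} with the asymptotics \eqref{2.19}, and writing $a=\frac{\beta_1\beta_3\nu^++\beta_2\beta_4\nu^-}{2(\beta_1\beta_3+\beta_2\beta_4)}$, $b=\sqrt{\beta_1\beta_3+\beta_2\beta_4}$, I would obtain, uniformly for $|\xi|\le\eta$,
\begin{align*}
\widehat{\theta}(\xi,t)&=e^{-a|\xi|^2 t}\cos(b|\xi|t)\,\widehat{\theta_0}(\xi)+R_0(\xi,t),\\
\widehat{\varphi^+}(\xi,t)&=\tfrac{\beta_3}{b}\,e^{-a|\xi|^2 t}\sin(b|\xi|t)\,\widehat{\theta_0}(\xi)+R_+(\xi,t),\\
\widehat{\varphi^-}(\xi,t)&=\tfrac{\beta_4}{b}\,e^{-a|\xi|^2 t}\sin(b|\xi|t)\,\widehat{\theta_0}(\xi)+R_-(\xi,t),
\end{align*}
where the remainders satisfy $|R_*(\xi,t)|\lesssim|\xi|\,e^{-c_1|\xi|^2 t}|\widehat{\theta_0}(\xi)|$ for some $c_1>0$, after absorbing the $\mathcal O(|\xi|)$ corrections inside the projectors and the $\mathcal O(|\xi|^3)$ phase corrections inside $\lambda_{1,2,3}$ (shrinking $\eta$ if necessary).

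By Plancherel and the identities $2\cos^2=1+\cos 2(\cdot)$, $2\sin^2=1-\cos 2(\cdot)$, the square of the leading contribution to each of the three low--frequency norms reduces to $\tfrac12(G(t)\pm H(t))$, where
\[
G(t):=\int_{|\xi|\le\eta}e^{-2a|\xi|^2 t}|\widehat{\theta_0}(\xi)|^2\,d\xi,\qquad H(t):=\int_{|\xi|\le\eta}e^{-2a|\xi|^2 t}\cos(2b|\xi|t)|\widehat{\theta_0}(\xi)|^2\,d\xi.
\]
Using $|\widehat{\theta_0}|\ge c_0$ on $|\xi|\le\eta$ and a direct rescaling $\xi=\zeta/\sqrt{t}$, one immediately gets $G(t)\gtrsim c_0^{2}(1+t)^{-3/2}$. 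For $H(t)$, I would bound $|\widehat{\theta_0}|\le\|\widehat{\theta_0}\|_{L^\infty}<\infty$ (implicit via $\theta_0\in L^1$), pass to polar coordinates, and rescale $r=s/\sqrt{t}$; this reduces $H(t)$ to $t^{-3/2}$ times a cosine--Gaussian integral evaluated at the divergent radial frequency $2b\sqrt{t}$, which in closed form equals a Gaussian--times--polynomial in $\sqrt{t}$ with Gaussian factor $e^{-b^2 t/(2a)}$. Consequently $|H(t)|\lesssim(1+t)\,e^{-b^{2}t/(2a)}$, exponentially negligible against $(1+t)^{-3/2}$, so that $G(t)\pm H(t)\gtrsim\tfrac12 G(t)\gtrsim c_0^{2}(1+t)^{-3/2}$ for $t$ large.

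Finally, the remainder estimate $\|R_*\|_{L^2}^{2}\lesssim\int_{|\xi|\le\eta}|\xi|^{2}e^{-2c_1|\xi|^2 t}|\widehat{\theta_0}|^{2}d\xi\lesssim(1+t)^{-5/2}$ is of strictly lower order than the main $(1+t)^{-3/2}$ contribution, and the triangle inequality $\|A+R\|_{L^2}\ge\|A\|_{L^2}-\|R\|_{L^2}$ then delivers the three desired bounds
\[
\|\theta^l(t)\|_{L^2},\;\|\varphi^{+,l}(t)\|_{L^2},\;\|\varphi^{-,l}(t)\|_{L^2}\;\gtrsim\;c_0(1+t)^{-3/4}
\]
for all sufficiently large $t$. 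The main obstacle is quantifying the oscillatory cancellation inside $H(t)$: once the integral is rescaled to $\mathbb{R}^3$, the radial frequency $2b\sqrt{t}$ diverges and the explicit Gaussian cosine--transform identity provides the exponentially small bound, but one must additionally verify that the boundary tail $|\xi|>\eta$ missing after rescaling does not spoil this, which follows immediately from $e^{-2a\eta^{2}t}$ itself being exponentially small in $t$.
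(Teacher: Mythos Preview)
Your overall strategy coincides with the paper's: isolate the leading Gaussian--oscillatory profile from \eqref{2.18}--\eqref{2.23}, apply Plancherel, and use the double--angle identities to split the squared norm into a dominant Gaussian part and a subordinate oscillatory part. The paper carries this out for $\theta^l$ explicitly and says $\varphi^{\pm,l}$ are handled by the same procedure.

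There is, however, a logical gap in your treatment of $H(t)$. You propose to ``bound $|\widehat{\theta_0}|\le\|\widehat{\theta_0}\|_{L^\infty}$'' and then evaluate the resulting constant--weight cosine--Gaussian integral in closed form. But replacing the weight $|\widehat{\theta_0}(\xi)|^{2}$ by a constant inside an \emph{oscillatory} integral does not bound $|H(t)|$: the integrand $e^{-2a|\xi|^{2}t}\cos(2b|\xi|t)$ changes sign, so your explicit computation controls only $\int_{|\xi|\le\eta}e^{-2a|\xi|^{2}t}\cos(2b|\xi|t)\,d\xi$, not $H(t)$ itself. Without extra regularity on $\widehat{\theta_0}$ there is no direct way to estimate $|H(t)|$ separately.

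The fix is immediate and is precisely what the paper does: apply the hypothesis $|\widehat{\theta_0}|^{2}\ge c_0^{2}$ \emph{before} the double--angle split, while the integrand is still the nonnegative quantity $e^{-2a|\xi|^{2}t}\cos^{2}(b|\xi|t)|\widehat{\theta_0}|^{2}$ (respectively $\sin^{2}$ for $\varphi^{\pm}$). This gives
\[
G(t)\pm H(t)\;\ge\;c_0^{2}\bigl(G_0(t)\pm H_0(t)\bigr),\qquad G_0(t)=\!\int_{|\xi|\le\eta}\!e^{-2a|\xi|^{2}t}d\xi,\quad H_0(t)=\!\int_{|\xi|\le\eta}\!e^{-2a|\xi|^{2}t}\cos(2b|\xi|t)\,d\xi,
\]
and now the weight is constant, so your closed--form exponential bound on $H_0$ is legitimate. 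In fact this is sharper than the paper, which keeps the $\mathcal O(|\xi|^{3})t$ phase correction inside the cosine and only claims $|H_0(t)|\lesssim(1+t)^{-2}$ via one integration by parts after the rescaling $r=s/\sqrt{t}$. Either bound on $H_0$ suffices, and your remainder estimate $\|R_*\|_{L^2}^2\lesssim(1+t)^{-5/2}$ is correct.
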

\begin{proof} Let $\bar{\nu}_1=\frac{\beta_1\beta_3\nu^++\beta_2\beta_4\nu^-}{2(\beta_1\beta_3+\beta_2\beta_4)}>0$. Due to \eqref{2.25},
it follows from \eqref{2.18}--\eqref{2.19} and
\eqref{2.21}--\eqref{2.23} that
\begin{equation}\begin{split}\nonumber\widehat{\theta}^{l}\sim \text{e}^{-\bar{\nu}_1|\xi|^2t}\cos\left(\sqrt{\beta_1\beta_3+\beta_2\beta_4}|\xi|t+\mathcal O(|\xi|^3)t\right)\widehat {\theta}_0^{l}\end{split}\end{equation}
This together with Plancherel theorem and the double angle formula
gives that
\begin{equation}\label{2.27}\begin{split}\|{\theta}^{l}\|_{L^2}^2=&~\|\widehat{\theta^l}\|_{L^2}^2\\
\ge
&~\frac{c_0^2}{2}\int_{|\xi|\le \eta}\text{e}^{-2\bar{\nu}_1|\xi|^2t}\cos^2
\left(\sqrt{\beta_1\beta_3+\beta_2\beta_4}|\xi|t+\mathcal O(|\xi|^3)t\right)\mathrm{d}\xi\\
=&~\frac{c_0^2}{4}\int_{|\xi|\le
\eta}\text{e}^{-2\bar{\nu}_1|\xi|^2t}{d}\xi+\frac{c_0^2}{4}\int_{|\xi|\le
\eta}\text{e}^{-\bar{\nu}_1|\xi|^2t}\cos
\left(2\sqrt{\beta_1\beta_3+\beta_2\beta_4}|\xi|t+\mathcal
O(|\xi|^3)t\right)\mathrm{d}\xi
\\ \ge &~\frac{c_0^2}{4}(1+t)^{-\frac{3}{2}}-Cc_0^2(1+t)^{-\frac{3}{2}}(1+t)^{-\frac{1}{2}}
 \\~ \gtrsim &~c_0^2(1+t)^{-\frac{3}{2}},\end{split}\end{equation}
 if $t$ is large enough.
 Using a similar procedure as in \eqref{2.27} to handle $\|(\varphi^{+,l},\varphi^{-,l})\|_{L^2}$, one has \eqref{2.26}. Therefore, we have completed the proof of
 Proposition \ref{Prop2.4}.
\end{proof}

\smallskip
\subsection{Spectral analysis for IVP \eqref{2.12-1}}

From the classic theory of the heat equation, it is clear that the
solution $\phi^\pm$  to the IVP \eqref{2.12-1}  satisfies the
following decay estimate.
\begin{Proposition}[$L^2$--theory]\label{Prop2.5} Let $k\ge 0$ and   $1\le p \le 2$, it holds that
\begin{equation}\|\nabla ^k\text{e}^{-\nu^\pm_1\Lambda^2 t}\mathcal \phi^{\pm,l}(0)\|_{L^2}\lesssim(1+t)^{-\frac{3}{2}\left(\frac{1}{p}-\frac{1}{2}\right)-\frac{k}{2}}\| {\phi^{\pm}}(0)\|_{L^{p}},\label{2.28}\end{equation}
 for any $t\geq 0$.
\end{Proposition}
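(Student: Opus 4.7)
The plan is to follow exactly the Fourier-analytic strategy used in the proof of Proposition \ref{Prop2.3}, but the argument here is noticeably simpler because the IVP \eqref{2.12-1} is a pure heat equation: the Fourier symbol of the semigroup is already the Gaussian $\mathrm{e}^{-\nu_1^\pm|\xi|^2 t}$, so no spectral/Taylor expansion of eigenvalues is needed.

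First, I would apply Plancherel's identity and the definition of the low-frequency cutoff to rewrite
\begin{equation}\nonumber
\|\nabla^k \mathrm{e}^{-\nu_1^\pm \Lambda^2 t}\phi^{\pm,l}(0)\|_{L^2}^2
= \int_{|\xi|\leq \eta} |\xi|^{2k}\,\mathrm{e}^{-2\nu_1^\pm|\xi|^2 t}\,|\widehat{\phi^\pm_0}(\xi)|^2\,\mathrm{d}\xi.
\end{equation}
Second, let $q$ denote the conjugate exponent of $p$, i.e.\ $1/p+1/q=1$, and let $r=q/(q-2)$ be the H\"older conjugate of $q/2$ (with the obvious modification $r=\infty$ when $p=2$, in which case one simply uses the uniform bound $\mathrm{e}^{-2\nu_1^\pm|\xi|^2 t}\leq 1$). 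Applying H\"older's inequality in the above integral with exponents $q/2$ and $r$ gives
\begin{equation}\nonumber
\|\nabla^k \mathrm{e}^{-\nu_1^\pm \Lambda^2 t}\phi^{\pm,l}(0)\|_{L^2}^2
\lesssim \|\widehat{\phi^\pm_0}\|_{L^q}^2\;\Bigl(\int_{|\xi|\leq \eta} |\xi|^{2kr}\,\mathrm{e}^{-2\nu_1^\pm r|\xi|^2 t}\,\mathrm{d}\xi\Bigr)^{1/r}.
\end{equation}

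Third, I would invoke the Hausdorff--Young inequality $\|\widehat{\phi^\pm_0}\|_{L^q}\leq \|\phi^\pm_0\|_{L^p}$, and estimate the remaining Gaussian integral by the scaling $\xi\mapsto \xi/\sqrt{t}$, which yields the bound $t^{-3/(2r)-k}$ for $t\geq 1$; for $t\in [0,1]$ one uses that the integrand is bounded on $\{|\xi|\leq \eta\}$ by a constant depending only on $\eta$. Combining these into $(1+t)^{-3(1/p-1/2)-2k}$ for the squared norm and taking square roots gives exactly \eqref{2.28}. There is no genuine obstacle here: the proof is essentially a verbatim repetition of Proposition \ref{Prop2.3} with the $3\times 3$ matrix semigroup $\mathrm{e}^{t\mathcal{A}_1(\xi)}$ replaced by the scalar Gaussian multiplier, and indeed the authors are most likely to state that \eqref{2.28} follows by classical heat-kernel estimates and omit further details.
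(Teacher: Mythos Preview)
Your proposal is correct and, as you anticipated, the paper gives no proof of Proposition \ref{Prop2.5} at all: it simply remarks that the estimate follows ``from the classic theory of the heat equation'' and states the result. Your argument is exactly the natural one (and mirrors the paper's own proof of Proposition \ref{Prop2.3}); the only slip is that the decay exponent for the \emph{squared} norm should be $-3(1/p-1/2)-k$ rather than $-3(1/p-1/2)-2k$, after which the square root gives the stated rate $-\tfrac{3}{2}(1/p-1/2)-\tfrac{k}{2}$.
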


\smallskip
\subsection{$L^2$ decay estimates for IVP \eqref{2.14}}

By virtue of the definition of $\varphi^{\pm}$ and $\phi^{\pm}$, and the
fact that the relations
$${u}^{\pm}=-\wedge^{-1}\nabla\varphi^{\pm}-\wedge^{-1}\text{div}\phi^{\pm}$$
involves pseudo--differential operators of degree zero, the
estimates in space $H^k(\mathbb R^3)$ for the original function
${u}^{\pm}$ will be the same as for $(\varphi^{\pm}, \phi^{\pm})$.
Combining Propositions \ref{Prop2.3}, \ref{Prop2.4} and
\ref{Prop2.5}, we have the following result concerning long time
properties for the solution semigroup $\text{e}^{-t\mathcal{A}}$.
\begin{Proposition}\label{Prop2.6} Let $k\ge 0$ and  $1\le p \le 2$. Assume
that the initial data $U_0\in L^p(\mathbb R^3)$,   then for any
$t\ge 0$, the global solution ${U}=(\theta, {u}^+, {u}^-)^t$ of the
IVP \eqref{2.12} satisfies
\begin{equation}\|\nabla ^k\text{e}^{t\mathcal{B}}{U}^l(0)\|_{L^2}\leq
C(1+t)^{-\frac{3}{2}\left(\frac{1}{p}-\frac{1}{2}\right)-\frac{k}{2}}\| {U}(0)\|_{L^{p}}.\label{2.29}\end{equation}
If additionally the initial
data satisfies \eqref{2.25}, we also have the following lower--bound
on convergence rate
\begin{equation}\min\left\{\|\theta^l(t)\|_{L^2},\|{u}^{+,l}(t)\|_{L^2},\|{u}^{-,l}(t)\|_{L^2}\right\}\geq C_1c_0(1+t)^{-\frac{3}{4}}\label{2.30}
\end{equation}
 if $t$ large enough.
\end{Proposition}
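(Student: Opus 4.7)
The plan is to exploit the Hodge decomposition that has already been installed in the paper, which converts the semigroup $e^{t\mathcal{B}}$ for the $7\times 7$ linear system \eqref{2.12} into (i) the semigroup $e^{t\mathcal{B}_1}$ for the $3\times 3$ coupled scalar system \eqref{2.13} governing $(\theta,\varphi^+,\varphi^-)$, and (ii) the pure heat semigroup $e^{-\nu_1^{\pm}\Lambda^2 t}$ for the transverse parts $\phi^{\pm}$ in \eqref{2.12-1}. Since the reconstruction
\[
u^{\pm}=-\Lambda^{-1}\nabla\varphi^{\pm}-\Lambda^{-1}\mathrm{div}\,\phi^{\pm}
\]
only involves Fourier multipliers of degree zero, one has the pointwise Fourier-side inequalities $|\widehat{\varphi^{\pm}}(\xi)|\le|\widehat{u^{\pm}}(\xi)|$, $|\widehat{\phi^{\pm}}(\xi)|\lesssim|\widehat{u^{\pm}}(\xi)|$, and conversely $|\widehat{u^{\pm}}(\xi)|\lesssim|\widehat{\varphi^{\pm}}(\xi)|+|\widehat{\phi^{\pm}}(\xi)|$. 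This is the key book-keeping device that links the reduced systems back to $U=(\theta,u^+,u^-)^t$.

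For the upper bound \eqref{2.29}, I would apply Proposition \ref{Prop2.3} to $(\theta,\varphi^+,\varphi^-)$ and Proposition \ref{Prop2.5} to $\phi^{\pm}$, both with the same exponent pair $(k,p)$. To express the output in terms of $\|U(0)\|_{L^p}$ rather than the reduced-variable norm, I would proceed on the Fourier side: the pointwise bound $|\widehat{(\theta_0,\varphi_0^{\pm},\phi_0^{\pm})}(\xi)|\lesssim|\widehat{U_0}(\xi)|$ combined with Hausdorff--Young $\|\widehat{U_0}\|_{L^{q}}\le\|U_0\|_{L^p}$ ($1/p+1/q=1$) sidesteps the failure of Riesz transforms on $L^1$ at the endpoint $p=1$. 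Summing the estimates for the compressible and incompressible pieces, using $|\widehat{u^{\pm}}|\lesssim|\widehat{\varphi^{\pm}}|+|\widehat{\phi^{\pm}}|$ and Plancherel, then yields \eqref{2.29}.

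For the lower bound \eqref{2.30}, the hypothesis \eqref{2.25} is tailored exactly to Proposition \ref{Prop2.4}, which immediately delivers
\[
\min\bigl\{\|\theta^l(t)\|_{L^2},\|\varphi^{+,l}(t)\|_{L^2},\|\varphi^{-,l}(t)\|_{L^2}\bigr\}\gtrsim c_0(1+t)^{-\frac{3}{4}}
\]
for $t$ sufficiently large. The bound on $\theta^l$ is already what is wanted. For $u^{\pm,l}$, the pointwise inequality $|\widehat{\varphi^{\pm}}(\xi)|\le|\widehat{u^{\pm}}(\xi)|$ combined with Plancherel gives $\|\varphi^{\pm,l}(t)\|_{L^2}\le\|u^{\pm,l}(t)\|_{L^2}$, which transfers the lower bound from $\varphi^{\pm,l}$ to $u^{\pm,l}$ with no loss.

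The main thing to be careful about is the reduction of the initial-data norm: the naive route through physical-space Riesz transform estimates fails at $p=1$, so the argument must stay on the Fourier side throughout, where the symbols $\xi_j/|\xi|$ and $|\xi|^{-1}\xi_j\otimes\xi_k-\delta_{jk}$ are pointwise bounded and no Calder\'on--Zygmund theory is needed. Beyond that point, both \eqref{2.29} and \eqref{2.30} are essentially immediate consequences of the linear spectral estimates already in Propositions \ref{Prop2.3}--\ref{Prop2.5}, so I do not foresee any further obstruction.
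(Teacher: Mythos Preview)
Your proposal is correct and follows essentially the same route as the paper: the paper simply invokes the Hodge decomposition $u^{\pm}=-\Lambda^{-1}\nabla\varphi^{\pm}-\Lambda^{-1}\mathrm{div}\,\phi^{\pm}$, observes that the reconstruction operators are degree-zero Fourier multipliers, and then states that Proposition~\ref{Prop2.6} follows by combining Propositions~\ref{Prop2.3}, \ref{Prop2.4}, and \ref{Prop2.5}. Your write-up is in fact more careful than the paper's, particularly in flagging the $p=1$ endpoint issue and keeping the argument on the Fourier side via the pointwise bounds $|\widehat{\varphi^{\pm}}|,|\widehat{\phi^{\pm}}|\lesssim|\widehat{u^{\pm}}|$ and Hausdorff--Young, but the underlying strategy is identical.
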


\bigskip

\section{\leftline {\bf{Proof of Theorem \ref{1mainth}.}}}
\setcounter{equation}{0}

By a classic argument, the global existence of solutions will be
obtained by combining the local existence result with a priori
estimates. Since the local classical solutions can be proved by a
standard argument of the Lax--Milgram theorem and
Schauder--Tychonoff fixed--point theorem as in \cite{Mat1,Mat2}
whose details are omitted, global existence of classical solutions
will follow in a standard continuity argument after we have
established a priori estimates. Therefore, we assume a priori that
\begin{equation}\label{3.1}\|\left(n^+,\theta,u^+,n^-,u^-\right)\|_{H^3}\le \delta\ll 1,
\end{equation}
here $\delta\sim\delta_0$ is small enough. This together with
Sobolev inequality in \cite{Evans, Ziemer} particularly implies that
\begin{equation}\label{3.2}\|\left(n^+,\theta,u^+,n^-,u^-\right)\|_{W^{1,\infty}}\lesssim \delta.
\end{equation}

In what follows, a series of lemmas on the energy estimates are
given. First, the zero--order energy estimate of
$\left(\theta,u^+,u^-\right)$ is obtained in the following lemma.
\smallskip
\begin{Lemma}\label{Lemma3.1}Assume that the notations and hypotheses  of Theorem \ref{1mainth} and \eqref{3.1}  are in force, then
\begin{equation}\label{3.3}\begin{split}\frac{1}{2}\frac{\rm{d}}{{\rm{d}}t}&\left\{\left|\left|\theta\right|\right|^2_{L^2}+\frac{\beta_1}{\beta_3}\left|\left|u^+\right|\right|^2_{L^2}
+\frac{\beta_2}{\beta_4}\left|\left|u^-\right|\right|^2_{L^2}\right\}
+\frac{3\beta_1}{4\beta_3}\left(\nu_1^+\left|\left|\nabla u^+\right|\right|^2_{L^2}+\nu_2^+\left|\left|\text{\rm div}u^+\right|\right|^2_{L^2}\right)\\
&\quad\hspace{0.86cm}+\frac{3\beta_2}{4\beta_4}\left(\nu_1^-\left|\left|\nabla u^-\right|\right|^2_{L^2}+\nu_2^-\left|\left|\text{\rm div}u^-\right|\right|^2_{L^2}\right)\le C\delta\left|\left|\nabla \theta\right|\right|^2_{L^2}.
\end{split}\end{equation}
\end{Lemma}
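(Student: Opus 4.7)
The natural plan is the standard basic energy argument: test each of the three dissipative equations in \eqref{2.1} by the variable itself with appropriately chosen weights. Specifically, I would multiply $\eqref{2.1}_2$ by $\theta$, $\eqref{2.1}_3$ by $\frac{\beta_1}{\beta_3}u^+$, and $\eqref{2.1}_4$ by $\frac{\beta_2}{\beta_4}u^-$, then integrate over $\mathbb{R}^3$ and sum. The choice of the weights $\beta_1/\beta_3$ and $\beta_2/\beta_4$ is forced by the cross-coupling: after integration by parts, the pressure terms $\beta_1\int\theta\,\text{div}\,u^+$ from the second equation and $\beta_1\int\nabla\theta\cdot u^+$ from the third cancel exactly, and similarly for the $u^-$ pair. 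The viscous terms on $\eqref{2.1}_3$--$\eqref{2.1}_4$ produce, after integration by parts, the full dissipation $\tfrac{\beta_1}{\beta_3}(\nu_1^+\|\nabla u^+\|^2+\nu_2^+\|\text{div}\,u^+\|^2)$ and its minus counterpart, of which I will keep $3/4$ on the left and reserve $1/4$ to absorb perturbative terms from the right-hand side.

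The remaining work is then to show that $\int F_1\theta+\tfrac{\beta_1}{\beta_3}\int F_2\cdot u^++\tfrac{\beta_2}{\beta_4}\int F_3\cdot u^-$ is bounded by $C\delta\|\nabla\theta\|_{L^2}^2$ plus terms that can be absorbed into the dissipation. For the terms appearing in $F_2$ and $F_3$ this is routine under the a priori assumption \eqref{3.1}--\eqref{3.2}: the convective terms $u^\pm\cdot\nabla u^\pm$ give $\|u^\pm\|_{L^\infty}\|\nabla u^\pm\|\|u^\pm\|\lesssim\delta\|\nabla u^\pm\|^2$; the perturbative pressure contributions $g_2^\pm\nabla\theta$ are handled either directly by Cauchy--Schwarz with the $L^\infty$ smallness of $g_2^\pm$ (producing $\delta\|\nabla\theta\|_{L^2}^2+\delta\|u^\pm\|_{L^2}^2$, the latter absorbed by $\delta$) or after integration by parts onto $u^\pm$; the perturbative viscous terms $g_3^\pm\Delta u^\pm$, $g_4^\pm\nabla\text{div}\,u^\pm$ and the $\nabla\alpha^\pm$ terms are integrated by parts and controlled by $\|g_i^\pm\|_{L^\infty}\|\nabla u^\pm\|_{L^2}^2\lesssim\delta\|\nabla u^\pm\|_{L^2}^2$, which is absorbed by the $1/4$ of the dissipation kept in reserve.

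The genuine obstacle sits in $F_1$, where the two coupling terms $-\mathcal{C}\rho^- u^+\!\cdot\!\nabla n^+$ and $-\mathcal{C}\rho^+ u^-\!\cdot\!\nabla n^-$ feature the non-dissipative variables $n^\pm$ with no available decay; the obvious Cauchy--Schwarz estimate would produce $\|\nabla n^\pm\|_{L^2}\|u^\pm\|_{L^2}\|\theta\|_{L^\infty}$-type bounds that are not absorbable here. This is precisely where the key identity \eqref{1.24} intervenes: using the relation $\rho^-s_-^2-\rho^+s_+^2\neq 0$ together with $\alpha^++\alpha^-=1$, the sum of these two terms can be written as $\text{div}\,\mathcal R_1+\text{(good terms)}$ with $\mathcal R_1$ quadratic (and small) in $(u^+-u^-,R^+-\bar R^+)$. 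After substituting into $\int F_1\theta$, an integration by parts shifts the divergence onto $\theta$, yielding a bound of the form $\|\mathcal R_1\|_{L^2}\|\nabla\theta\|_{L^2}\lesssim\delta(\|u^\pm\|_{L^2}+\|n^+\|_{L^2})\|\nabla\theta\|_{L^2}\lesssim\delta\|\nabla\theta\|_{L^2}^2$ by Cauchy--Schwarz with the smallness \eqref{3.2}. The remaining contributions from $F_1$ (the $g_1^\pm\,\text{div}\,u^\pm$ terms and the ``good terms'' from \eqref{1.24}) are standard perturbative estimates absorbed into the $1/4$ reserve of the dissipation. Combining all these bounds produces exactly \eqref{3.3}.
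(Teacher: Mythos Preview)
Your overall framework---multiply $\eqref{2.1}_2$ by $\theta$, $\eqref{2.1}_3$ by $\tfrac{\beta_1}{\beta_3}u^+$, $\eqref{2.1}_4$ by $\tfrac{\beta_2}{\beta_4}u^-$, integrate, and let the pressure cross-terms cancel---matches the paper exactly, and your handling of $I_2,I_3$ is essentially the same as \eqref{3.6}--\eqref{3.7}.

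Where you diverge from the paper is in the $F_1$ term, and here you are working much harder than necessary. The paper does \emph{not} invoke the identity \eqref{1.24} at this stage; that identity is reserved for the Duhamel estimate \eqref{3.31}, where one genuinely needs an $L^1$ (or $L^{3/2}$) bound on the nonlinearity against a decaying kernel. For the basic energy estimate, a single integration by parts on $\nabla n^{\pm}$ suffices:
\[
\int_{\mathbb{R}^3}\mathcal{C}\rho^{\mp}u^{\pm}\!\cdot\!\nabla n^{\pm}\,\theta\,\mathrm{d}x
=-\int_{\mathbb{R}^3} n^{\pm}\bigl[\mathcal{C}\rho^{\mp}u^{\pm}\!\cdot\!\nabla\theta
+\mathcal{C}\rho^{\mp}\theta\,\mathrm{div}\,u^{\pm}
+\theta\,u^{\pm}\!\cdot\!\nabla(\mathcal{C}\rho^{\mp})\bigr]\mathrm{d}x,
\]
and each resulting term is controlled by $L^3\times L^6\times L^2$ H\"older plus Sobolev ($\|n^{\pm}\|_{L^3}\lesssim\delta$, $\|u^{\pm}\|_{L^6}\lesssim\|\nabla u^{\pm}\|_{L^2}$, $\|\theta\|_{L^6}\lesssim\|\nabla\theta\|_{L^2}$), giving exactly $\delta\|\nabla(\theta,u^+,u^-)\|_{L^2}^2$ as in \eqref{3.5}. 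So the ``genuine obstacle'' you identify dissolves under one integration by parts; no structural identity is needed.

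There is also a small gap in your version: the chain $\|\mathcal R_1\|_{L^2}\|\nabla\theta\|_{L^2}\lesssim\delta(\|u^{\pm}\|_{L^2}+\|n^+\|_{L^2})\|\nabla\theta\|_{L^2}\lesssim\delta\|\nabla\theta\|_{L^2}^2$ does not hold as written, since $\|u^{\pm}\|_{L^2}$ and $\|n^+\|_{L^2}$ are not dominated by $\|\nabla\theta\|_{L^2}$. To make your route work you would need to place $u^{\pm}$ in $L^6$ (gaining $\|\nabla u^{\pm}\|_{L^2}$, which \emph{is} absorbable) and the logarithmic factor in $L^3$; this is exactly what the paper's simpler integration-by-parts already achieves without detouring through \eqref{1.24}.
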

\begin{proof}Multiplying \eqref{2.1}$_2$, \eqref{2.1}$_3$, \eqref{2.1}$_4$ by $\theta$, $\frac{\beta_1}{\beta_3}u^+$, $\frac{\beta_2}{\beta_4}u^-$, respectively, summing up and then integrating
the resultant equation over $\mathbb R^3$, we have
\begin{align}\begin{split}\label{3.4}
&\frac{1}{2}\frac{\rm{d}}{{\rm{d}}t}\left\{\left|\left|\theta\right|\right|^2_{L^2}+\frac{\beta_1}{\beta_3}\left|\left|u^+\right|\right|^2_{L^2}
+\frac{\beta_2}{\beta_4}\left|\left|u^-\right|\right|^2_{L^2}\right\}\\
&\quad+\frac{\beta_1}{\beta_3}\left(\nu_1^+\left|\left|\nabla u^+\right|\right|^2_{L^2}+\nu_2^+\left|\left|\text{\rm div}u^+\right|\right|^2_{L^2}\right)+\frac{\beta_2}{\beta_4}\left(\nu_1^-\left|\left|\nabla u^-\right|\right|^2_{L^2}+\nu_2^-\left|\left|\text{\rm div}u^-\right|\right|^2_{L^2}\right)\\
&=\displaystyle\int_{\mathbb{R}^3}\left(
{{F}}_{1}\theta+\frac{\beta_1}{\beta_3} {F}_{2}u^{+}+
\frac{\beta_2}{\beta_4}F_3u^{-}\right)\mathrm{d}x \\
&=:I_{1}+I_{2}+I_{3}.
\end{split}\end{align}
Using  \eqref{3.1}, integration by parts, Lemma
\ref{1interpolation}, H\"{o}lder inequality, Sobolev inequality and
Young inequality, we have
\begin{align}\begin{split}\label{3.5}
|I_1|&\lesssim \displaystyle \int_{\mathbb{R}^3}|(n^+,
\theta)|\cdot|\nabla(u^+,
u^-)|\cdot|\theta|\mathrm{d}x\\
&\quad\displaystyle+\int_{\mathbb{R}^3}|\mathcal{C}\rho^\mp||n^{\pm}||u^\pm||\nabla
\theta|+|\mathcal{C}\rho^\mp||n^{\pm}||\hbox{div}u^\pm||\theta|
+|\nabla(\mathcal{C}\rho^\mp)||n^{\pm}||u^\pm||\theta|\mathrm{d}x\\
&\lesssim \left|\left|(n^+,
\theta)\right|\right|_{L^3}\left|\left|\nabla(u^+,
u^-)\right|\right|_{L^2}\left|\left|\theta\right|\right|_{L^6}+\left|\left|\nabla
\theta\right|\right|_{L^2}\left|\left|(u^+,
u^-)\right|\right|_{L^6}\left|\left|(n^+,
n^-)\right|\right|_{L^3}\\&\quad+\displaystyle \left|\left|
\theta\right|\right|_{L^6}\left|\left|(\hbox{div}u^+,
\hbox{div}u^-)\right|\right|_{L^2}\left|\left|(n^+,
n^-)\right|\right|_{L^3}
\\&\quad+\displaystyle \left|\left|
\theta\right|\right|_{L^6}\left|\left|(u^+,
u^-)\right|\right|_{L^6}\left|\left|(n^+,
n^-)\right|\right|_{L^3}\left|\left|\nabla(n^+,
\theta)\right|\right|_{L^3}
\\
&\lesssim \delta\left|\left|\nabla(\theta, u^+, u^-)\right|\right|_{L^2}^2.
\end{split}\end{align}

\noindent For the term $I_2$, by employing integration by parts,
H\"{o}lder inequality, Sobolev inequality and Young inequality, we
obtain
\begin{align}\begin{split}\label{3.6}
|I_2|&\lesssim \displaystyle \int_{\mathbb{R}^3}\left(|u^+||\nabla u^+|+|\theta||\nabla \theta|+
|\nabla n^+||\nabla u^+|+|\nabla \theta||\nabla u^+|\right)|u^+|+|\theta||\nabla u^+|^2{\mathrm{d}x}\\
&\lesssim \left|\left|u^+\right|\right|_{L^6}\left|\left|\nabla
u^+\right|\right|_{L^2}\left|\left|u^+\right|\right|_{L^3}+
\left|\left|\theta\right|\right|_{L^6}\left|\left|\nabla \theta\right|\right|_{L^2}\left|\left|u^+\right|\right|_{L^3}+\left|\left|\nabla n^+\right|\right|_{L^3}\left|\left|\nabla u^+\right|\right|_{L^2}\left|\left|u^+\right|\right|_{L^6}\\
&\quad+\left|\left|\nabla\theta\right|\right|_{L^2}\left|\left|\nabla u^+\right|\right|_{L^2}\left|\left|u^+\right|\right|_{L^\infty}+\left|\left|\theta\right|\right|_{L^\infty}\left|\left|\nabla u^+\right|\right|_{L^2}^2\\
&\lesssim \delta\left|\left|\nabla(\theta, u^+)\right|\right|_{L^2}^2.
\end{split}\end{align}
Similarly, we have
\begin{equation}\label{3.7}|I_3|\lesssim \delta\left|\left|\nabla(\theta, u^-)\right|\right|_{L^2}^2.\end{equation}
Substituting \eqref{3.5}--\eqref{3.7} into \eqref{3.4} and using the
smallness of $\delta$ yield \eqref{3.3} immediately. Therefore, we
complete the proof of Lemma \ref{Lemma3.1}.
\end{proof}
Next, we deduce energy estimates on the first and second order
spatial derivatives of $\left(\theta,u^+,u^-\right)$, which are
stated in the following lemma.
\begin{Lemma}\label{Lemma3.2}Assume that the notations and hypotheses  of Theorem \ref{1mainth} and \eqref{3.1}  are in force, then
\begin{equation}\label{3.8}\begin{split}\frac{1}{2}\frac{\rm{d}}{{\rm{d}}t}&\left\{\left|\left|\nabla\theta\right|\right|^2_{H^1}
+\frac{\beta_1}{\beta_3}
\left|\left|\nabla u^+\right|\right|^2_{H^1}
+\frac{\beta_2}{\beta_4}\left|\left|\nabla u^-\right|\right|^2_{H^1}\right\}
+\frac{3\beta_1}{4\beta_3}\left(\nu_1^+\left|\left|\nabla^2 u^+\right|\right|^2_{H^1}+\nu_2^+\left|\left|\nabla\text{\rm div}u^+\right|\right|^2_{H^1}\right)\\
&\quad\hspace{0.86cm}+\frac{3\beta_2}{4\beta_4}\left(\nu_1^-\left|\left|\nabla^2 u^-\right|\right|^2_{H^1}+\nu_2^-\left|\left|\nabla\text{\rm div}u^-\right|\right|^2_{H^1}\right)\le C\delta\left(\left|\left|\nabla\theta\right|\right|_{H^1}^2+\left|\left|\nabla(u^+,u^-)\right|\right|_{H^2}^2\right).
\end{split}\end{equation}
\end{Lemma}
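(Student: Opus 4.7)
The plan is to extend the $L^2$ energy identity of Lemma \ref{Lemma3.1} to the first and second spatial derivatives. For each $k=1,2$, I would apply $\nabla^k$ to equations $\eqref{2.1}_2$--$\eqref{2.1}_4$, then multiply the resulting equations by $\nabla^k\theta$, $\frac{\beta_1}{\beta_3}\nabla^k u^+$, and $\frac{\beta_2}{\beta_4}\nabla^k u^-$ respectively, sum, and integrate over $\mathbb{R}^3$. The weights are chosen exactly as in Lemma \ref{Lemma3.1}: since $\beta_1/\beta_3=\beta_2/\beta_4=\mathcal{C}(1,\bar{P})\rho^+(\bar{P})\rho^-(\bar{P})$, integration by parts on the linear couplings makes the pressure--gradient contribution $\beta_{3,4}\nabla^{k+1}\theta$ in the momentum equations cancel against the divergence terms $\beta_{1,2}\nabla^{k}\operatorname{div}u^{\pm}$ in the pressure equation. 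After summing both values of $k$, the left--hand side is precisely the quantity on the left of \eqref{3.8}, and a small portion of the viscous dissipation will be reserved for absorbing the error terms generated on the right.

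The right--hand side collapses to $\sum_{k=1,2}\bigl[\int\nabla^k F_1\cdot\nabla^k\theta+\tfrac{\beta_1}{\beta_3}\int\nabla^k F_2\cdot\nabla^k u^+ +\tfrac{\beta_2}{\beta_4}\int\nabla^k F_3\cdot\nabla^k u^-\bigr]\mathrm{d}x$. For the contributions coming from $F_2$ and $F_3$, the approach mimics Lemma \ref{Lemma3.1}: expand $\nabla^k$ via the Leibniz rule, observe that $g_i^\pm$ and their derivatives are $O(\delta)$ thanks to \eqref{3.1}--\eqref{3.2} together with $H^2(\mathbb{R}^3)\hookrightarrow L^\infty$, and then split each triple product by H\"{o}lder in the forms $L^\infty\times L^2\times L^2$ or $L^6\times L^3\times L^2$, estimating $L^6$--norms through $\|\nabla\cdot\|_{L^2}$ and $L^3$--norms through the Gagliardo--Nirenberg interpolation $\|f\|_{L^3}\lesssim\|f\|_{L^2}^{1/2}\|\nabla f\|_{L^2}^{1/2}$. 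Each such term reduces to $\delta$ times a product of two norms that already appear on the right of \eqref{3.8}, and Young's inequality closes them.

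The main obstacle will be the strongly coupled terms $-\mathcal{C}\rho^{\mp}u^{\pm}\cdot\nabla n^{\pm}$ inside $F_1$, because the non--dissipative variable $n^{\pm}$ sits at the edge of the available regularity. At the $k=2$ level, the worst piece produced by the Leibniz expansion of $\nabla^2(\mathcal{C}\rho^{\mp}u^{\pm}\cdot\nabla n^{\pm})$ is
\[
\int_{\mathbb{R}^3}\mathcal{C}\rho^{\mp}\,u^{\pm}\cdot\nabla^{3}n^{\pm}\cdot\nabla^{2}\theta\,\mathrm{d}x.
\]
I would integrate by parts to peel one derivative off $\nabla^3 n^{\pm}$; the divergence structure of the $u^{\pm}\cdot\nabla$ operator makes this natural and leaves commutator pieces of the form $\int\operatorname{div}(\mathcal{C}\rho^{\mp}u^{\pm})\,\nabla^2 n^{\pm}\cdot\nabla^2\theta\,\mathrm{d}x$ as well as the intermediate--order pieces $\int\nabla u^{\pm}\cdot\nabla^2 n^{\pm}\cdot\nabla^2\theta\,\mathrm{d}x$ and $\int\nabla^2 u^{\pm}\cdot\nabla n^{\pm}\cdot\nabla^2\theta\,\mathrm{d}x$. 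Each of these I would bound by H\"{o}lder in the form $L^6\times L^3\times L^2$ combined with the interpolation $\|\nabla^2 n^{\pm}\|_{L^3}\lesssim\|\nabla^2 n^{\pm}\|_{L^2}^{1/2}\|\nabla^3 n^{\pm}\|_{L^2}^{1/2}\lesssim\delta$ from the a priori $H^3$--bound \eqref{3.1}, and the Sobolev embedding $\|\nabla u^{\pm}\|_{L^6}\lesssim\|\nabla^2 u^{\pm}\|_{L^2}$. The decisive feature of \eqref{3.8} is that its right--hand side retains the full norm $\|\nabla(u^+,u^-)\|_{H^2}^2$, including $\|\nabla^3 u^{\pm}\|_{L^2}^2$, which is precisely what allows the resulting cross products $\delta\|\nabla^{2}u^{\pm}\|_{L^2}\|\nabla^{2}\theta\|_{L^2}$ and $\delta\|\nabla^{3}u^{\pm}\|_{L^2}\|\nabla^{2}\theta\|_{L^2}$ to be absorbed via Young's inequality. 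The parallel computation at $k=1$ is easier since only $\nabla^2 n^{\pm}$ appears, bounded directly in $L^2$ by $\delta$.
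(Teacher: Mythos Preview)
Your overall strategy coincides with the paper's: apply $\nabla^k$ for $k=1,2$, test against the same weighted unknowns, and control the three source integrals $J_1^k,J_2^k,J_3^k$. Your sketch for the $F_2,F_3$ contributions is equivalent to what the paper does (the paper integrates by parts once, writing $\int\nabla^kF_{2,3}\cdot\nabla^ku^\pm=-\int\nabla^{k-1}F_{2,3}\cdot\nabla^{k+1}u^\pm$ before applying product estimates, but this is only a repackaging of the same H\"older/Sobolev bookkeeping you describe).

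There is, however, a genuine slip in your treatment of the worst piece of $J_1^2$. When you integrate by parts in
\[
\int_{\mathbb{R}^3}\mathcal{C}\rho^{\mp}\,(u^{\pm}\cdot\nabla)\nabla^{2}n^{\pm}\cdot\nabla^{2}\theta\,\mathrm{d}x,
\]
the derivative that leaves $\nabla^3 n^\pm$ lands not only on the coefficient $\mathcal{C}\rho^\mp u^\pm$ (giving the commutator you list) but \emph{also} on $\nabla^2\theta$, producing an extra term
\[
-\int_{\mathbb{R}^3}\mathcal{C}\rho^{\mp}\,\nabla^{2}n^{\pm}\cdot(u^{\pm}\cdot\nabla)\nabla^{2}\theta\,\mathrm{d}x
\]
that involves $\nabla^3\theta$. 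This quantity is not dominated by the right-hand side of \eqref{3.8}, which stops at $\|\nabla\theta\|_{H^1}^2$; it is precisely the obstruction the paper mentions in the introduction when explaining why the $\theta$-estimate cannot be pushed to $H^3$. The paper avoids the issue by \emph{not} integrating by parts here at all: it bounds the piece directly via
\[
\Bigl|\int\mathcal{C}\rho^{\mp}u^{\pm}\cdot\nabla^3 n^{\pm}\cdot\nabla^2\theta\Bigr|
\lesssim \|u^{\pm}\|_{L^\infty}\|\nabla^3 n^{\pm}\|_{L^2}\|\nabla^2\theta\|_{L^2}
\lesssim \delta\,\|\nabla u^{\pm}\|_{H^1}\|\nabla\theta\|_{H^1},
\]
using $\|\nabla^3 n^{\pm}\|_{L^2}\le\delta$ from \eqref{3.1} and the embedding $\|u^{\pm}\|_{L^\infty}\lesssim\|\nabla u^{\pm}\|_{H^1}$. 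In other words, the obstacle you flagged disappears once you remember that the a~priori smallness already controls the full $H^3$ norm of $n^\pm$; your proposed integration by parts is unnecessary and in fact counterproductive.
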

\begin{proof}For $k=1,2$, multiplying $\nabla^k$\eqref{2.1}$_2$, $\nabla^k$\eqref{2.1}$_3$, $\nabla^k$\eqref{2.1}$_4$ by $\nabla^k\theta$, $\frac{\beta_1}{\beta_3}\nabla^ku^+$, $\frac{\beta_2}{\beta_4}\nabla^ku^-$, respectively, summing up and then integrating
the resultant equation over $\mathbb R^3$, we have
\begin{align}\begin{split}\label{3.9}
&\frac{1}{2}\frac{\rm{d}}{{\rm{d}}t}\left\{\left|\left|\nabla^k\theta\right|\right|^2_{L^2}+\frac{\beta_1}{\beta_3}\left|\left|\nabla^ku^+\right|\right|^2_{L^2}
+\frac{\beta_2}{\beta_4}\left|\left|\nabla^ku^-\right|\right|^2_{L^2}\right\}\\
&\quad+\frac{\beta_1}{\beta_3}\left(\nu_1^+\left|\left|\nabla^{k+1} u^+\right|\right|^2_{L^2}+\nu_2^+\left|\left|\nabla^k\text{\rm div}u^+\right|\right|^2_{L^2}\right)+\frac{\beta_2}{\beta_4}\left(\nu_1^-\left|\left|\nabla^{k+1} u^-\right|\right|^2_{L^2}+\nu_2^-\left|\left|\nabla^k\text{\rm div}u^-\right|\right|^2_{L^2}\right)\\
&=\displaystyle\int_{\mathbb{R}^3}\left(
\nabla^k{{F}}_{1}\cdot\nabla^k\theta+\frac{\beta_1}{\beta_3}\nabla^k{F}_{2}\cdot
\nabla^k u^{+}+\frac{\beta_2}{\beta_4}
\nabla^k F_3\cdot\nabla^ku^{-}\right)\mathrm{d}x\\
&=:J_{1}^k+J_{2}^k+J_{3}^k.
\end{split}\end{align}
By using integration by parts, \eqref{3.1},  Lemmas
\ref{1interpolation}--\ref{es-product}, H\"{o}lder inequality,
Sobolev inequality and Young inequality, we have
\begin{align}\begin{split}\label{3.10}
|J_{1}^k|&\leq\displaystyle \int_{\mathbb{R}^3}
 \left(\left|\nabla^k\theta\right|\left|\nabla^k\left({g^+_1(n^+, \theta)\textrm{div}u^+}\right)\right|
+\displaystyle \left|\nabla^k\theta\right|\left|\nabla^k\left({g^-_1(n^+, \theta)\textrm{div}u^-}\right)\right|\right.\\
&\quad+\left.\left|\nabla^{k}\theta
\right|\left|\nabla^{k}\left({\mathcal{C}\rho^-u^+\cdot \nabla
n^+}\right)\right|
+\left|\nabla^{k}\theta\right|\left| \nabla^{k}\left({\mathcal{C}\rho^+u^-\cdot \nabla n^-}\right)\right|\right)\mathrm{d}x\\
&\lesssim \Big(\left|\left|(n^+,
\theta)\right|\right|_{L^\infty}\left|\left|\nabla(u^+,
u^-)\right|\right|_{H^k}+ \left|\left|\nabla(u^+,
u^-)\right|\right|_{L^\infty}\left|\left|(n^+,
\theta)\right|\right|_{H^k}
\\&\quad+\left|\left|\nabla(n^+, n^-)\right|\right|_{L^\infty}\left|\left|(u^+, u^-)\right|\right|_{L^\infty}
\left|\left|\nabla(\mathcal{C}\rho^+, \mathcal{C}\rho^-)\right|\right|_{H^{k-1}}\\
& \quad +\left|\left|(\mathcal{C}\rho^+, \mathcal{C}\rho^-)\right|\right|_{L^\infty}\big(\left|\left|(u^+, u^-)\right|\right|_{L^\infty}\left|\left|\nabla (n^+,  n^-)\right|\right|_{H^k}\\
&\quad+\left|\left|\nabla (u^+,  u^-)\right|\right|_{H^1}\left|\left|\nabla (n^+,  n^-)\right|\right|_{L^\infty}\big)\Big)\left|\left|\nabla^k\theta\right|\right|_{L^2}\\
&\lesssim \delta\left(\left|\left|\nabla\theta\right|\right|_{H^1}^2+\left|\left|\nabla(u^+,u^-)\right|\right|_{H^2}^2\right).
\end{split}\end{align}

For the terms $J_2^k$, by employing integration by parts, Lemmas
\ref{1interpolation}--\ref{es-product}, H\"{o}lder inequality,
Sobolev inequality and Young inequality, we obtain
\begin{align}\begin{split}\label{3.11}
|J_{2}^k|&= \displaystyle -\frac{\beta_1}{\beta_3}\int_{\mathbb{R}^3}\nabla^{k-1}{F}_{2}\cdot\nabla^{k+1} u^{+}\mathrm{d}x\\
&\lesssim \left|\left|u^+\right|\right|_{L^\infty}\left|\left|\nabla u^+\right|\right|_{L^2}\left|\left|\nabla^2 u^+\right|\right|_{L^2}+\left|\left|\theta\right|\right|_{L^\infty}\left|\left|\nabla \theta\right|\right|_{L^2}\left|\left|\nabla^2 u^+\right|\right|_{L^2}\\
&\quad+ \left|\left|\theta\right|\right|_{L^\infty}\left|\left|\Delta u^+\right|\right|_{L^2}\left|\left|\nabla^2 u^+\right|\right|_{L^2}+\left|\left|\nabla u^+\right|\right|_{L^2}\left|\left|\nabla(n^+, \theta)\right|\right|_{L^\infty}\left|\left|\nabla^2 u^+\right|\right|_{L^2}\\
&\quad+ \left|\left|\nabla u^+\right|\right|_{L^\infty}\left|\left|\nabla u^+\right|\right|_{L^2}\left|\left|\nabla^3 u^+\right|\right|_{L^2}+\left|\left|u^+ \right|\right|_{L^\infty}\left|\left|\nabla^2 u^+\right|\right|_{L^2}\left|\left|\nabla^3 u^+\right|\right|_{L^2}\\
&\quad+ \left|\left|\nabla \theta\right|\right|_{L^\infty}\left|\left|\nabla \theta\right|\right|_{L^2}\left|\left|\nabla^3 u^+\right|\right|_{L^2}+\left|\left|\theta \right|\right|_{L^\infty}\left|\left|\nabla^2 \theta\right|\right|_{L^2}\left|\left|\nabla^3 u^+\right|\right|_{L^2}\\
&\quad+ \left|\left| \theta\right|\right|_{L^\infty}\left|\left|\nabla^3 u^+\right|\right|_{L^2}\left|\left|\nabla^3 u^+\right|\right|_{L^2}+\left|\left|\nabla(n^+,\theta) \right|\right|_{L^\infty}\left|\left|\nabla^2 u^+\right|\right|_{L^2}\left|\left|\nabla^3 u^+\right|\right|_{L^2}\\
&\quad+ \left|\left| \nabla u^+\right|\right|_{L^6}\left|\left|\nabla^2(n^+,\theta) \right|\right|_{L^3}\left|\left|\nabla^3 u^+\right|\right|_{L^2}+\left|\left|\nabla u^+ \right|\right|_{L^2}\left|\left|\nabla(n^+,\theta)\right|\right|_{L^\infty}^2\left|\left|\nabla^3 u^+\right|\right|_{L^2}\\
&\lesssim \delta\left(\left|\left|\nabla\theta\right|\right|_{H^1}^2+\left|\left|\nabla u^+\right|\right|_{H^2}^2\right).
\end{split}\end{align}
Similarly,    we have
\begin{equation}\label{3.12}|J_3^k|\lesssim \delta\left(\left|\left|\nabla\theta\right|\right|_{H^1}^2+\left|\left|\nabla u^-\right|\right|_{H^2}^2\right).\end{equation}
Substituting \eqref{3.10}--\eqref{3.12} into \eqref{3.9}, then
summing $k$ from $1$ to $2$, we obtain \eqref{3.8} if $\delta$ is
small enough. The proof of Lemma \ref{Lemma3.2} is completed.
\end{proof}

In the following lemma, we give the energy estimate of the time
derivative for $(\theta,u^+,u^-)$.

\begin{Lemma}\label{Lemma3.3}Assume that the notations and hypotheses  of Theorem \ref{1mainth} and \eqref{3.1}  are in force, then
\begin{equation}\begin{split}&\frac{1}{2}\frac{d}{dt}
\left\{\frac{\beta_3}{\beta_1}\left|\left|\theta_t\right|\right|_{L^2}^2+
\left|\left|u^\pm_t\right|\right|_{L^2}^2\right\}+\left(\nu_1^\pm\left|\left|\nabla
u^\pm_t\right|\right|_{L^2}^2+\nu_2^\pm\left|\left|\text{\rm div}u^\pm_t\right|\right|_{L^2}^2\right)\\
&\quad~\leq C\delta(\|\nabla u^\pm(t)\|_{H^1}^2+\|\nabla
u^\pm_t(t)\|^2_{L^2}),\end{split}\label{3.13}
\end{equation}
and
\begin{equation}\label{3.14}\begin{split}&\frac{d}{dt}
\left\{\mu^\pm\left|\left|\nabla
u_{t}^\pm\right|\right|_{L^2}^2+(\mu^\pm+\lambda^\pm)
\left|\left|\text{\rm div} u_{t}^\pm\right|\right|_{L^2}^2\right\}+\left|\left|\sqrt{\rho^\pm} u_{tt}^\pm\right|\right|_{L^2}^2\\
&~\quad\le C\Big(\|\nabla u^\pm\|_{H^1}^2+\delta\|\nabla
u^\pm_{t}\|_{L^2}^2\Big).\end{split}
\end{equation}
\end{Lemma}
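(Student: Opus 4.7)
The plan is to derive both estimates by differentiating the system \eqref{2.1} in time and testing against suitable combinations of the time derivatives $(\theta_t, u^\pm_t, u^\pm_{tt})$. The strategy mirrors Lemmas \ref{Lemma3.1}--\ref{Lemma3.2}: choose multipliers so that the skew-symmetric coupling terms $\beta_i\nabla\theta_t\cdot u^\pm_t$ integrate by parts and cancel, extract the dissipative contributions from the viscous operators, and control the nonlinear source terms using the smallness hypothesis \eqref{3.1} together with Sobolev embeddings and H\"older's inequality.

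For estimate \eqref{3.13}, I would first apply $\partial_t$ to $\eqref{2.1}_2$--$\eqref{2.1}_4$, producing linear equations for $(\theta_t, u^+_t, u^-_t)$ with source terms $\partial_t F_1, \partial_t F_2, \partial_t F_3$. Multiplying respectively by $\frac{\beta_3}{\beta_1}\theta_t$, $u^+_t$ and $u^-_t$, summing and integrating by parts produces the time derivative of the natural energy on the left together with the viscous dissipation. The $\beta_3/\beta_1$ weighting is chosen precisely to cancel the mixed term $\beta_3\int\nabla\theta_t\cdot u^+_t\,dx$ against its transpose from the $\theta_t$-equation (with an analogous cancellation for the $-$ component). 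The nonlinear contributions are at least cubic in $(n^+,\theta,u^\pm)$; whenever $\partial_t(n^+,\theta,u^\pm)$ appears inside $\partial_tF_i$, I would substitute from the equations themselves to trade time derivatives for spatial ones (modulo higher-order nonlinearities). Bounding with \eqref{3.1} then yields only contributions of size $\delta(\|\nabla u^\pm\|_{H^1}^2 + \|\nabla u^\pm_t\|_{L^2}^2)$.

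For estimate \eqref{3.14}, I would rewrite $\eqref{2.1}_3$ and its $-$ analogue back in the quasilinear form $\rho^\pm u^\pm_t + \rho^\pm u^\pm\cdot\nabla u^\pm + \nabla P = \mu^\pm\Delta u^\pm + (\mu^\pm+\lambda^\pm)\nabla\operatorname{div} u^\pm + H^\pm$, with $H^\pm$ collecting the lower-order pieces coming from the variation of $\alpha^\pm$. After differentiating in $t$ and testing against $u^\pm_{tt}$, the principal term $\int\rho^\pm u^\pm_{tt}\cdot u^\pm_{tt}\,dx$ yields $\|\sqrt{\rho^\pm}u^\pm_{tt}\|_{L^2}^2$, while the viscous terms integrate by parts to give $\frac{1}{2}\frac{d}{dt}\bigl(\mu^\pm\|\nabla u^\pm_t\|_{L^2}^2 + (\mu^\pm+\lambda^\pm)\|\operatorname{div} u^\pm_t\|_{L^2}^2\bigr)$. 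The convective and $H^\pm$-type error terms are standard and absorbable under \eqref{3.1}, contributing to $\|\nabla u^\pm\|_{H^1}^2$ and $\delta\|\nabla u^\pm_t\|_{L^2}^2$ pieces on the right.

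The main obstacle I anticipate is the pressure contribution $\int\nabla P_t\cdot u^\pm_{tt}\,dx$ in the proof of \eqref{3.14}, since $u^\pm_{tt}$ carries no smallness factor and is only controlled through the $\|\sqrt{\rho^\pm}u^\pm_{tt}\|_{L^2}^2$ term on the left. A naive Cauchy--Schwarz bound via $\|\nabla\theta_t\|_{L^2}$ is not directly available; instead I would use $\eqref{2.1}_2$ to substitute $\theta_t = -\beta_1\operatorname{div} u^+ - \beta_2\operatorname{div} u^- + F_1$, so that $\nabla\theta_t$ becomes a linear combination of $\nabla\operatorname{div} u^\pm$ plus controlled nonlinear remainders. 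Young's inequality with a small parameter $\varepsilon$ then produces $\varepsilon\|u^\pm_{tt}\|_{L^2}^2$ (absorbable into the left) and a $C\|\nabla^2 u^\pm\|_{L^2}^2 \leq C\|\nabla u^\pm\|_{H^1}^2$ contribution on the right. This accounts precisely for the asymmetry in \eqref{3.14}: the dominant $\|\nabla u^\pm\|_{H^1}^2$ term appears without any factor of $\delta$, because it originates from an unavoidable linear trade of $\nabla\theta_t$ for spatial derivatives of $u^\pm$ through the pressure equation, rather than from a genuinely nonlinear error.
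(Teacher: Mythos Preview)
Your proposal is correct and follows essentially the same route as the paper. For \eqref{3.13} the paper also differentiates $\eqref{2.1}_{2,3,4}$ in $t$, tests with $\bigl(\tfrac{\beta_3}{\beta_1}\theta_t,u^+_t,u^-_t\bigr)$ (the relation $\beta_2\beta_3=\beta_1\beta_4$ makes both cross terms cancel simultaneously), and substitutes $n^\pm_t,\theta_t$ from the equations when bounding $(F_i)_t$; for \eqref{3.14} it likewise passes to the quasilinear form with $\rho^\pm u^\pm_{tt}$, tests with $u^\pm_{tt}$, and controls the term $\int\nabla\theta_t\cdot u^\pm_{tt}$ by Young's inequality together with $\|\nabla\theta_t\|_{L^2}\lesssim\|\nabla^2 u^\pm\|_{L^2}+\delta(\cdots)$ obtained from $\eqref{2.1}_2$, which is exactly the mechanism you describe and the source of the $\|\nabla u^\pm\|_{H^1}^2$ term without a $\delta$.
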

\begin{proof}Differentiating \eqref{2.1}$_{2,3,4}$ with respect to $t$, one has
\begin{equation}\label{3.15}
\left\{\begin{array}{l}
\partial_{tt}\theta+\beta_1\text{\rm div}u^+_t+\beta_2\text{\rm div}u^-_t=(F_1)_t,\\
\partial_{tt}u^{+}+\beta_3\nabla
\theta_t-\nu^+_1\Delta u^+_t-\nu^+_2\nabla\text{\rm div} u^+_t=(F_2)_t, \\
\partial_{tt}u^{-}+\beta_4\nabla \theta_t-\nu^-_1\Delta u^-_t-\nu^-_2\nabla\text{\rm div} u^-_t=(F_3)_t. \\
\end{array}\right.
\end{equation}
Multiplying \eqref{3.15}$_1$, \eqref{3.15}$_2$, \eqref{3.15}$_3$, by
$\frac{\beta_3}{\beta_1}\theta_t,u^+_t,u^-_t$ respectively, summing
up and then integrating the resulting equality over $\mathbb{R}^3$,
one has
\begin{equation}\begin{split}&
\frac{1}{2}\frac{d}{dt}\int_{\mathbb{R}^3} \frac{\beta_3}{\beta_1}
|\theta_t|^2+
|u^\pm_t|^2\mathrm{d}x+\int_{\mathbb{R}^3}\nu_1^{\pm}|\nabla
u_t^\pm(t)|^2\nu_2^\pm+\int_{\mathbb{R}^3}|\hbox{div} u^\pm_t(t)|^2\mathrm{d}x\\
=&~\int_{\mathbb{R}^3}\frac{\beta_3}{\beta_1}(F_1)_t\theta_t+(
F_2)_tu^+_t+( F_3)_tu^-_tdx.\end{split}\label{3.16}
\end{equation}
Using \eqref{1.15}, \eqref{2.1}$_{1,2}$ and the similar argument in
\eqref{3.10}, we obtain
\begin{equation}\label{3.17}\begin{split}&\left|\int_{\mathbb{R}^3} (F_1)_t\theta_t \mathrm{d}x\right|\\ \lesssim &~\|(F_1)_t\|_{L^2}\|\theta_t\|_{L^2}
\\ \lesssim&~\Big(\|n^\pm_t\|_{L^3}\|\nabla u^\pm\|_{L^6}+\|n^\pm\|_{L^\infty}\|\nabla u^\pm_t\|_{L^2}+\|\nabla n^\pm\|_{L^3}\| u^\pm_t\|_{L^6}+\|\nabla n^\pm_t\|_{L^2}\| u^\pm\|_{L^\infty}\\
&\quad +\|\theta_t\|_{L^2}\|\nabla u^\pm\|_{L^\infty}+\|\theta\|_{L^\infty}\|\nabla u^\pm_t\|_{L^2}\Big)\|\theta_t\|_{L^2}\\ \lesssim&~\Big(\|\nabla u^\pm\|_{L^3}\|\nabla u^\pm\|_{L^6}+\| u^\pm\|_{L^\infty}\|\nabla n^\pm\|_{L^3}\|\nabla u^\pm\|_{L^6}+\|n^\pm\|_{L^\infty}\|\nabla u^\pm_t\|_{L^2}+\|\nabla n^\pm\|_{L^3}\| u^\pm_t\|_{L^6}\\&~+\|\nabla^2 u^\pm\|_{L^2}\| u^\pm\|_{L^\infty}+\|(n^\pm,u^\pm)\|_{L^\infty}\|\nabla^2 (n^\pm,u^\pm)\|_{L^2}\| u^\pm\|_{L^\infty}\Big)\Big(\|\nabla u^\pm\|_{L^2}+\|F_1\|_{L^2}\Big)
\\ \lesssim&~\delta\left(\|\nabla u^\pm\|_{H^1}^2+\|\nabla u^\pm_t\|_{L^2}^2\right).
\end{split}\end{equation}
From integration by parts, we have
\begin{equation}\label{3.18}\begin{split}
\int_{\mathbb{R}^3}\left(\frac{\mu^{\pm}}{\rho^{\pm}}-\nu_1^\pm\right)\Delta
u^\pm_t
u^\pm_t\mathrm{d}x=-\int_{\mathbb{R}^3}\left(\frac{\mu^{\pm}}{\rho^{\pm}}-\nu_1^\pm\right)\nabla
u^\pm_t \nabla
u^\pm_t\mathrm{d}x-\int_{\mathbb{R}^3}\nabla\left(\frac{\mu^{\pm}}{\rho^{\pm}}-\nu_1^\pm\right)\nabla
u^\pm_t u^\pm_t\mathrm{d}x.\end{split}\end{equation} Performing the
similar procedure as \eqref{3.17}, we finally have
\begin{equation}\label{3.19}\left|\int_{\mathbb{R}^3} (F_2)_tu^+_t\mathrm{d}x\right|+\left|\int_{\mathbb{R}^3}
(F_3)_tu^-_t\mathrm{d}x\right|\lesssim \delta\left(\|\nabla
u^\pm\|_{H^1}+\|\nabla u^\pm_t\|_{L^2}\right).\end{equation}
Substituting \eqref{3.17} and \eqref{3.19} into \eqref{3.16}, we
obtain \eqref{3.13}.

\indent Next, we turn to the proof of \eqref{3.14}. To do this, we
first rewrite \eqref{3.15}$_2$ and \eqref{3.15}$_3$ in the following
form:
\begin{equation}\label{3.20}
\left\{\begin{array}{l}\rho^+u^{+}_{tt}-\mu^+\Delta u^+_t-(\mu^++\lambda^+)\nabla\operatorname{div} u^+_t\\
\quad=\nabla\theta_t-\rho^+_tu^{+}_{t}-\Big[\rho^+u^+\cdot\nabla
u^++\frac{\mu^+\left(\nabla u^{+}+\nabla^{t} u^{+}\right)
\nabla\alpha^+}{\alpha^+}+\frac{\lambda^+\operatorname{div} u^+\nabla\alpha^+}{\alpha^+}\Big]_t, \\
\rho^-u^{-}_{tt}-\mu^-\Delta u^-_t-(\mu^-+\lambda^-)\nabla\operatorname{div} u^-_t\\
\quad=\nabla\theta_t-\rho^-_tu^{-}_{t}-\Big[\rho^-u^-\cdot\nabla u^-+\frac{\mu^+\left(\nabla u^{-}+\nabla^{t} u^{-}\right)\nabla\alpha^-}{\alpha^-}+\frac{\lambda^-\operatorname{div} u^-\nabla\alpha^-}{\alpha^-}\Big]_t. \\
\end{array}\right.
\end{equation}
Multiplying \eqref{3.20}$_1$  and \eqref{3.20}$_2$ by $u^+_{tt}$ and $u^-_{tt}$ respectively, summing up and then integrating
the resulting equality over $\mathbb R^3$, one has
\begin{equation}\begin{split}&\frac{1}{2}\frac{d}{dt}\int_{\mathbb{R}^3}
 \mu^\pm| \nabla u_{t}^\pm|^2+(\mu^\pm+\lambda^\pm)| \text{div} u_{t}^\pm|^2\mathrm{d}x+\int_{\mathbb{R}^3}\rho^\pm |u_{tt}^\pm(t)|^2\mathrm{d}x\\
 &=\int_{\mathbb{R}^3} \nabla\theta_tu^\pm_{tt}\mathrm{d}x-\int_{\mathbb{R}^3}\rho^\pm_tu^{\pm}_{t}u^\pm_{tt}\mathrm{d}x\\
 &\quad-\int_{\mathbb{R}^3} \Big[\rho^\pm u^\pm\cdot\nabla u^\pm+\frac{\mu^+\left(\nabla u^{\pm}+\nabla^{t}
 u^{\pm}\right)\nabla\alpha^\pm}{\alpha^\pm}+\frac{\lambda^-\operatorname{div} u^\pm\nabla\alpha^\pm}{\alpha^\pm}\Big]_tu^\pm_{tt} \mathrm{d}x\\
 &\le C\left(\|\nabla\theta_t\|_{L^2}\|u^\pm_{tt}\|_{L^2}+\|\rho^\pm_t\|_{L^3}\|u^\pm_{t}\|_{L^6}\|u^\pm_{tt}\|_{L^2}
+\|u^\pm\|_{L^\infty}\|\nabla u^\pm_{t}\|_{L^2}\|u^\pm_{tt}\|_{L^2}\right.\\
&\quad+\|\nabla u^\pm\|_{L^3}\|
u^\pm_{t}\|_{L^6}\|u^\pm_{tt}\|_{L^2}+\|\nabla
\alpha^\pm\|_{L^\infty}\|\nabla u^\pm_{t}\|_{L^2}
\|u^\pm_{tt}\|_{L^2}+\|\nabla u^\pm\|_{L^\infty}\|\nabla
\alpha^\pm_{t}\|_{L^2}\|u^\pm_{tt}\|_{L^2}\\
&\quad+\left.\|\nabla u^\pm\|_{L^6}\| \|\nabla \alpha^\pm\|_{L^6}\| \alpha^\pm_{t}\|_{L^6}\|u^\pm_{tt}\|_{L^2}
+\|\rho^\pm_t\|_{L^2}\| u^\pm\|_{L^\infty}\| \nabla u^\pm\|_{L^\infty}\|u^\pm_{tt}\|_{L^2}\right)\\
&\le C\Big(\|\nabla u^\pm\|_{H^1}^2+\delta\|\nabla
u^\pm_{t}\|_{L^2}^2\Big)+\frac{1}{2}\int_{\mathbb{R}^3}\rho^\pm
|u_{tt}^\pm(t)|^2\mathrm{d}x,\end{split}\nonumber
\end{equation}
which shows \eqref{3.14}.
The proof of Lemma \ref{Lemma3.3} is completed.\end{proof}

Now, we are in a position to derive the time decay rates for $(\theta,u^+,u^-)$.

\begin{Theorem}\label{2mainth} Under the conditions of Theorem \ref{1mainth} and \eqref{3.1}, there exists a  positive constant $C$  such that
\begin{equation}\label{3.21}\begin{split}
\|(\theta,u^\pm)(t)\|_{L^2}\le CK_0(1+t)^{-\frac{3}{4}},
\end{split}\end{equation}
\begin{equation}\label{3.22}\|\nabla \theta(t)\|_{H^1}+\|\nabla u^\pm(t)\|_{H^2}+\|(\theta_t,u^\pm_t,\nabla u^\pm_t)\|^2_{L^2}\le CK_0(1+t)^{-\frac{5}{4}}
\end{equation}
and
\begin{equation}\label{3.23}\int_0^t(1+\tau)^{\frac{9}{8}}\| u_{tt}^\pm(\tau)\|_{L^2}^2\mathrm{d}\tau\le CK_0.
\end{equation}
\end{Theorem}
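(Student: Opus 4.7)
The plan is to combine the energy identities in Lemmas \ref{Lemma3.1}--\ref{Lemma3.3} with the spectral decay from Proposition \ref{Prop2.6} via a Duhamel--Gronwall bootstrap, whose crucial input is a divergence-form rewriting of the non-dissipative coupling inside $F_1$.

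First I would construct an equivalent energy functional. Summing \eqref{3.8} over $k=1,2$ produces an $\mathcal{E}_1(t)$ equivalent to $\|\nabla(\theta,u^+,u^-)\|_{H^1}^2$ with dissipation $\|\nabla^2 u^\pm\|_{H^1}^2+\|\nabla\operatorname{div}u^\pm\|_{H^1}^2$. Adding \eqref{3.13} and \eqref{3.14} I then set
$$\mathcal{E}_2(t):=\mathcal{E}_1(t)+\tfrac{\beta_3}{\beta_1}\|\theta_t\|_{L^2}^2+\sum_\pm\bigl(\|u^\pm_t\|_{L^2}^2+\mu^\pm\|\nabla u^\pm_t\|_{L^2}^2+(\mu^\pm+\lambda^\pm)\|\operatorname{div} u^\pm_t\|_{L^2}^2\bigr).$$
Reading \eqref{2.1}$_{3,4}$ as elliptic Lam\'e systems with source $u^\pm_t+\beta_{3,4}\nabla\theta+F_{2,3}$, classical elliptic bounds give $\|\nabla^2 u^\pm\|_{L^2}\lesssim\|u^\pm_t\|_{L^2}+\|\nabla\theta\|_{L^2}+\delta(\cdots)$, so $\mathcal{E}_2(t)$ is equivalent to $\|\nabla\theta\|_{H^1}^2+\|\nabla u^\pm\|_{H^2}^2+\|(\theta_t,u^\pm_t,\nabla u^\pm_t)\|_{L^2}^2$. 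Splitting every non-dissipated component via $\|f\|_{L^2}\le\|f^l\|_{L^2}+\|f^h\|_{L^2}$ with $\|f^h\|_{L^2}\lesssim\|\nabla f^h\|_{L^2}$ and absorbing all $\delta$-small nonlinear remainders into the dissipation yields the Lyapunov inequality
$$\frac{d}{dt}\mathcal{E}_2(t)+C_1\mathcal{E}_2(t)+C_2\|u^\pm_{tt}\|_{L^2}^2\lesssim\|\nabla(\theta^l,u^{\pm,l})(t)\|_{L^2}^2.$$

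Second I would control the low-frequency forcing through Duhamel on the linearization \eqref{2.12}. Writing $U=(\theta,u^+,u^-)^t$ and $\mathcal{F}=(F_1,F_2,F_3)^t$, Proposition \ref{Prop2.6} with $p=1$ gives
$$\|\nabla U^l(t)\|_{L^2}\lesssim K_0(1+t)^{-5/4}+\int_0^t\|\nabla e^{(t-\tau)\mathcal{B}}\mathcal{F}^l(\tau)\|_{L^2}\,d\tau.$$
Naive bookkeeping on this convolution fails: as explained in the introduction, the coupling $\mathcal{C}\rho^-u^+\cdot\nabla n^++\mathcal{C}\rho^+u^-\cdot\nabla n^-$ inside $F_1$ yields only the insufficient rate $(1+t)^{-1}$, because $n^\pm$ carry no temporal decay. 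The key observation is the algebraic identity \eqref{1.24}, which rewrites this sum as $\operatorname{div}\mathcal{R}_1+(\text{good terms})$ thanks to the non-degeneracy $\rho^-s_-^2-\rho^+s_+^2\neq0$ near the equilibrium. Commuting the divergence into the semigroup and invoking Proposition \ref{Prop2.6} with one extra derivative gains an additional $(1+t-\tau)^{-1/2}$ factor. Splitting $\int_0^{t/2}+\int_{t/2}^t$, bootstrapping a tentative rate $\|U(t)\|_{L^2}\lesssim K_0(1+t)^{-3/4}$, and using the $H^3$ a priori assumption \eqref{3.1} to bound $\mathcal{R}_1$ in $L^1\cap L^2$ produces $\|\nabla(\theta^l,u^{\pm,l})(t)\|_{L^2}\lesssim K_0(1+t)^{-5/4}$.

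Third, substituting this back into the Lyapunov inequality and multiplying by the integrating factor $(1+t)^{5/2}$ delivers $\mathcal{E}_2(t)\lesssim K_0^2(1+t)^{-5/2}$, which is exactly \eqref{3.22}. A parallel zero-order analysis, combining \eqref{3.3} with the gradient decay just established and the Duhamel bound $\|U^l(t)\|_{L^2}\lesssim K_0(1+t)^{-3/4}$ from Proposition \ref{Prop2.6}, closes the bootstrap and gives \eqref{3.21}. For \eqref{3.23}, multiplying \eqref{3.14} by $(1+\tau)^{9/8}$ and integrating in time, the right-hand side decays at least like $(1+\tau)^{9/8-5/2}=(1+\tau)^{-11/8}$, which is integrable and produces the claimed weighted bound. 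The principal obstacle is justifying \eqref{1.24}: without this divergence rewriting the bootstrap fails at the rate $(1+t)^{-1}$, and the identity itself relies essentially on the common-pressure assumption $P^+=P^-$ and the non-vanishing of $\rho^-s_-^2-\rho^+s_+^2$ near equilibrium so that the denominator in $\mathcal{R}_1$ stays bounded.
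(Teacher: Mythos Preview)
Your overall architecture matches the paper's: build an energy $H(t)\sim\|\nabla\theta\|_{H^1}^2+\|\nabla u^\pm\|_{H^2}^2+\|(\theta_t,u^\pm_t,\nabla u^\pm_t)\|_{L^2}^2$ from Lemmas \ref{Lemma3.2}--\ref{Lemma3.3} together with the elliptic relations extracted from \eqref{2.1}, derive the Lyapunov inequality with right-hand side $\|\nabla(\theta^l,u^{\pm,l})\|_{L^2}^2$, and close by Duhamel using the divergence rewriting \eqref{1.24}. You have correctly identified that this rewriting is the decisive step.

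There is, however, a genuine bookkeeping gap in your bootstrap. You propose to bootstrap on the tentative rate $\|U(t)\|_{L^2}\lesssim K_0(1+t)^{-3/4}$, but this is too weak to force $\|\nabla U^l(t)\|_{L^2}\lesssim K_0(1+t)^{-5/4}$. Every nonlinearity in $\mathcal{F}_2=\mathcal{F}-(\operatorname{div}F_{11},0,0)^t$ carries at least one factor of $(\nabla\theta,\nabla u^\pm,\nabla^2 u^\pm)$; pairing it in $L^1$ with an $L^2$-bounded factor from \eqref{3.1} and your bootstrap gives only $\|\mathcal{F}_2\|_{L^1}\lesssim\delta(1+\tau)^{-3/4}$, and then $\int_0^t(1+t-\tau)^{-5/4}(1+\tau)^{-3/4}\,d\tau\lesssim(1+t)^{-3/4}$, not $(1+t)^{-5/4}$. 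The same shortfall occurs for $F_{11}$ in $L^{3/2}$ (or $L^1$). The paper avoids this by bootstrapping directly on the gradient energy: it sets $\mathcal{E}(t)=\sup_{0\le\tau\le t}(1+\tau)^{5/2}H(\tau)$, so that $\|(\nabla\theta,\nabla u^\pm,\nabla^2 u^\pm)(\tau)\|_{L^2}\lesssim(1+\tau)^{-5/4}\sqrt{\mathcal{E}(t)}$ enters the Duhamel integral, producing $\|\nabla U^l(t)\|_{L^2}\lesssim K_0(1+t)^{-5/4}+\delta(1+t)^{-5/4}\sqrt{\mathcal{E}(t)}$; substituting back and applying Gronwall yields $\mathcal{E}(t)\le CK_0^2+C\delta^2\mathcal{E}(t)$, which closes for $\delta$ small. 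Only \emph{after} \eqref{3.22} is secured does the paper run the zero-order argument for \eqref{3.21}. Your scheme should be reordered accordingly: bootstrap on $\mathcal{E}_2(t)$ (rate $(1+t)^{-5/2}$), prove \eqref{3.22} first, and then derive \eqref{3.21} as a consequence.

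A minor point on \eqref{3.23}: the paper multiplies the full Lyapunov inequality \eqref{3.32} by $(1+t)^{9/8}$, not just \eqref{3.14}; but once \eqref{3.22} is in hand, multiplying \eqref{3.14} alone works as well, since both the commutator term $(1+\tau)^{1/8}\|\nabla u^\pm_t\|_{L^2}^2$ and the right-hand side $(1+\tau)^{9/8}\|\nabla u^\pm\|_{H^1}^2$ are integrable.
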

\begin{proof}From \eqref{2.1}, we see that

\begin{equation}\label{3.24-u}\|u^\pm_t\|_{L^2}\le C\left(\|(\nabla^2 u^\pm,\nabla \theta)\|_{L^2}+\delta\|\nabla u^\pm\|_{H^1}+\delta\|\nabla^2\theta\|_{L^2}\right),\end{equation}

\begin{equation}\label{3.24}\|\theta_t\|_{L^2}\le C\|\nabla u^\pm\|_{L^2},\end{equation}
and
\begin{equation}\nonumber\begin{split} \|\nabla^2\theta\|_{L^2}\le &~C\|(\nabla u^+_t,\nabla^3 u^+,\nabla F_2)\|_{L^2}\\
\le &~C\left(\|(\nabla u^+_t,\nabla^3 u^+)\|_{L^2}+\delta\|\nabla^2u^+\|_{H^1}+\delta\|\nabla^2\theta\|_{L^2}\right).
\end{split}\end{equation}
Therefore, we have
\begin{equation}\label{3.25}\begin{split} \|\nabla^2\theta\|_{L^2}
\le C\left(\|(\nabla u^+_t,\nabla^3 u^+)\|_{L^2}+\delta\|\nabla^2u^+\|_{L^2}\right).
\end{split}\end{equation}
Similarly, we also have
\begin{equation}\label{3.26}\begin{split} \|\nabla^3 u^\pm\|_{L^2}
\le C\left(\|(\nabla u^\pm_t,\nabla^2
\theta)\|_{L^2}+\delta\|\nabla^2u^\pm\|_{L^2}\right).
\end{split}\end{equation}
Let $D>0$ be a large but fixed constant. Using \eqref{3.24-u}--\eqref{3.26} and summing up
$D\times\left(\eqref{3.8}+\eqref{3.13}\right)+\eqref{3.14}$,
then there exists an energy functional $H(\theta,u^\pm)$ which is
equivalent to $\|\nabla\theta\|^2_{H^1}+\|\nabla
u^\pm\|^2_{H^2}+\|(\theta_t,u^\pm_t,\nabla u^\pm_t)\|^2_{L^2}$ such
that
\begin{equation}\label{3.27}\frac{d}{dt}H(t)+C_1\left(H(t)+\|u^\pm_{tt}\|_{L^2}^2\right)\le C\|\nabla(\theta^l,u^{\pm,l})\|_{L^2}^2.
\end{equation}
for some positive constant $C_1$.
We also define the time--weighted energy functional
\begin{equation}\label{3.28} \mathcal E(t)=\sup\limits_{0\leq\tau\leq t}\big
\{(1+\tau)^{\frac{5}{2}}H(\tau)\big \}.
\end{equation}
It is clear that $\mathcal E(t)$  is non--decreasing. Denoting
$$\mathcal F=(F^1,F^2,F^3)^t,$$ we have from Duhamel principle
that
\begin{equation}\nonumber U^l=\text{e}^{t\mathcal{B}}U^l(0)+\int_0^t\text{e}^{(t-\tau)\mathcal{B}}\mathcal
F^l(\tau)\mathrm{d}\tau,
\end{equation}
which together with Proposition \ref{Prop2.6},  Plancherel theorem,
H\"older inequality and Hausdorff--Young
 inequality leads to
\begin{equation}\label{3.29}\begin{split}&\|\nabla(\theta^l,u^{\pm,l})(t)\|_{L^{2}}\\
 \lesssim &~(1+t)^{-\frac{5}{4}}\|  (\theta,u^{\pm})(0)\|_{L^1}+
 \int_0^t\left\|\nabla \text{e}^{(t-\tau)\mathcal{B}}\mathcal F^l(\tau)\right\|_{L^2}\mathrm{d}\tau.
\end{split}\end{equation}
Due to non--dissipation property of the variable $n^\pm$, it
requires us to develop new thoughts to deal with the last two terms
of $F_1$ on the right-hand side of $\eqref{2.1}_2$. The main idea
here is that we consider the two trouble terms as one: $-\mathcal
C\rho^-u^+\cdot\nabla n^+-\mathcal C\rho^+u^-\cdot\nabla n^-$, and
then rewrite it in a clever way. To see this,  by noticing that
$\rho^-s_-^2-\rho^+s_+^2\neq 0$, and fully using the subtle relation
between the variables, we surprisingly find that the trouble term
$-\mathcal C\rho^-u^+\cdot\nabla n^+-\mathcal C\rho^+u^-\cdot\nabla
n^-$ can be rewritten as
\begin{equation}\begin{split}\label{3.6-1}&-\mathcal C\rho^-u^+\cdot\nabla n^+-\mathcal C\rho^+u^-\cdot\nabla n^-
\\=&-\mathcal C\rho^-u^+\cdot\nabla n^+-\mathcal C\rho^+u^-\cdot\left(\frac{\nabla\theta}{\mathcal C\rho^+}-\frac{\rho^-\nabla n^+}{\rho^+}\right)
\\=&-u^-\cdot\nabla\theta+\mathcal C\rho^-(u^--u^+)\cdot\nabla n^+
\\=&-u^-\cdot\nabla\theta+\frac{s_+^2s_-^2\rho^+\rho^-(u^--u^+)}{(\rho^+s_+)^2+(\rho^-s_-^2-\rho^+s_+^2)R^+}\cdot\nabla n^+
\\=&-u^-\cdot\nabla\theta+\frac{s_+^2s_-^2\rho^+\rho^-(u^--u^+)}{\rho^-s_-^2-\rho^+s_+^2}\cdot\nabla \ln \left((\rho^+s_+)^2+(\rho^-s_-^2-\rho^+s_+^2)R^+\right)
\\&+\frac{s_+^2s_-^2\rho^+\rho^-(u^+-u^-)}{\rho^-s_-^2-\rho^+s_+^2}\cdot\left(\nabla (\rho^+s_+)^2+R^+\nabla(\rho^-s_-^2-\rho^+s_+^2)\right)
\\=&-u^-\cdot\nabla\theta+\text{div}\left[\frac{s_+^2s_-^2\rho^+\rho^-(u^--u^+)}{\rho^-s_-^2-\rho^+s_+^2}
\ln\left(\frac{ (\rho^+s_+)^2+(\rho^-s_-^2-\rho^+s_+^2)R^+}{(\bar\rho^+\bar s_+)^2+(\bar\rho^-\bar s_-^2-\bar\rho^+\bar s_+^2)\bar R^+}\right)
\right]
\\&+\ln\left(\frac{ (\rho^+s_+)^2+(\rho^-s_-^2-\rho^+s_+^2)R^+}{(\bar\rho^+\bar s_+)^2+(\bar\rho^-\bar s_-^2-
\bar\rho^+\bar s_+^2)\bar R^+}\right)\text{div}\left[\frac{s_+^2s_-^2\rho^+\rho^-(u^+-u^-)}{\rho^-s_-^2-\rho^+s_+^2} \right]
\\&+\frac{s_+^2s_-^2\rho^+\rho^-(u^+-u^-)}{\rho^-s_-^2-\rho^+s_+^2}\cdot\left(\nabla (\rho^+s_+)^2+R^+\nabla(\rho^-s_-^2-\rho^+s_+^2)\right)
\\=&-u^-\cdot\nabla\theta+\ln\left(\frac{ (\rho^+s_+)^2+(\rho^-s_-^2-\rho^+s_+^2)R^+}{(\bar\rho^+\bar s_+)^2+(\bar\rho^-\bar s_-^2-\bar\rho^+\bar s_+^2)\bar R^+}\right)\text{div}\left[\frac{s_+^2s_-^2\rho^+\rho^-(u^+-u^-)}{\rho^-s_-^2-\rho^+s_+^2} \right]
\\&+\frac{s_+^2s_-^2\rho^+\rho^-(u^+-u^-)}{\rho^-s_-^2-\rho^+s_+^2}\cdot\left(\nabla (\rho^+s_+)^2+R^+\nabla(\rho^-s_-^2-\rho^+s_+^2)\right)
+\text{div}F_{11}.
\end{split}\end{equation}
 Setting $\mathcal F_1=(\text{div}F_{11},0,0)^t$ and $\mathcal F_2=\mathcal F-\mathcal F_1$, then from Proposition \ref{Prop2.6},
 \eqref{3.28} and \eqref{3.29}, we have
 \begin{equation}\label{3.31}\begin{split}&\|\nabla(\theta^l,u^{\pm,l})(t)\|_{L^{2}}\\
 \lesssim &~(1+t)^{-\frac{5}{4}}\|  (\theta,u^{\pm})(0)\|_{L^1}+\int_0^t(1+t-\tau)^{-\frac{5}{4}}\left\|\mathcal F_2(\tau)\right\|_{L^1}\mathrm{d}\tau\\&
 +\int_0^t(1+t-\tau)^{-\frac{5}{4}}\|F_{11}\|_{L^\frac{3}{2}}\mathrm{d}\tau\\
 \lesssim &~(1+t)^{-\frac{5}{4}}K_0+\int_0^t(1+t-\tau)^{-\frac{5}{4}}
 \|(n^\pm,\theta,u^\pm,\nabla n^\pm,\nabla \theta)
 (\tau)\|_{L^2}\|(\nabla\theta,\nabla u^\pm,\nabla^2u^\pm)(\tau)\|_{L^2}\mathrm{d}\tau
 \\&+\int_0^t(1+t-\tau)^{-\frac{5}{4}}\|(n^\pm,\theta)(\tau)\|_{L^2}\|u^\pm(\tau)\|_{L^6}\mathrm{d}\tau
 \\
 \lesssim &~(1+t)^{-\frac{5}{4}}K_0+\delta\int_0^t(1+t-\tau)^{-\frac{5}{4}}(1+\tau)^{-\frac{5}{4}}\sqrt{\mathcal E(t)}\mathrm{d}\tau
 \\
 \lesssim &~(1+t)^{-\frac{5}{4}}K_0+\delta(1+t)^{-\frac{5}{4}}\sqrt{\mathcal E(t)}.
\end{split}\end{equation}
Substituting \eqref{3.31} into \eqref{3.27}, we have
\begin{equation}\label{3.32}\frac{d}{dt}H(t)+C_1\left(H(t)+\|u^\pm_{tt}\|_{L^2}^2\right)\le C(1+t)^{-\frac{5}{2}}K^2_0+C\delta^2(1+t)^{-\frac{5}{2}}\mathcal E(t).
\end{equation}
Applying Gronwall inequality to \eqref{3.32}, we have
\begin{equation}\nonumber H(t)\le C(1+t)^{-\frac{5}{2}}K^2_0+C\delta^2(1+t)^{-\frac{5}{2}}\mathcal E(t),
\end{equation}
that is,
\begin{equation}\nonumber \mathcal E(t)\le CK^2_0+C\delta^2\mathcal E(t),
\end{equation}
which shows \eqref{3.22} if $\delta$ is small enough.

Employing the similar argument as in \eqref{3.31}, we have
\begin{equation}\label{3.33}\begin{split}&\|(\theta^l,u^{\pm,l})(t)\|_{L^{2}}\\
 \lesssim &~(1+t)^{-\frac{3}{4}}\|  (\theta,u^{\pm})(0)\|_{L^1}+\int_0^t(1+t-\tau)^{-\frac{3}{4}}\left\|\mathcal F_2(\tau)
 \right\|_{L^1}\mathrm{d}\tau\\&
 +\int_0^t(1+t-\tau)^{-\frac{3}{4}}\|F_{11}\|_{L^\frac{3}{2}}\mathrm{d}\tau\\
 \lesssim &~(1+t)^{-\frac{3}{4}}K_0+\int_0^t(1+t-\tau)^{-\frac{5}{4}}\|(n^\pm,u^\pm,\nabla n^\pm)(\tau)
 \|_{L^2}\|(\nabla\theta,\nabla u^\pm,\nabla^2u^\pm)(\tau)\|_{L^2}\mathrm{d}\tau\\
 &+\int_0^t(1+t-\tau)^{-\frac{3}{4}}\|n^\pm(\tau)\|_{L^2}\|u^\pm(\tau)\|_{L^6}\mathrm{d}\tau
 \\
 \lesssim &~(1+t)^{-\frac{3}{4}}K_0+\delta^2\int_0^t(1+t-\tau)^{-\frac{3}{4}}(1+\tau)^{-\frac{5}{4}}\mathrm{d}\tau
 \\
 \lesssim &~(1+t)^{-\frac{3}{4}}K_0+\delta^2(1+t)^{-\frac{3}{4}}.
\end{split}\end{equation}
Substituting \eqref{3.33} into \eqref{3.3}, there exists a positive constant $C_2$ such that
\begin{equation}\nonumber\begin{split}&\frac{\rm{d}}{{\rm{d}}t}\left\{\left|\left|\theta\right|\right|^2_{L^2}+\frac{\beta_1}{\beta_3}\left|\left|u^+\right|\right|^2_{L^2}
+\frac{\beta_2}{\beta_4}\left|\left|u^-\right|\right|^2_{L^2}\right\}\\
&\quad+C_2\left(\left|\left|\theta\right|\right|^2_{L^2}+\frac{\beta_1}{\beta_3}\left|\left|u^+\right|\right|^2_{L^2}
+\frac{\beta_2}{\beta_4}\left|\left|u^-\right|\right|^2_{L^2}\right)\\
&\lesssim\left|\left|(\theta^l,u^{\pm,l})\right|\right|^2_{L^2}+\left|\left|\nabla \theta\right|\right|^2_{L^2}\\
&\lesssim(1+t)^{-\frac{3}{2}}K_0^2.
\end{split}\end{equation}
Applying Gronwall inequality to the above inequality, we obtain
\eqref{3.21}.

Multiplying \eqref{3.32} by $(1+t)^\frac{9}{8}$, we have
\begin{equation}\label{3.34}\begin{split}&\frac{d}{dt}(1+t)^\frac{9}{8}H(t)+C_1(1+t)^\frac{9}{8}
\left(H(t)+\|u^\pm_{tt}\|_{L^2}^2\right)\\
&\le
C\left((1+t)^{\frac{9}{8}}(1+t)^{-\frac{5}{2}}K^2_0+(1+t)^{\frac{1}{8}}H(t)\right)\\
&\le
C\left((1+t)^{\frac{9}{8}}(1+t)^{-\frac{5}{2}}K^2_0+(1+t)^{\frac{1}{8}}(1+t)^{-\frac{5}{2}}K^2_0\right)\\
&\le C(1+t)^{-\frac{11}{8}}K^2_0.
\end{split}\end{equation}
Integrating \eqref{3.34} from $0$ to $t$, we obtain \eqref{3.23}. The proof of Theorem \ref{2mainth} is completed.
\end{proof}

Now, we are in a position to establish the a priori estimate for $\left(n^+,n^-\right)$.

\begin{Lemma}\label{Lemma3.5}Assume that the notations and hypotheses  of Theorem \ref{1mainth} and \eqref{3.1}  are in force, then
\begin{equation}\begin{split}\|(n^+,n^-)\|_{H^3}\le CK_0.\end{split}\label{3.35}
\end{equation}
\end{Lemma}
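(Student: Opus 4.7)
The estimate splits naturally into two pieces: the sub-leading bound on $\|n^\pm\|_{H^2}$, derivable directly from the continuity equations, and the top-order bound on $\|\nabla^3 n^\pm\|_{L^2}$, for which the novel ingredients sketched in the introduction come into play. For the sub-leading bound, I would rewrite $\eqref{2.1}_1$ in non-conservative form as $\partial_t n^\pm + u^\pm\cdot\nabla n^\pm = -(1+n^\pm)\text{div}\,u^\pm$ (the analogous identity for $n^-$ following from the fundamental relation $R^+/\rho^+ + R^-/\rho^- = 1$). Applying $\nabla^k$ for $k=0,1,2$, testing against $\nabla^k n^\pm$, and using Moser-type commutator estimates together with the a priori smallness \eqref{3.1} yields
\[
\frac{d}{dt}\|n^\pm\|_{H^2}^2 \lesssim \|\nabla u^\pm\|_{H^2}\|n^\pm\|_{H^2}^2 + \|\nabla u^\pm\|_{H^2}\|n^\pm\|_{H^2}.
\]
Since Theorem \ref{2mainth} provides the time-integrable decay $\|\nabla u^\pm(t)\|_{H^2} \lesssim K_0(1+t)^{-5/4}$, Gronwall's inequality closes the $H^2$ bound with constant of size $CK_0$.

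For the top-order piece, the direct approach of differentiating $\eqref{2.1}_1$ three times and integrating in time would require $\int_0^\infty\|\nabla^4 u^\pm\|_{L^2}\,d\tau<\infty$, which is not available from Theorem \ref{2mainth}. Instead, I would work with the momentum equations: apply $\nabla^3$ to a semi-linearized form of $\eqref{1.15}_{3,4}$ (the reformulation referenced as \eqref{3.36} in the strategy discussion), test against $\nabla^3 u^\pm$, and integrate by parts on the pressure-gradient term. After expressing $\nabla P$ through $\nabla R^\pm$, the troublesome coupling reads
\[
\mathcal R_3 = \int_{\mathbb R^3}\nabla^3\bigl[\mathcal C(\rho^-\nabla n^+ + \rho^+\nabla n^-)\bigr]\cdot\nabla^3 u^\pm\,\mathrm{d}x = \mathcal R_{3,1}+\mathcal R_{3,2}+\hbox{l.o.t.},
\]
with self-coupling $\mathcal R_{3,1}=-\int\mathcal C\rho^-\nabla^3 n^+\,\nabla^3\text{div}\,u^\pm\,\mathrm{d}x$ and cross-coupling $\mathcal R_{3,2}=-\int\mathcal C\rho^+\nabla^3 n^-\,\nabla^3\text{div}\,u^\pm\,\mathrm{d}x$. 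The self-coupling is absorbed by substituting $\nabla^3\text{div}\,u^+ = -\partial_t\nabla^3 n^+ - \nabla^3\text{div}(n^+u^+)$, which turns $\mathcal R_{3,1}$ (for the $+$ equation) into $-\tfrac12\tfrac{d}{dt}\int\mathcal C\rho^-|\nabla^3 n^+|^2\,\mathrm{d}x$ plus terms of lower order, generating the coercive control of $\|\nabla^3 n^+\|_{L^2}^2$; the structurally identical trick handles the symmetric piece in the $-$ equation.

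The main obstacle is controlling $\mathcal R_{3,2}$: the factor $n^\mp$ has no time decay, and a naive bound would cost $\|\nabla^4 u^\pm\|_{L^2}$, which is not $L^1$ in time. The key observation breaking the deadlock is to introduce $v=(2\mu^++\lambda^+)u^+-(2\mu^-+\lambda^-)u^-$; because the two pressures coincide, a suitable linear combination of the momentum equations eliminates the pressure gradient and yields the elliptic identity $-\Delta\text{div}\,v=-\text{div}(\rho^+u^+_t-\rho^-u^-_t)+\text{div}(G_1-G_2)$. Standard elliptic regularity gives $\|\nabla^3\text{div}\,v\|_{L^2}\lesssim\|(u^\pm_{tt},u^\pm_t,\nabla u^\pm_t)\|_{L^2}+\|\nabla u^\pm\|_{H^2}$, and combining the pointwise decay in \eqref{3.22} with the weighted bound \eqref{3.23} (Cauchy--Schwarz is applied on the $u^\pm_{tt}$ piece against the integrable weight $(1+\tau)^{-9/8}$) produces the crucial time-integrability
\[
\int_0^t\|\nabla^3\text{div}\,v(\tau)\|_{L^2}\,\mathrm{d}\tau \le CK_0.
\]
Finally, the algebraic identity $\text{div}\,u^\pm = \frac{(2\mu^\mp+\lambda^\mp)\text{div}\,u^\mp\pm\text{div}\,v}{2\mu^\pm+\lambda^\pm}$ splits $\mathcal R_{3,2}$ into a second self-coupling piece (handled via the mass equation for $n^-$ exactly as $\mathcal R_{3,1}$ was handled) plus a remainder proportional to $\int\mathcal C\rho^+\nabla^3 n^-\,\nabla^3\text{div}\,v\,\mathrm{d}x$, whose time integral is controlled by $CK_0\sup_{[0,t]}\|\nabla^3 n^-\|_{L^2}$. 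Summing the resulting energy identities for $(\nabla^3 n^\pm,\nabla^3 u^\pm)$ and invoking Gronwall's inequality then yields $\|\nabla^3 n^\pm\|_{L^2}\le CK_0$, which combined with the $H^2$ bound completes \eqref{3.35}.
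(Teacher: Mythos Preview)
Your proposal is correct and follows essentially the same approach as the paper's proof: the $H^2$ bound via the transport equations and the time-integrable decay of $\|\nabla u^\pm\|_{H^2}$, and the top-order bound via testing $\nabla^3$ of the semi-linearized momentum equations \eqref{3.36} against $\nabla^3 u^\pm$, absorbing the self-coupling through the mass equation as a time derivative, and handling the cross-coupling by the substitution involving $v=(2\mu^++\lambda^+)u^+-(2\mu^-+\lambda^-)u^-$ together with the elliptic estimate \eqref{3.41} and the time-integrability from \eqref{3.22}--\eqref{3.23}. The only cosmetic difference is that the paper uses the full nonlinear relation $\operatorname{div}u^\pm=-\frac{n^\pm_t+u^\pm\cdot\nabla n^\pm}{1+n^\pm}$ (yielding the weighted energy $\int\frac{\mathcal C\rho^\mp}{1+n^\pm}|\nabla^3 n^\pm|^2$) and integrates directly using the a~priori bound $\|\nabla^3 n^\pm\|_{L^2}\le\delta$ rather than phrasing the closure through Gronwall, but this does not affect the substance of the argument.
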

\begin{proof}As already stated, we cannot work on the linearize system \eqref{1.25}.
Instead, we introduce the following new semi--linearized system of
model \eqref{1.5}:
\begin{equation}\label{3.36}
\left\{\begin{array}{l}
\partial_{t} n^++(1+n^+)\operatorname{div}u^++u^+\cdot\nabla n^+=0, \\
\rho^+\partial_{t}u^{+}+\mathcal{C}\left(\rho^{-} \nabla
n^{+}+\rho^{+} \nabla n^{-}\right)-\mu^+\Delta u^+-(\mu^++\lambda^+)\nabla\operatorname{div} u^+=G_1, \\
\partial_{t} n^-+(1+n^-)\operatorname{div}u^-+u^-\cdot\nabla n^-=0, \\
\rho^-\partial_{t}u^{-}+\mathcal{C}\left(\rho^{-} \nabla
n^{+}+\rho^{+} \nabla n^{-}\right)-\mu^-\Delta u^--(\mu^-+\lambda^-)\nabla\operatorname{div} u^-=G_2, \\
\end{array}\right.
\end{equation}
where $$G_1=-\rho^+ u^+\cdot\nabla u^++\frac{\mu^+\left(\nabla u^{+}+\nabla^{t} u^{+}\right)\nabla\alpha^+}{\alpha^+}+\frac{\lambda^+\operatorname{div} u^+\nabla\alpha^+}{\alpha^+}$$
and
$$G_2=-\rho^- u^-\cdot\nabla u^-+\frac{\mu^-\left(\nabla u^{-}+\nabla^{t} u^{-}\right)\nabla\alpha^-}{\alpha^-}+\frac{\lambda^+\operatorname{div} u^-\nabla\alpha^-}{\alpha^-}.$$

For $0\le \ell\le 2$, multiplying $\nabla^{\ell}\eqref{3.36}_{1}$ and $\nabla^{\ell}\eqref{3.36}_{3}$  by $\nabla^{\ell}n^+$ and $\nabla^{\ell}n^-$, and then integrating over $\mathbb R^3,$ we obtain
\begin{equation}\label{3.37}\begin{split}&\frac{1}{2}\frac{d}{dt}\int_{\mathbb{R}^3}|\nabla^\ell n^\pm|^2\mathrm{d}x
=-\int_{\mathbb{R}^3}\nabla^\ell\left(\text{div}u^\pm+n^\pm\operatorname{div}u^\pm+u^\pm\cdot\nabla
n^\pm\right)\nabla^\ell n^\pm \mathrm{d}x.\end{split}
\end{equation}
Integrating by parts, we see that
\begin{equation}\nonumber\int_{\mathbb{R}^3} u^\pm\cdot\nabla\nabla^\ell n^\pm\nabla^\ell n^\pm
\mathrm{d}x=-\frac{1}{2}\int_{\mathbb{R}^3}
\text{div}u^\pm|\nabla^\ell n^\pm|^2\mathrm{d}x.
\end{equation}
Then using Lemmas \ref{1interpolation}--\ref{es-product} and
H\"older inequality, we can deal with \eqref{3.37} as follows
\begin{equation}\begin{split}&\frac{d}{dt}\int_{\mathbb{R}^3}|\nabla^\ell n^\pm|^2\mathrm{d}x\le C\|\nabla u^\pm\|_{H^2}\|n^\pm\|_{H^2}.\nonumber\end{split}
\end{equation}
Therefore, we have
\begin{equation}\label{3.38}\begin{split}&\frac{d}{dt}\|n^\pm\|_{H^2}\le C\|\nabla u^\pm\|_{H^2}.\end{split}
\end{equation}
In virtue of Theorem \ref{2mainth}, we see that
$\displaystyle\int_0^t\|\nabla u^\pm(\tau)\|_{H^2}\mathrm{d}\tau\le
CK_0$. Then integrating the above inequality from $0$ to $t$, we see
that $\|n^\pm\|_{H^2}\le CK_0$.\par However, since we don't know
whether the $L^1(0,t)$--norm of $\|\nabla^{4}u^\pm\|$ is uniformly
bounded or not, it seems impossible to derive the uniform estimates
on $\|\nabla^3 n^{\pm}\|_{L^2}$ from \eqref{1.27}. Therefore, we
must pursue another route by resorting to the momentum equations in
\eqref{3.36}. To begin with, by applying the classic $L^p$--estimate
of elliptic system to \eqref{3.20}, we have
\begin{equation}\label{3.39}\begin{split}\|\nabla^2 u^\pm_t\|_{L^2}&\lesssim
\|(u^\pm_{tt},\nabla\theta_t)\|_{L^2}+\|(\rho^\pm_t,\nabla u^\pm)\|_{L^\infty}\|(u^\pm_t,\nabla n^\pm_t)\|_{L^2}+\|( u^\pm,\nabla n^\pm)\|_{L^\infty}\|\nabla u^\pm_t\|_{L^2}\\
&\quad +\|\rho^\pm_t\|_{L^\infty}\|u^\pm\|_{L^\infty}\|\nabla
u^\pm\|_{L^2} +\|(n^\pm_t,\theta_t)\|_{L^2}\|\nabla
u^\pm\|_{L^\infty}\|(\nabla n^\pm,\nabla \theta)\|_{L^\infty}\\
&\lesssim \|(u^\pm_{tt},u^\pm_t,\nabla u^\pm_t)\|_{L^2}+\|\nabla
u^\pm\|_{H^1}.\end{split}
\end{equation}
Summing up  $\text{div}\eqref{3.36}_2-\text{div}\eqref{3.36}_4$, we obtain
\begin{equation}\label{3.40}\begin{split}\text{div}(\rho^+u^+_t-\rho^-u^-_t)-\Delta[(2\mu^++\lambda^+)\text{div}u^+-(2\mu^-+\lambda^-)\text{div}u^-]=\text{div}(G_1-G_2).\end{split}
\end{equation}
Denote the linear combination of the two velocities by
$v=(2\mu^++\lambda^+)u^+-(2\mu^-+\lambda^-)u^-$. Applying standard
$L^p$--estimate of elliptic system to \eqref{3.40}, we can obtain
the following key estimate of $v$
\begin{equation}\label{3.41}\begin{split}\|\nabla^3\text{div} v\|_{L^2}\lesssim&~\|\nabla^2(\rho^\pm u^\pm_{t},G_1-G_2)\|_{L^2}\\
\lesssim&~\|\rho^\pm\|_{W^{1,\infty}} \|\nabla
u^\pm_{t}\|_{H^1}+\|\nabla^2\rho^\pm\|_{L^2}
\|u^\pm_{t}\|_{L^6}+\|\nabla^2(\nabla u^\pm  \nabla
\theta)\|_{L^2}\\&+\|\nabla u^\pm\|_{L^\infty}\|(u^\pm,\nabla
n^\pm)\|_{H^2}+\|(u^\pm,\nabla n^\pm)\|_{L^\infty}\|\nabla
u^\pm\|_{H^2}\\ \lesssim&~ \|\nabla u^\pm_{t}\|_{H^1}+\|\nabla
u^\pm\|_{H^2}.\end{split}
\end{equation}
\noindent Multiplying $\nabla^{3}\eqref{3.36}_{2}$ and
$\nabla^{3}\eqref{3.36}_{4}$  by $\nabla^{3}u^+$ and
$\nabla^{3}u^-$, and then integrating over $\mathbb R^3$, we obtain
\begin{equation}\label{3.42}\begin{split}&\frac{1}{2}\frac{d}{dt}\int_{\mathbb{R}^3}\rho^\pm|\nabla^3 u^\pm|^2\mathrm{d}x+\mu^\pm
\int_{\mathbb{R}^3}|\nabla^4u^\pm|^2\mathrm{d}x+(\mu^\pm+\nu^\pm)\int_{\mathbb{R}^3}|\nabla^3\text{div}u^\pm|^2\mathrm{d}x\\
=&~\frac{1}{2}\int_{\mathbb{R}^3}\rho^\pm_t|\nabla^3
u^\pm|^2\mathrm{d}x-\sum\limits_{m=0}^2\int_{\mathbb{R}^3}\nabla^{3-m}\rho^\pm\nabla^mu^\pm_t\nabla^3
u^\pm
\mathrm{d}x\\
&\quad-\int_{\mathbb{R}^3}\nabla^3\left[\mathcal{C}\left(\rho^{-}
\nabla n^{+}+\rho^{+} \nabla n^{-}\right)\right]\nabla^3 u^+
\mathrm{d}x\\&-\int_{\mathbb{R}^3}\nabla^3\left[\mathcal{C}\left(\rho^{-}
\nabla n^{+}+\rho^{+} \nabla n^{-}\right)\right]\nabla^3 u^-
\mathrm{d}x+\int_{\mathbb{R}^3}\nabla^3G_1\nabla^3 u^+
\mathrm{d}x+\int_{\mathbb{R}^3}\nabla^3G_2\nabla^3 u^-
\mathrm{d}x\\=&~\sum\limits_{i=1}^6I_i.\end{split}
\end{equation}
From \eqref{2.1}$_1$, we have
\begin{equation}\label{3.43}|I_1|\lesssim \|\rho^\pm_t\|_{L^\infty}\|\nabla^3u^\pm\|_{L^2}^2\lesssim \|u^\pm\|_{W{1,\infty}}\|\nabla^3u^\pm\|^2_{L^2}\lesssim \delta\|\nabla^3u^\pm\|_{L^2}^2
\end{equation}
and from \eqref{3.39}, we also have
\begin{equation}\label{3.44}\begin{split}|I_2|\lesssim &\Big(\|\nabla\rho^\pm\|_{L^\infty}\|\nabla^2u^\pm_{t}\|_{L^2}
+\|\nabla^2\rho^\pm\|_{L^3}\|\nabla
u^\pm_{t}\|_{L^6}+\|\nabla^3\rho^\pm\|_{L^2}\|
u^\pm_{t}\|_{L^\infty}\Big)\|\nabla^3u^\pm\|_{L^2}\\ \lesssim
&\delta\Big(\|(u^\pm_{tt},u^\pm_t,\nabla u^\pm_t)\|^2_{L^2}+\|\nabla
u^\pm\|_{H^2}\Big)
\end{split}\end{equation}
By virtue of $\eqref{3.36}_{1}$, $\eqref{3.36}_{3}$, \eqref{3.39}
and \eqref{3.41}, we get
\begin{align}I_3=&-\int_{\mathbb{R}^3}\nabla^3\left[\mathcal{C}\left(\rho^{-} \nabla
n^{+}+\rho^{+} \nabla n^{-}\right)\right]\nabla^3
u^+\mathrm{d}x\nonumber\\=&-\int_{\mathbb{R}^3}\left[\mathcal{C}\left(\rho^{-}
\nabla\nabla^3 n^{+}+\rho^{+} \nabla
\nabla^3n^{-}\right)\right]\nabla^3 u^+
\mathrm{d}x-\sum\limits_{m=0}^2\int_{\mathbb{R}^3}\nabla^{3-m}(\mathcal{C}\rho^{-})
\nabla\nabla^m
n^{+}\nabla^3u^+\mathrm{d}x\nonumber\\&-\sum\limits_{m=0}^2\int_{\mathbb{R}^3}\nabla^{3-m}(\mathcal{C}\rho^{+})
\nabla\nabla^m n^{-}\nabla^3u^+\mathrm{d}x\nonumber\\ =
&\int_{\mathbb{R}^3}\mathcal{C}\rho^{-} \nabla^3 n^{+}\nabla^3
\text{div}u^+\mathrm{d}x+\int_{\mathbb{R}^3}\mathcal{C}\rho^{+}
\nabla^3 n^{-}\nabla^3
\text{div}u^+\mathrm{d}x\nonumber\\
&+\int_{\mathbb{R}^3}\left[\nabla(\mathcal{C}\rho^{-}) \nabla^3
n^{+}+\nabla(\mathcal{C}\rho^{+}) \nabla^3n^{-}\right]\nabla^3 u^+
\mathrm{d}x\nonumber\\&-\sum\limits_{m=0}^2\int_{\mathbb{R}^3}t\nabla^{3-m}(\mathcal{C}\rho^{-})
\nabla\nabla^m
n^{+}\nabla^3u^+\mathrm{d}x-\sum\limits_{m=0}^2\int_{\mathbb{R}^3}\nabla^{3-m}(\mathcal{C}\rho^{+})
\nabla\nabla^m n^{-}\nabla^3u^+\mathrm{d}x\nonumber\\ =
&-\int_{\mathbb{R}^3}\mathcal{C}\rho^{-} \nabla^3 n^{+}\nabla^3
\left(\frac{n^+_t+u^+\cdot\nabla
n^+}{1+n^+}\right)\mathrm{d}x\nonumber\\
&+\int_{\mathbb{R}^3}\mathcal{C}\rho^{+} \nabla^3 n^{-}\nabla^3
\left(\frac{\text{div}v+(2\mu^-+\lambda^-)\text{div}u^-}{2\mu^++\lambda^+}\right)\mathrm{d}x\nonumber\\
&+\int_{\mathbb{R}^3}\left[\nabla(\mathcal{C}\rho^{-}) \nabla^3
n^{+}+\nabla(\mathcal{C}\rho^{+}) \nabla^3n^{-}\right]\nabla^3 u^+
\mathrm{d}x-\sum\limits_{m=0}^2\int_{\mathbb{R}^3}\nabla^{3-m}(\mathcal{C}\rho^{-})
\nabla\nabla^m
n^{+}\nabla^3u^+\mathrm{d}x\nonumber\\&-\sum\limits_{m=0}^2\int_{\mathbb{R}^3}\nabla^{3-m}(\mathcal{C}\rho^{+})
\nabla\nabla^m n^{-}\nabla^3u^+\mathrm{d}x\nonumber\\ =
&-\frac{1}{2}\frac{d}{dt}\int_{\mathbb{R}^3}
\frac{\mathcal{C}\rho^{-} |\nabla^3
n^{+}|^2}{1+n^+}\mathrm{d}x+\frac{1}{2}\int_{\mathbb{R}^3} |\nabla^3
n^{+}|^2\left(\frac{\mathcal{C}\rho^{-}
}{1+n^+}\right)_tdx\nonumber\\
&-\sum\limits_{m=0}^2\int_{\mathbb{R}^3}\mathcal{C}\rho^{-} \nabla^3
n^{+}\nabla^mn^+_t\nabla^{3-m}
\left(\frac{1}{1+n^+}\right)\mathrm{d}x\nonumber\\&+\frac{1}{2}
\int_{\mathbb{R}^3}|\nabla^3n^+|^2\text{div}\left(\frac{\mathcal{C}\rho^{-}u^+}{1+n^+}\right)\mathrm{d}x
-\sum\limits_{m=0}^2\int_{\mathbb{R}^3}\mathcal{C}\rho^{-}\nabla^3n^+\nabla^{3-m}\left(\frac{u^+}{1+n^+}\right)\cdot\nabla\nabla^m
n^+\mathrm{d}x
\nonumber\\&+\frac{2\mu^-+\lambda^-}{2\mu^++\lambda^+}\Big(-\frac{1}{2}\frac{d}{dt}\int_{\mathbb{R}^3}
\frac{\mathcal{C}\rho^{+} |\nabla^3
n^{-}|^2}{1+n^-}\mathrm{d}x+\frac{1}{2}\int_{\mathbb{R}^3} |\nabla^3
n^{-}|^2\left(\frac{\mathcal{C}\rho^{+} }{1+n^-}\right)_tdx
\nonumber\\&-\sum\limits_{m=0}^2\int_{\mathbb{R}^3}\mathcal{C}\rho^{+}
\nabla^3 n^{-}\nabla^mn^-_t\nabla^{3-m}
\left(\frac{1}{1+n^-}\right)\mathrm{d}x+\frac{1}{2}\int_{\mathbb{R}^3}
|\nabla^3n^-|^2\text{div}\left(\frac{\mathcal{C}\rho^{+}u^-}{1+n^-}\right)\mathrm{d}x
\nonumber\\&-\sum\limits_{m=0}^2\int_{\mathbb{R}^3}\mathcal{C}\rho^{+}\nabla^3n^-\nabla^{3-m}\left(\frac{u^-}{1+n^-}\right)
\cdot\nabla\nabla^m n^-\mathrm{d}x\Big)
+\frac{1}{2\mu^++\lambda^+}\int_{\mathbb{R}^3}\mathcal{C}\rho^{+}\nabla^3n^-\nabla^3\text{div}v\mathrm{d}x
\nonumber\\&+\int_{\mathbb{R}^3}\left[\nabla(\mathcal{C}\rho^{-})
\nabla^3 n^{+}+\nabla(\mathcal{C}\rho^{+})
\nabla^3n^{-}\right]\nabla^3 u^+
\mathrm{d}x-\sum\limits_{m=0}^2\int_{\mathbb{R}^3}\nabla^{3-m}(\mathcal{C}\rho^{-})
\nabla\nabla^m
n^{+}\nabla^3u^+\mathrm{d}x\nonumber\\&-\sum\limits_{m=0}^2\int_{\mathbb{R}^3}\nabla^{3-m}(\mathcal{C}\rho^{+})
\nabla\nabla^m n^{-}\nabla^3u^+\mathrm{d}x \nonumber\\ \le
&-\frac{1}{2}\frac{d}{dt}\int_{\mathbb{R}^3}
\frac{\mathcal{C}\rho^{-} |\nabla^3
n^{+}|^2}{1+n^+}\mathrm{d}x-\frac{2\mu^-+\lambda^-}{2(2\mu^++\lambda^+)}\frac{d}{dt}\int_{\mathbb{R}^3}
\frac{\mathcal{C}\rho^{+} |\nabla^3
n^{-}|^2}{1+n^-}\mathrm{d}x\nonumber\\
&+C\delta\Big(\|(u^\pm_{tt},u^\pm_t,\nabla u^\pm_t)\|_{L^2}+\|\nabla
u^\pm\|_{H^2}\Big).\label{3.45}
\end{align}
Employing the similar argument as in \eqref{3.45}, we obtain
\begin{equation}\begin{split}\label{3.46}I_4&\le -\frac{1}{2}\frac{d}{dt}\int_{\mathbb{R}^3} \frac{\mathcal{C}\rho^{+} |\nabla^3
n^{-}|^2}{1+n^-}\mathrm{d}x-\frac{2\mu^++\lambda^+}{2(2\mu^-+\lambda^-)}\frac{d}{dt}\int_{\mathbb{R}^3}
\frac{\mathcal{C}\rho^{-} |\nabla^3
n^{+}|^2}{1+n^+}\mathrm{d}x\\
&\quad+C\delta\Big(\|(u^\pm_{tt},u^\pm_t,\nabla
u^\pm_t)\|_{L^2}+\|\nabla u^\pm\|_{H^2}\Big)
\end{split}\end{equation} By applying integration by parts, we have
\begin{equation}\begin{split}\label{3.47}I_5&=\displaystyle-\int_{\mathbb{R}^3}\nabla^2G_1\nabla^4u^+\mathrm{d}x\\
&\le C\|\nabla^2G_1\|_{L^2}\|\nabla^4u^+\|_{L^2}\\& \le
C\Big(\|(u^\pm,\nabla n^\pm)\|_{H^2}\|\nabla
u^\pm\|_{L^\infty}+\|(u^\pm,\nabla n^\pm)\|_{L^\infty}\|\nabla
u^\pm\|_{H^2}\Big)\|\nabla^4u^+\|_{L^2}\\& \le C\delta\Big(\|\nabla
u^\pm\|_{H^2}^2+\|\nabla^4u^+\|_{L^2}^2\Big).
\end{split}\end{equation}
Similarly, for $I_6$, we have
\begin{equation}\begin{split}\label{3.48}I_6 \le&~ C\delta(\|\nabla u^\pm\|_{H^2}^2+\|\nabla^4u^-\|_{L^2}^2).
\end{split}\end{equation}
Substituting \eqref{3.43}--\eqref{3.48} into \eqref{3.42} yields
\begin{equation}\label{3.49}\begin{split}&\frac{d}{dt}\int_{\mathbb{R}^3}\rho^\pm|\nabla^3 u^\pm|^2+\frac{\mathcal{C}\rho^{-} |\nabla^3
n^{+}|^2}{1+n^+}+\frac{2\mu^-+\lambda^-}{2\mu^++\lambda^+}\frac{\mathcal{C}\rho^{+}
|\nabla^3 n^{-}|^2}{1+n^-}\\
&\quad+\frac{\mathcal{C}\rho^{+} |\nabla^3
n^{-}|^2}{1+n^-}+\frac{2\mu^++\lambda^+}{2\mu^-+\lambda^-}\frac{\mathcal{C}\rho^{-}
|\nabla^3
n^{+}|^2}{1+n^+}\mathrm{d}x\\&\quad+\mu^\pm\int_{\mathbb{R}^3}|\nabla^4u^\pm|^2\mathrm{d}x+
(\mu^\pm+\nu^\pm)\int_{\mathbb{R}^3}|\nabla^3\text{div}u^\pm|^2\mathrm{d}x\\
&\le C\delta\Big(\|(u^\pm_{tt},u^\pm_t,\nabla
u^\pm_t)\|_{L^2}+\|\nabla u^\pm\|_{H^2}\Big).\end{split}
\end{equation}
In virtue of  \eqref{3.22} and \eqref{3.23}, for any $t\ge 0$, it
holds that
$$\int_0^t\left(\|(u^\pm_{tt},u^\pm_t,\nabla u^\pm_t)(\tau)\|_{L^2}+\|\nabla u^\pm(\tau)\|_{H^2}\right)\mathrm{d}\tau \le CK_0.$$
Consequently, integrating \eqref{3.49} from $0$ to $t$, we finally
deduce that
\begin{equation}\label{3.50}\|\nabla^3n^\pm\|_{L^2}\le CK_0.\end{equation}
\par
\noindent Therefore, we complete the proof of Lemma \ref{Lemma3.5}.
\end{proof}

\bigskip

\noindent{\bf Proof of Theorem \ref{1mainth}. } Using Theorem \ref{2mainth} and Lemma \ref{Lemma3.5}, we can obtain Theorem \ref{1mainth} immediately.

\bigskip
\appendix

\section{Analytic tools}\label{1section_appendix}
We recall the Sobolev interpolation of the Gagliardo--Nirenberg inequality.
 \begin{Lemma}\label{1interpolation}
 Let $0\le i, j\le k$, then we have
\begin{equation}\nonumber
\norm{\nabla^i f}_{L^p}\lesssim \norm{  \nabla^jf}_{L^q}^{1-a}\norm{ \nabla^k f}_{L^r}^a
\end{equation}
where $a$ belongs to $[\frac{i}{k},1]$ and satisfies
\begin{equation}\nonumber
\frac{i}{3}-\frac{1}{p}=\left(\frac{j}{3}-\frac{1}{q}\right)(1-a)+\left(\frac{k}{3}-\frac{1}{r}\right)a.
\end{equation}
Particularly, when $p=q=r=2$, we have
\begin{equation}\nonumber
 \norm{\nabla^if}_{L^2}\lesssim \norm{\nabla^jf}_{L^2}^\frac{k-i}{k-j}\norm{\nabla^kf}_{L^2}^\frac{i-j}{k-j}.
\end{equation}
\begin{proof}
This is a special case of  \cite[pp. 125, THEOREM]{Nirenberg}.
\end{proof}
\end{Lemma}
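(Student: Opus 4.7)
The plan is to prove the inequality by combining a scaling argument (to pin down the algebraic constraint on $a$) with a Littlewood--Paley frequency decomposition (to produce the quantitative bound). First I would verify that the exponent relation is forced by dimensional analysis: applying the putative inequality to the rescaled function $f_\lambda(x):=f(\lambda x)$ and matching powers of $\lambda$ on both sides gives exactly the stated identity $\frac{i}{3}-\frac{1}{p}=\bigl(\frac{j}{3}-\frac{1}{q}\bigr)(1-a)+\bigl(\frac{k}{3}-\frac{1}{r}\bigr)a$. The admissible interval $a\in[i/k,1]$ then corresponds to the regime in which the interpolation actually proceeds ``towards higher regularity''; values of $a$ outside this interval would require $\nabla^j f$ to control more derivatives than it carries.

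Second I would establish the bound itself via Littlewood--Paley decomposition $f=\sum_{\ell\in\mathbb{Z}}\Delta_\ell f$, where $\Delta_\ell$ is the standard dyadic frequency localizer. Bernstein's inequality gives $\|\nabla^i\Delta_\ell f\|_{L^p}\lesssim 2^{\ell i}\|\Delta_\ell f\|_{L^p}$, together with the frequency-localized embeddings $\|\Delta_\ell g\|_{L^p}\lesssim 2^{3\ell(\frac{1}{s}-\frac{1}{p})}\|\Delta_\ell g\|_{L^s}$ for $s\le p$. Splitting the sum $\sum_\ell$ at a threshold $\ell_0$ into a low-frequency piece controlled by $\|\nabla^j f\|_{L^q}$ and a high-frequency piece controlled by $\|\nabla^k f\|_{L^r}$, each subsum is a geometric series whose ratio (by the scaling identity) is $2^\ell$ raised to a power of fixed sign on each side of $\ell_0$; hence each subsum is dominated by its endpoint term. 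Choosing $\ell_0$ so that the two endpoint bounds balance yields $\|\nabla^i f\|_{L^p}\lesssim\|\nabla^j f\|_{L^q}^{1-a}\|\nabla^k f\|_{L^r}^{a}$ with exactly the advertised exponent $a$.

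An alternative classical route, which is the one carried out in Nirenberg's original paper, would be to first prove the base case $j=0$, $i=1$, $p=q=r=2$ (which is a one-line integration by parts after $\nabla f\cdot\nabla f=-f\,\Delta f$), extend it to $L^p$ by H\"older and Calder\'on--Zygmund estimates, then obtain general $i$ and $k$ by iteration, and finally recover non-zero $j$ by replacing $f$ with $\nabla^j f$ and reusing the same chain. Either route delivers the result in the stated range of exponents.

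The main obstacle is the treatment of endpoint exponents: cases such as $p=\infty$, or $q=\infty$, or $a=i/k$ at the boundary of its allowed interval, cannot be covered by Bernstein--type estimates alone and typically require auxiliary tools (BMO, Hardy spaces, or an explicit truncation argument). In the present paper, however, the only invocations of this lemma occur with $p,q,r\in[2,6]$ and with $a$ strictly inside its interval, so these endpoint subtleties never arise, and for the record one may simply cite \cite{Nirenberg} as the authors do.
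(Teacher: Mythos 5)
The paper's ``proof'' of this lemma is a one--line citation to Nirenberg's 1959 article, so there is no internal argument to compare against; your proposal is a genuinely different, self--contained route. The scaling check is standard and correctly identifies the exponent relation. The Littlewood--Paley sketch is also sound in outline: writing $\alpha=i-j+3(1/q-1/p)$ and $\beta=i-k+3(1/r-1/p)$, Bernstein gives $\|\nabla^i\Delta_\ell f\|_{L^p}\lesssim\min\{2^{\ell\alpha}\|\nabla^j f\|_{L^q},\,2^{\ell\beta}\|\nabla^k f\|_{L^r}\}$, the split at $\ell_0$ sums geometrically on each side precisely when $\alpha>0>\beta$ (i.e. $0<a<1$), and balancing the two endpoint terms yields $a=\alpha/(\alpha-\beta)$, which after rearrangement is exactly the stated relation. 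If you carried this out in full you would need the homogeneous decomposition (taken modulo polynomials) so that the dyadic sum actually represents $f$, and you should be clear that the range $a\in[i/k,1]$ is part of the hypothesis in Nirenberg's theorem rather than something the dyadic argument itself produces; your sketch really delivers the open range $a\in(0,1)$. What each route buys: the citation is the economical choice (and suffices here, since the lemma is invoked only with $p,q,r\in[2,6]$ and $a$ strictly interior); your Littlewood--Paley argument is more transparent and modular, and it makes the origin of the exponent relation visible, at the cost of importing the Bernstein machinery and, as you correctly flag, extra care at the endpoint exponents. The alternative classical chain you also outline (base case by integration by parts, $L^p$ via Calder\'on--Zygmund, then iteration in $i,j,k$) is in substance the argument of \cite{Nirenberg} that the paper is pointing to.
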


\begin{Lemma}[\cite{Kenig}]\label{es-product}
For any integer $k\ge1$, we have
 \begin{equation}\nonumber
  \norm{\nabla ^k(fg)}_{L^p} \lesssim \norm{f}_{L^{p_1}}\norm{\nabla ^kg}_{L^{p_2}} +\norm{\nabla ^kf}_{L^{p_3}}\norm{g}_{L^{p_4}},
 \end{equation}
and
 \begin{equation}\nonumber
  \norm{\nabla ^k(fg)-f\nabla^kg}_{L^p} \lesssim \norm{\nabla f}_{L^{p_1}}\norm{\nabla ^{k-1}g}_{L^{p_2}} +\norm{\nabla ^kf}_{L^{p_3}}\norm{g}_{L^{p_4}},
 \end{equation}
where $p, p_1, p_{2}, p_{3}, p_{4} \in[1, \infty]$ and
$$
\frac{1}{p}=\frac{1}{p_{1}}+\frac{1}{p_{2}}=\frac{1}{p_{3}}+\frac{1}{p_{4}}.
$$
\end{Lemma}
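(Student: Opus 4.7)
The plan is to derive both estimates from the discrete Leibniz formula combined with the Gagliardo--Nirenberg interpolation recorded in Lemma \ref{1interpolation}. Since $k$ is a positive integer, one has the pointwise identity
\begin{equation*}
\nabla^k(fg)=\sum_{j=0}^{k}\binom{k}{j}\,\nabla^j f\otimes\nabla^{k-j}g,
\end{equation*}
so the first inequality reduces to bounding $\|\nabla^j f\,\nabla^{k-j}g\|_{L^p}$ for every $0\le j\le k$, while the commutator estimate reduces to the same task for $1\le j\le k$: the missing $j=0$ contribution $f\,\nabla^k g$ is precisely what has been subtracted.

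The endpoints of each sum produce, by plain H\"older, the two terms appearing on the right-hand sides. For the first inequality, $j=0$ gives $\|f\,\nabla^k g\|_{L^p}\le\|f\|_{L^{p_1}}\|\nabla^k g\|_{L^{p_2}}$ and $j=k$ gives $\|\nabla^k f\,g\|_{L^p}\le\|\nabla^k f\|_{L^{p_3}}\|g\|_{L^{p_4}}$. For the commutator estimate, $j=1$ gives $\|\nabla f\,\nabla^{k-1}g\|_{L^p}\le\|\nabla f\|_{L^{p_1}}\|\nabla^{k-1}g\|_{L^{p_2}}$ (the first right-hand term) and $j=k$ gives the second.

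The substantive work lies in the interior indices $1\le j\le k-1$ for the first inequality (and $2\le j\le k-1$ for the commutator when $k\ge 3$; when $k\le 2$ there is nothing further to do). I would introduce auxiliary exponents $q_1,q_2\in[1,\infty]$ with $1/p=1/q_1+1/q_2$ and apply H\"older to reduce to $\|\nabla^j f\|_{L^{q_1}}\|\nabla^{k-j}g\|_{L^{q_2}}$. For the first inequality set $\theta=j/k$ and choose
\begin{equation*}
\frac{1}{q_1}=\frac{1-\theta}{p_1}+\frac{\theta}{p_3},\qquad
\frac{1}{q_2}=\frac{1-\theta}{p_2}+\frac{\theta}{p_4}.
\end{equation*}
Lemma \ref{1interpolation} then furnishes
\begin{equation*}
\|\nabla^j f\|_{L^{q_1}}\lesssim\|f\|_{L^{p_1}}^{1-\theta}\|\nabla^k f\|_{L^{p_3}}^{\theta},\qquad
\|\nabla^{k-j}g\|_{L^{q_2}}\lesssim\|\nabla^k g\|_{L^{p_2}}^{1-\theta}\|g\|_{L^{p_4}}^{\theta},
\end{equation*}
so writing $A=\|f\|_{L^{p_1}}\|\nabla^k g\|_{L^{p_2}}$ and $B=\|\nabla^k f\|_{L^{p_3}}\|g\|_{L^{p_4}}$, Young's inequality $A^{1-\theta}B^{\theta}\le(1-\theta)A+\theta B\le A+B$ converts the multiplicative bound to the additive form claimed. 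The commutator estimate is handled identically with $\theta=(j-1)/(k-1)$, interpolating $\nabla^j f$ between $\nabla f$ and $\nabla^k f$ and $\nabla^{k-j}g$ between $g$ and $\nabla^{k-1}g$.

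The one compatibility point that needs verification, and which I expect to be the only real technical obstacle, is that the H\"older constraint $1/q_1+1/q_2=1/p$ coexists with the two hypotheses $1/p=1/p_1+1/p_2=1/p_3+1/p_4$. Adding the definitions of $1/q_1$ and $1/q_2$ gives $(1-\theta)(1/p_1+1/p_2)+\theta(1/p_3+1/p_4)=(1-\theta)/p+\theta/p=1/p$, which closes the argument. A parallel calculation confirms that the scaling identity required by Lemma \ref{1interpolation} is satisfied by the choice of $q_1,q_2$ above. Endpoint cases $p_i=\infty$ are handled by the standard conventions, and summing over $j$ introduces only the combinatorial factor $2^k$, absorbed by $\lesssim$.
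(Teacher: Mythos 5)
The paper does not prove Lemma~\ref{es-product}; it simply cites Kenig--Ponce--Vega, where the estimate is established in a more general fractional--derivative form (replacing $\nabla^k$ by $\Lambda^s$ with $s>0$) via Littlewood--Paley theory and Coifman--Meyer multiplier techniques. Your proof is a genuinely different and more elementary route, tailored to integer $k$: expand $\nabla^k(fg)$ by the Leibniz rule, dispose of the endpoint terms by H\"older, and control the cross terms by Gagliardo--Nirenberg interpolation followed by Young's inequality to pass from the multiplicative bound $A^{1-\theta}B^{\theta}$ to $A+B$. This is the standard proof one sees for the integer case (Moser--type estimates), and it is entirely self--contained given Lemma~\ref{1interpolation}; the exponent bookkeeping you carry out ($1/q_1+1/q_2=1/p$, the scaling identity for $q_1,q_2$) is correct. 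What the citation buys the paper is the fractional generalization and uniformity of constants; what your argument buys is transparency and avoidance of harmonic--analysis machinery, which is all that is needed here since the paper only ever applies the lemma with integer derivatives.

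One small technical point worth tidying up. For the commutator estimate you interpolate $\nabla^j f$ between $\nabla f$ and $\nabla^k f$ with exponent $a=(j-1)/(k-1)$. Taken literally, Lemma~\ref{1interpolation} permits only $a\in[i/k,1]$ with $i=j$, and one checks that $(j-1)/(k-1)<j/k$ for $1\le j\le k-1$, so the stated hypothesis is violated. This is really an artifact of the over--restrictive way Lemma~\ref{1interpolation} records the admissible range (it is written for a base order of zero); the fix is immediate: apply Lemma~\ref{1interpolation} to the function $F=\nabla f$ rather than to $f$, i.e.\ interpolate $\nabla^{j-1}F$ between $F$ and $\nabla^{k-1}F$ with $a=(j-1)/(k-1)$, which now sits exactly at the endpoint $i/k_{\mathrm{lemma}}=(j-1)/(k-1)$ of the permitted interval. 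After this relabeling your exponent identities for $q_1,q_2$ and the H\"older condition $1/q_1+1/q_2=1/p$ still hold verbatim, so the argument closes. Apart from this presentational adjustment, and the unexamined borderline cases of Nirenberg's theorem when some $p_i\in\{1,\infty\}$ (which do not arise in the paper's applications), the proof is sound.
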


\section*{Acknowledgments}
Guochun Wu's research was partially supported by National Natural
Science Foundation of China $\#$11701193, $\#$11671086, Natural
Science Foundation of Fujian Province $\#$ 2018J05005,
$\#$2017J01562 and Program for Innovative Research Team in Science
and Technology in Fujian Province University Quanzhou High-Level
Talents Support Plan $\#$2017ZT012.   L. Yao's research  was
partially supported by Natural Science Basic Research Plan for
Distinguished Young Scholars in Shaanxi Province of China (Grant No.
2019JC-26),  National Natural Science Foundation of China $\#$
11931013.   Yinghui Zhang' research is partially supported by
Guangxi Natural Science Foundation $\#$2019JJG110003, Guangxi
Science and Technology Plan Project $\#$2019AC20214, and National
Natural Science Foundation of China $\#$11771150.

\bigskip

\end{document}